\newcommand{\M}{M}
\newcommand{\p}{P}
\newcommand{\f}{f}
\DeclareMathOperator{\wt}{wt}
\newcommand{\xequiv}[1]{\ \rlap{$\equiv$}{\raisebox{.55em}{\tiny \ $#1$}}\ }
\newcommand{\aequiv}{\xequiv{a}}
\newcommand\numberthis{
\addtocounter{equation}{1}\tag{\theequation}
}
\newcounter{fact}
\newtheorem{theorem}[fact]{Theorem}
\newtheorem{proposition}[fact]{Proposition}
\newtheorem{lemma}[fact]{Lemma}
\newtheorem{corollary}[fact]{Corollary}
\newcommand{\alf}{{\ensuremath{\alpha}}}
\newcommand{\bet}{{\ensuremath{\beta}}}
\newcommand{\gam}{{\ensuremath{\gamma}}}
\newcommand{\del}{{\ensuremath{\delta}}}
\newcommand{\hex}[4]{
	\pgfmathtruncatemacro{\a}{#1}
	\pgfmathtruncatemacro{\b}{#2}
	\pgfmathtruncatemacro{\c}{#3}
	\pgfmathtruncatemacro{\t}{#4}
    
	\pgfmathtruncatemacro{\longdiag}{\a+\b+\c+\t+\t}
	
	\draw[clip] (\a+\t,0,\t) -- ++(0,\b+\t,0) -- ++(-\a,0,0) -- ++(0,0,\c+\t) -- ++(0,-\b,0) -- ++(\a+\t,0,0) -- cycle;

	\foreach \i in {0,...,\longdiag}{
        \draw[-] (\i,0,\c+\t) -- ++(0,\longdiag,0);
        \draw[-] (\a+\t,\i,0) -- ++(0,0,\longdiag);
        \draw[-] (0,\b+\t,\i) -- ++(\longdiag,0,0);
    }
}
\newcommand{\labelnote}[1]{\label{#1}}
\newcommand{\vdent}[3]{\draw[fill=blue!5](#1,#2,#3) --++(0,0,1) --++(0,-1,0) --++(0,0,-1)-- cycle; }
\newcommand{\ldent}[3]{\draw[fill=green!10](#1,#2,#3) --++(1,0,0) --++(0,-1,0) --++(-1,0,0)-- cycle; }
\newcommand{\rdent}[3]{\draw[fill=red!10](#1,#2,#3) --++(1,0,0) --++(0,0,1) --++(-1,0,0)-- cycle; }
\title{Simple Relationships Between Lozenge Tiling Functions of Related Regions }
\author{Daniel Condon }
\date{\today}
\begin{document}

\maketitle
\thispagestyle{empty}

\begin{abstract}
    We give a formula for the number of symmetric tilings of hexagons on the triangular lattice with unit triangles removed from arbitrary positions along two non-adjacent non-opposite sides. We show that for certain families of such regions, the ratios of their numbers of symmetric tilings are given by simple product formulas. We also prove that for certain weighted regions which arise when applying Ciucu’s Factorization Theorem, the formulas for the weighted and unweighted counts of tilings have a simple explicit relationship.
    
\end{abstract}

\section{Introduction}

The triangular lattice induces a tiling of the plane with unit equilateral triangles so that some of the lattice lines are horizontal. A \textbf{region} on the triangular lattice is a connected bounded union of these triangles.\footnote{Henceforth we will say ``triangle'' to mean a unit equilateral triangle on the triangular lattice.} We say triangles which share an edge are \textbf{adjacent}. A \textbf{lozenge} is the union of two adjacent triangles, and a \textbf{lozenge tiling}\footnote{Henceforth we will simply say ``tiling'' to mean a {lozenge tiling}.} of a region is a covering of that region with lozenges within that region so that pairwise the lozenges do not overlap except on their boundaries.

We may associate a \textbf{weight function} to a region which assigns a weight to each lozenge within that region. This induces a weight on each tiling of the region, which is the product of the weights of the lozenges in that tiling. We say a region is \textbf{unweighted} if each lozenge (and therefore each tiling) has weight 1. A region with no specified weight function is assumed to be unweighted. We consider the weight function a feature of the region.

Given a region $R$ we let $\M(R)$ denote the sum of weights of tilings of $R$; thus if $R$ is unweighted, $\M(R)$ simply counts the tilings of $R$. We use the letter $\M$ because a natural bijection exists between tilings of a region on the triangular lattice and perfect \textbf{m}atchings on a corresponding graph, namely the region's planar dual. Kasteleyn's paper \cite{Ka}, and Temperley and Fisher's paper \cite{TeFi}, are an excellent introduction to matching theory.

Given a family of regions $R_{\bar x}$ indexed by parameters $\bar x$, the function $f(\bar x):=\M(R_{\bar x})$ is called the \textbf{tiling function} of that family.
For example, let $H_{a,b,c}$ denote a region which is a semiregular hexagon, with horizontal sides of length $a$, sides immediately clockwise from these of length $b$, and sides immediately clockwise from these of length $c$. Then
\begin{equation}
  \p(a,b,c)  := \M(H_{a,b,c}) = \prod_{i=1}^a \prod_{j=1}^b \prod_{k=1}^c \frac{i+j+k-1}{i+j+k-2}
\end{equation}
and we say that $\p(a,b,c)$ is the tiling function for semiregular hexagons. This result is by MacMahon \cite{Ma} who was studying lozenge tilings from the perspective of \textbf{p}lane \textbf{p}artitions, hence our use of the letter $\p$. Figure \ref{fig:MacMahon} depicts an example of a tiling of one such hexagon.
The simple elegance of MacMahon's formula has inspired a search for other families of regions with tiling functions given by simple product formulas.

\begin{figure}
    \begin{minipage}[c]{\textwidth}
    \centering
    \begin{multicols}{2}
    \begin{tikzpicture}[x={(0:1cm)},y={(120:1cm)},z={(240:1cm)},scale=.45]
	\hex{6}{4}{5}{0}
    \end{tikzpicture}
    
    \begin{tikzpicture}[x={(0:1cm)},y={(120:1cm)},z={(240:1cm)},scale=.45]
    \draw[thick,fill=red!10] (2,0,-4) --++ (-6,0,0) --++(0,0,5)--++(0,-4,0)--++(6,0,0)--++(0,0,-5)--++(0,4,0);

	\clip (2,0,-4) --++ (-6,0,0) --++(0,0,5)--++(0,-4,0)--++(6,0,0)--++(0,0,-5)--++(0,4,0);
	
	\begin{scope}
	    \clip(2,0,-4)--++ (-6,0,0) --++(0,0,5)--++(2,0,0)--++(0,0,-1)--++(1,0,0)--++(0,0,-1)--++(3,0,0)--cycle;
	    
	    \foreach \x in {-3,-2,-1,0,1,2}{
	        \draw[-] (\x,0,1)--++(0,0,-5);
	    }
	    
	    \foreach \z in {-3,-2,-1,0,1}{
	        \draw[-](-4,0,\z) --++(6,0,0);
	    }
	\end{scope}
	
	\fill[green!10] (-4,0,1) --++(0,-4,0)--++(6,0,0) --++ (0,2,0) --++(-2,0,0)--++(0,1,0)--++(-2,0,0)--++(0,1,0)-- cycle;
	
	\begin{scope}
	 \clip (-4,0,1) --++(0,-4,0)--++(6,0,0) --++ (0,2,0) --++(-2,0,0)--++(0,1,0)--++(-2,0,0)--++(0,1,0)-- cycle;
	 
	\foreach \z in {1,2,3,4}{
	\draw[-] (-4,0,\z)--++(10,0,0);
	}
	
    \foreach \x in {1,2,3,4,5,6}{
    \draw[-] (\x,0,5)--++(0,4,0);
    }
	\end{scope}

	\fill[blue!5] (2,0,-4)--++(0,0,3)--++(0,-1,0)--++(0,0,1)--++(0,-1,0)--++(0,0,1)--++(0,-2,0)--++(0,0,-5)--cycle;
	\begin{scope}
	\clip (2,0,-4)--++(0,0,3)--++(0,-1,0)--++(0,0,1)--++(0,-1,0)--++(0,0,1)--++(0,-2,0)--++(0,0,-5)--cycle;
	
	\foreach \z in {1,2,3,4,5} {
	\draw[-] (6,0,\z)--++(0,4,0);
	}
	
	\foreach \x in {1,2,3,4,5}{
	\draw[-](\x,0,5)--++(0,0,-10);
	}
	\end{scope}
	
	\vdent010\vdent{-2}00\vdent0{-1}0
    \ldent{-2}00\ldent010\ldent110\ldent210\ldent0{-1}0\ldent1{-1}0
	\rdent000\rdent001\rdent{-1}01\rdent100\rdent200\rdent0{-2}0\rdent1{-2}0
	
	\draw[thick] (2,0,-4) --++ (-6,0,0) --++(0,0,5)--++(0,-4,0)--++(6,0,0)--++(0,0,-5)--++(0,4,0);
	
    \end{tikzpicture}
    \end{multicols}
    \caption{The region $H_{6,4,5}$ and a tiling of that region. Tilings of semiregular hexagons have a natural bijection with plane partitions, namely the tilings of $H_{a,b,c}$ depict the plane partitions within a box of dimension $a \times b \times c$. An appropriate weight function for $H_{a,b,c}$ yields a tiling function that enumerates plane partitions in the box by volume.}
    \label{fig:MacMahon}
    \end{minipage}
\end{figure}
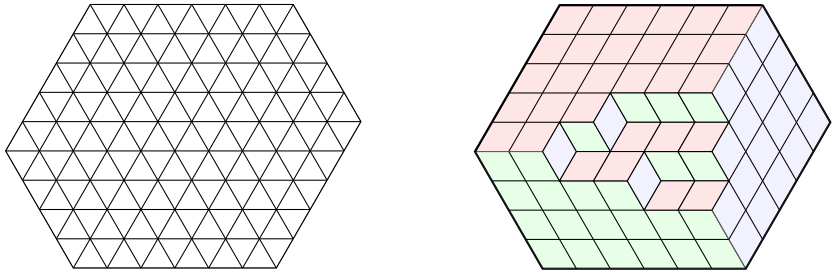

Note that since $H_{a,b,c}$ is congruent to the hexagon with the parameters $a,b,c$ given in any order, $\p(a,b,c)$ is symmetric on its parameters; nonetheless, being strict about the orientation of our figures will be helpful as they become more complex. For example, we will restrict our study of figures which are symmetric across a lattice line to just those which are symmetric across a horizontal lattice line, and we will say these are \textbf{horizontally symmetric}. Similarly, we will study figures that are symmetric across a vertical line, calling these \textbf{vertically symmetric}. When $R$ is a region with at least one of these symmetries, we will let $\M_{-}(R)$ give the weighted count of horizontally symmetric tilings of $R$, and $\M_{|}(R)$ denote the weighted count of vertically symmetric tilings of $R$. Since $H_{a,b,b}$ is both horizontally and vertically symmetric, we will denote:
\begin{align}
    \p_{-}(a,b):= \M_{-}(H_{2a,b,b}) =& \binom{2a+b}{b}\prod_{1 \leq i <j \leq b} \dfrac{2a+i+j}{i+j}\\
    \p_{|}(a,b):= \M_{|}(H_{2a,b,b}) = & \dfrac{(a+1)_{b-1}}{(2a+1)_{b-1}}\prod_{1 \leq i \leq j \leq b-1} \dfrac{2a+i+j-1}{i+j-1}.
\end{align}
The formula for $\p_{-}(a,b)$ was proven independently in various forms by Andrews \cite{An78}, Gordon \cite{Go83}, Macdonald \cite{MD}, and Proctor \cite{Pr84}. The formula for $\p_{|}(a,b)$ is due to Proctor \cite{Pr88}. Notably, $$P_{-}(a,b) \cdot P_{|}(a,b) = P(a,b,b),$$ an observation by Ciucu, who noticed the relationship after these three formulas were known. So far there is no known bijective proof of this fact.

Recently a new type of result has gained prominence; identifying simple relationships between tiling functions which themselves might not have a known simple expression. This paper will explore relationships such as these. Descriptions of the regions we will discuss follow.

We use $H_{a,b,c,t}$ to denote the equiangular hexagon(al region) with side lengths $a, b+t, c, a+t, b, c+t$ clockwise starting from the northern side. For $(u_i)_1^m, (v_j)_1^n$  integer vectors with $1 \leq u_i < u_{i+1} \leq b+t$ and $1 \leq v_i < v_{i+1} \leq c+t$, we use $H_{a,b,c,t,(u_i)_1^m, (v_j)_1^n}$ to denote the region obtained by modifying $H_{a,b,c,t}$ as follows: remove each $u_i$th unit triangle along the northeast border, counting those triangles which share an edge with the border and starting from the north; remove also each $v_j$th unit triangle along the northwest border, counting those triangles which share an edge with the border and starting from the north. We call regions of this kind \textbf{dented hexagons}, and an example of such a region is given in the top left of Figure \ref{fig:DentedHex}. We discuss these regions at length in \cite{Co}. Regions constructed by removing dents from hexagons have been studied before. In \cite{Ei} Eisenk{\"o}lbl gave a tiling function for a region constructed from $H_{a,b,c,1}$ by removing a dent from each of three alternating sides, and in \cite{Ci96} Ciucu and Fischer give a tiling function for hexagons with dents removed in arbitrary positions along the border, presented as the Pfaffian of a matrix.  In \cite{Gi} Gilmore gives a tiling function for regions with dents removed from arbitrary positions along the border and within the interior.
\begin{figure}
    \begin{minipage}[c]{\textwidth}
    \centering
    \begin{multicols}{2}

    \begin{tikzpicture}[x={(0:.45cm)},y={(120:.45cm)},z={(240:.45cm)}]
	\pgfmathtruncatemacro{\a}{4}
	\pgfmathtruncatemacro{\b}{2}
	\pgfmathtruncatemacro{\c}{3}
	\pgfmathtruncatemacro{\t}{4}
    
	\pgfmathtruncatemacro{\longdiag}{\a+\b+\c+\t+\t}

	\draw[dashed] (0,.5,-.5) --++(0,-6,6);	

	\draw[clip] (0,0,0)  --++(-2,0,0) -- ++(0,0,2)
	-- ++ (0,-1,0) -- ++(-1,0,0) -- ++ (0,0,1)
	-- ++ (0,-1,0) -- ++(-1,0,0) -- ++ (0,0,2) -- ++ (0,-3,0) --++ (4,0,0) 
	--++(4,0,0) --++(0,0,-3) --++(0,2,0) --++ (-1,0,0) --++(0,0,-1) --++ (0,1,0) 
	--++ (-1,0,0) --++(0,0,-1) --++ (0,2,0) -- cycle;

	\foreach \i in {0,...,\longdiag}{
        \draw[-] (2,-\i,0) -- ++(-\longdiag,0,0);
        \draw[-] (2-\i,0,0) -- ++(0,-\longdiag,0);
        \draw[-] (-2+\i,0,0) -- ++(0,0,\longdiag);
    }

    \end{tikzpicture}

	$H_{4,3,3,4,(3,5),(3,5)}$

    \columnbreak
        
    \begin{tikzpicture}[x={(0:.45cm)},y={(120:.45cm)},z={(240:.45cm)}]
	\pgfmathtruncatemacro{\a}{4}
	\pgfmathtruncatemacro{\b}{2}
	\pgfmathtruncatemacro{\c}{3}
	\pgfmathtruncatemacro{\t}{4}
	\pgfmathtruncatemacro{\longdiag}{\a+\b+\c+\t+\t}

	\draw[dashed] (0,.5,-.5) --++(0,-6,6);	

	\draw[dotted] (0,-5,5) --++(4,0,0) --++(0,0,-3) --++(0,2,0) --++ (-1,0,0) --++(0,0,-1) --++ (0,1,0) 
	--++ (-1,0,0) --++(0,0,-1) --++ (0,2,0) -- ++(-2,0,0);
	
	\draw[clip] (0,0,0)  --++(-2,0,0) -- ++(0,0,2)
	-- ++ (0,-1,0) -- ++(-1,0,0) -- ++ (0,0,1)
	-- ++ (0,-1,0) -- ++(-1,0,0) -- ++ (0,0,2) -- ++ (0,-3,0) --++ (4,0,0)
	--++(0,1,0)
	--++(0,0,-1) --++(0,1,0)
	--++(0,0,-1) --++(0,1,0)
	--++(0,0,-1) --++(0,1,0) --++(0,0,-1)  --++(0,1,0)-- cycle;

	\foreach \i in {0,...,\longdiag}{
        \draw[-] (2,-\i,0) -- ++(-\longdiag,0,0);
        \draw[-] (2-\i,0,0) -- ++(0,-\longdiag,0);
        \draw[-] (-2+\i,0,0) -- ++(0,0,\longdiag);
    }
    \end{tikzpicture}

	$V_{2,3,2,(3,5)}$
    
    \end{multicols}

    \begin{multicols}{2}

    \begin{tikzpicture}[x={(0:.45cm)},y={(120:.45cm)},z={(240:.45cm)}]
	\pgfmathtruncatemacro{\a}{4}
	\pgfmathtruncatemacro{\b}{2}
	\pgfmathtruncatemacro{\c}{3}
	\pgfmathtruncatemacro{\t}{4}
	\pgfmathtruncatemacro{\longdiag}{\a+\b+\c+\t+\t}

	\draw[dashed] (0,.5,-.5) --++(0,-6,6);	

	\draw[dotted] (0,-5,5) --++(4,0,0) --++(0,0,-3) --++(0,2,0) --++ (-1,0,0) --++(0,0,-1) --++ (0,1,0) 
	--++ (-1,0,0) --++(0,0,-1) --++ (0,2,0) -- ++(-2,0,0);
	
	\draw[clip] (0,0,0)  --++(-2,0,0) -- ++(0,0,2)
	-- ++ (0,-1,0) -- ++(-1,0,0) -- ++ (0,0,1)
	-- ++ (0,-1,0) -- ++(-1,0,0) -- ++ (0,0,2) -- ++ (0,-3,0) --++ (4,0,0)
	--++(0,0,-1) --++(0,1,0)
	--++(0,0,-1) --++(0,1,0)
	--++(0,0,-1) --++(0,1,0)
	--++(0,0,-1) --++(0,1,0) --++(0,0,-1) -- cycle;

	\foreach \i in {0,...,\longdiag}{
        \draw[-] (2,-\i,0) -- ++(-\longdiag,0,0);
        \draw[-] (2-\i,0,0) -- ++(0,-\longdiag,0);
        \draw[-] (-2+\i,0,0) -- ++(0,0,\longdiag);
    }
    \end{tikzpicture}

	$V^+_{2,3,2,(3,5)}$

    \columnbreak
        
    \begin{tikzpicture}[x={(0:.45cm)},y={(120:.45cm)},z={(240:.45cm)}]
	\pgfmathtruncatemacro{\a}{4}
	\pgfmathtruncatemacro{\b}{2}
	\pgfmathtruncatemacro{\c}{3}
	\pgfmathtruncatemacro{\t}{4}
	\pgfmathtruncatemacro{\longdiag}{\a+\b+\c+\t+\t}

	\draw[dashed] (0,.5,-.5) --++(0,-6,6);	

	\draw[dotted] (0,-5,5) --++(4,0,0) --++(0,0,-3) --++(0,2,0) --++ (-1,0,0) --++(0,0,-1) --++ (0,1,0) 
	--++ (-1,0,0) --++(0,0,-1) --++ (0,2,0) -- ++(-2,0,0);
	
	\draw[clip] (0,0,0)  --++(-2,0,0) -- ++(0,0,2)
	-- ++ (0,-1,0) -- ++(-1,0,0) -- ++ (0,0,1)
	-- ++ (0,-1,0) -- ++(-1,0,0) -- ++ (0,0,2) -- ++ (0,-3,0) --++ (4,0,0)
	--++(0,0,-1) --++(0,1,0)
	--++(0,0,-1) --++(0,1,0)
	--++(0,0,-1) --++(0,1,0)
	--++(0,0,-1) --++(0,1,0) --++(0,0,-1) -- cycle;

	\foreach \i in {0,...,\longdiag}{
        \draw[-] (2,-\i,0) -- ++(-\longdiag,0,0);
        \draw[-] (2-\i,0,0) -- ++(0,-\longdiag,0);
        \draw[-] (-2+\i,0,0) -- ++(0,0,\longdiag);
	}

	\foreach \i in {1,3,5,7,9}{
	\fill[black, fill opacity=.2] (.5*\i,0,\i) ellipse (.1cm and .2cm);
	}
    \end{tikzpicture}

	$\overline V^+_{2,3,2,(3,5)}$
    
    \end{multicols}
    \vspace{-.5cm}
    \caption{The vertically symmetric dented hexagon $H_{4,3,3,4,(3,5),(3,5)}$, with its axis of symmetry depicted, and its various subregions: $V$, $V^+$, and $\overline V^+$. The ovals in the depiction of $\overline V^+$ represent that the lozenges they cover have weight 1/2. In each of these regions, $\underline u_1 = 3$ and $\underline u_2=2$. 
    }
    \label{fig:DentedHex}
    \end{minipage}
\end{figure}
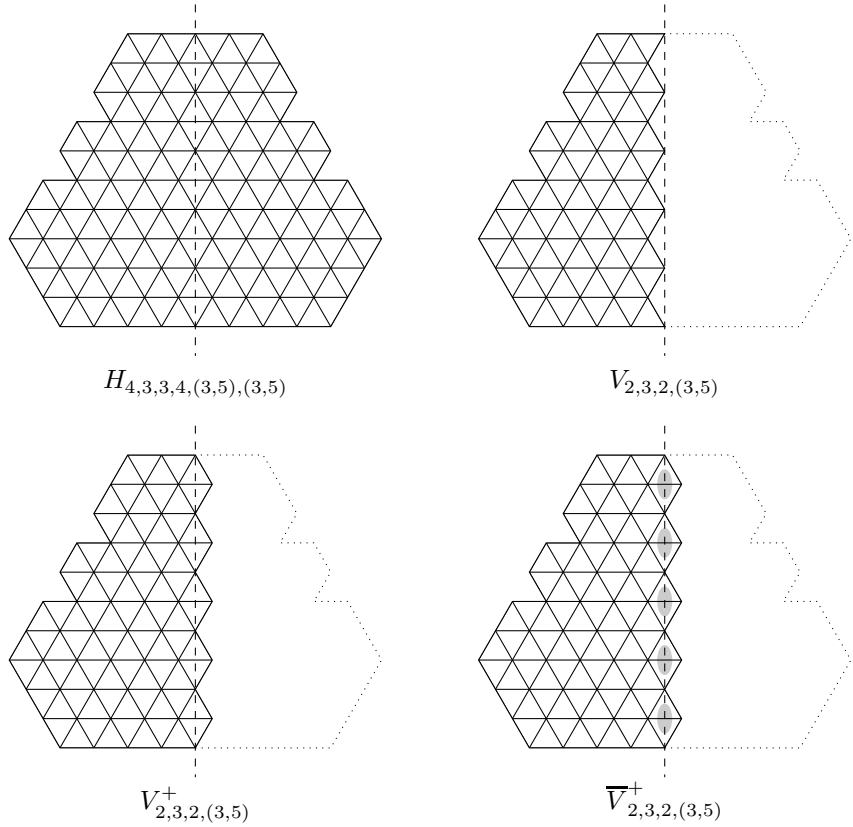

Consider a vertically symmetric dented hexagon $H:=H_{2a,b,b,2t,(u_i)_1^n,(u_i)_1^n}$. We use $V_{a,b,t,(u_i)_1^n}$ to denote the region consisting of triangles within $H$ with interiors strictly west of its vertical axis of symmetry. We use the letter $V$ because tilings of this region exactly correspond to the vertically symmetric tilings of $H$. We call these \textbf{dented half-hexagons}.
Similarly, we will use $V^+_{a,b,t,(u_i)_1^n}$ to denote the region consisting of triangles within $H$ with interiors weakly west of its vertical axis of symmetry; note $V^+$ is congruent to the complementary subregion of $V$ within $H$. We will also use $\overline V^+_{a,b,t,(u_i)_1^n}$ to denote this region with a special weighting of lozenges: those vertical\footnote{Meaning vertically symmetric.} lozenges on the symmetry axis of $H$ are assigned weight 1/2. We will also use an overline to denote an analogous weighting scheme for other regions. An example of these three regions appears in Figure \ref{fig:DentedHex}. 

A useful statistic of these regions is the distance from a dent to the corner of the hexagon beneath it minus the number of other dents between that dent and that corner: for the dent indexed by $u_i$, this statistic is $\underline{u_i}:= b+n+i-u_i$.

\begin{figure}
    \centering
	\begin{multicols}{2}
    \begin{tikzpicture}[x={(0:1cm)},y={(120:1cm)},z={(240:1cm)},scale=.45]
	\pgfmathtruncatemacro{\a}{5}
	\pgfmathtruncatemacro{\b}{4}
	\pgfmathtruncatemacro{\c}{4}
	\pgfmathtruncatemacro{\t}{1}

	\draw[dashed] (.5,0,-5) --++ (0,-5,5);

	\pgfmathtruncatemacro{\longdiag}{\a+\b+\c+\t+\t}
	
	\begin{scope}[x={(180:1cm)},y={(60:1cm)},z={(300:1cm)},xshift=6cm]
	\draw[dashed] (1,0,-4) --++
	(-4,0,0) --++ (0,0,1) --++(-1,0,0) --++(0,0,1) --++(0,-1,0) --++(1,0,0) --++ (0,-1,0) --++ (0,0,2) --++(0,-1,0) --++(0,0,1) --++(0,-1,0) --++(1,0,0) --++(0,0,-1) --++(3,0,0);

	\end{scope}
	
	
	\draw[very thick] (1,0,-4) --++ (0,0,1) --++(0,-1,0)  --++ (0,0,1) --++(0,-1,0)  --++ (0,0,1) --++(0,-1,0)  --++ (0,0,1) --++(0,-1,0);
	
	\draw[clip] (1,0,-4) --++
	(-4,0,0) --++ (0,0,1) --++(-1,0,0) --++(0,0,1) --++(0,-1,0) --++(1,0,0) --++ (0,-1,0) --++ (0,0,2) --++(0,-1,0) --++(0,0,1) --++(0,-1,0) --++(1,0,0) --++(0,0,-1) --++(3,0,0)
	--++ (0,1,0) --++(0,0,-1)
	--++ (0,1,0) --++(0,0,-1)
	--++ (0,1,0) --++(0,0,-1)
	--++ (0,1,0) -- cycle;

	\foreach \i in {0,...,\longdiag}{
        \draw[-] (\i,0,\c+\t) -- ++(0,\longdiag,0);
        \draw[-] (\a+\t,\i,0) -- ++(0,0,\longdiag);
        \draw[-] (0,\b+\t,\i) -- ++(\longdiag,0,0);
    }

    \end{tikzpicture}

	 $R$

	\columnbreak

    \begin{tikzpicture}[x={(0:1cm)},y={(120:1cm)},z={(240:1cm)},scale=.45]
	\pgfmathtruncatemacro{\a}{3}
    
	\pgfmathtruncatemacro{\longdiag}{8}
	\draw[dashed,white] (2,1,0) --++(0,-5,5);
	
	\begin{scope}[x={(0:1cm)},y={(120:1cm)},z={(240:1cm)},xshift=0cm]
	\draw[clip] (0,0,0) --++(-\a,0,0)
	 --++(0,0,1)--++(0,-1,0)
	 --++(0,0,1)--++(0,-1,0)
	 --++(0,0,1)--++(0,-1,0)
	 --++(0,0,1)--++(0,-1,0)
	--++(\a,0,0) 
	--++(0,1,0)--++(0,0,-1)
	--++(0,1,0)--++(0,0,-1)
	--++(0,1,0)--++(0,0,-1)
	--++(0,1,0)--++(0,0,-1) -- cycle;

	\foreach \i in {0,...,\longdiag}{
        \draw[-] (-\i*.5,-\i,0) -- ++(-\longdiag,0,0);
	\draw[-] (-5+\i,0,0) --++(0,0,\longdiag);
	\draw[-] (-\i,0,0)--++(0,-\longdiag,0);
	}
	\end{scope}

	\begin{scope}[x={(0:1cm)},y={(120:1cm)},z={(240:1cm)},xshift=6cm]
	\draw[clip] (0,0,0) --++(-\a,0,0)
	 --++(0,0,1)--++(0,-1,0)
	 --++(0,0,1)--++(0,-1,0)
	 --++(0,0,1)--++(0,-1,0)
	 --++(0,0,1)--++(0,-1,0)
	--++(\a,0,0) --++(0,0,-1)
	--++(0,1,0)--++(0,0,-1)
	--++(0,1,0)--++(0,0,-1)
	--++(0,1,0)--++(0,0,-1)
	--++(0,1,0) -- cycle;

	\foreach \i in {0,...,\longdiag}{
        \draw[-] (.5-\i*.5,-\i,0) -- ++(-\longdiag,0,0);
	\draw[-] (-5+\i,0,0) --++(0,0,\longdiag);
	\draw[-] (-\i,0,0)--++(0,-\longdiag,0);
	}
	\end{scope}

    \end{tikzpicture}

	$T(3,4)$ and $T(3.5,4)$
	\end{multicols}

	\begin{multicols}{2}
    \begin{tikzpicture}[x={(0:1cm)},y={(120:1cm)},z={(240:1cm)},scale=.45]
	\pgfmathtruncatemacro{\a}{5}
	\pgfmathtruncatemacro{\b}{4}
	\pgfmathtruncatemacro{\c}{4}
	\pgfmathtruncatemacro{\t}{1}

	\draw[white] (.5,0,-5) --++ (0,-5,5);

	\pgfmathtruncatemacro{\longdiag}{\a+\b+\c+\t+\t+2}
	
	\begin{scope}[x={(180:1cm)},y={(60:1cm)},z={(300:1cm)},xshift=6cm]
	\draw[white] (1,0,-4) --++
	(-4,0,0) --++ (0,0,1) --++(-1,0,0) --++(0,0,1) --++(0,-1,0) --++(1,0,0) --++ (0,-1,0) --++ (0,0,2) --++(0,-1,0) --++(0,0,1) --++(0,-1,0) --++(1,0,0) --++(0,0,-1) --++(3,0,0);

	\end{scope}

	\draw[very thick] (1,0,-4) --++ (0,0,1) --++(0,-1,0)  --++ (0,0,1) --++(0,-1,0)  --++ (0,0,1) --++(0,-1,0)  --++ (0,0,1) --++(0,-1,0);
	
	\draw[clip] (4,0,-4) --++
	(-7,0,0) --++ (0,0,1) --++(-1,0,0) --++(0,0,1) --++(0,-1,0) --++(1,0,0) --++ (0,-1,0) --++ (0,0,2) --++(0,-1,0) --++(0,0,1) --++(0,-1,0) --++(1,0,0) --++(0,0,-1) --++(6,0,0)
	--++ (0,1,0) --++(0,0,-1)
	--++ (0,1,0) --++(0,0,-1)
	--++ (0,1,0) --++(0,0,-1)
	--++ (0,1,0) -- cycle;

	\foreach \i in {0,...,\longdiag}{
        \draw[-] (\i,0,\c+\t) -- ++(0,\longdiag,0);
        \draw[-] (\a+\t+1,\i-1,0) -- ++(0,0,\longdiag);
        \draw[-] (0,\b+\t,\i) -- ++(\longdiag,0,0);
    }

    \end{tikzpicture}

	$R_{z}(3)$

\begin{tikzpicture}[x={(0:1cm)},y={(120:1cm)},z={(240:1cm)},scale=.45]
	\pgfmathtruncatemacro{\a}{5}
	\pgfmathtruncatemacro{\b}{4}
	\pgfmathtruncatemacro{\c}{4}
	\pgfmathtruncatemacro{\t}{1}

	\draw[dashed,white] (.5,0,-5) --++ (0,-5,5);

	\pgfmathtruncatemacro{\longdiag}{\a+\b+\c+\t+\t+2}
	
	\begin{scope}[x={(180:1cm)},y={(60:1cm)},z={(300:1cm)},xshift=6cm]
	\draw[white] (1,0,-4) --++
	(-4,0,0) --++ (0,0,1) --++(-1,0,0) --++(0,0,1) --++(0,-1,0) --++(1,0,0) --++ (0,-1,0) --++ (0,0,2) --++(0,-1,0) --++(0,0,1) --++(0,-1,0) --++(1,0,0) --++(0,0,-1) --++(3,0,0);

	\end{scope}

	\draw[very thick] (1,0,-4) --++ (0,0,1) --++(0,-1,0)  --++ (0,0,1) --++(0,-1,0)  --++ (0,0,1) --++(0,-1,0)  --++ (0,0,1) --++(0,-1,0);
	
	\draw[clip] (4,0,-4) --++
	(-7,0,0) --++ (0,0,1) --++(-1,0,0) --++(0,0,1) --++(0,-1,0) --++(1,0,0) --++ (0,-1,0) --++ (0,0,2) --++(0,-1,0) --++(0,0,1) --++(0,-1,0) --++(1,0,0) --++(0,0,-1) --++(6,0,0) --++(0,0,-1)
	--++ (0,1,0) --++(0,0,-1)
	--++ (0,1,0) --++(0,0,-1)
	--++ (0,1,0) --++(0,0,-1)
	--++ (0,1,0) -- cycle;

	\foreach \i in {0,...,\longdiag}{
        \draw[-] (\i,0,\c+\t) -- ++(0,\longdiag,0);
        \draw[-] (\a+\t+2,\i-1,0) -- ++(0,0,\longdiag);
        \draw[-] (0,\b+\t,\i) -- ++(\longdiag,0,0);
    }

	\foreach \i in {-3,-1,1,3}{
	\fill[black, fill opacity=.2] (6+.5*\i,0,\i) ellipse (.2cm and .4cm);
	}
    \end{tikzpicture}

	$\overline R_{z}(3.5)$
	\end{multicols}
    \caption{Top Left: a region $R$ depicted within its symmetrization. Its eastern boundary is the same shape as the western boundary of a tube of height 4. Top Right: two tubes of height 4. Bottom: tubey regions constructed from $R$, where $z$ is the eastern boundary of $R$. Note that these constructions are not unique.}
    \label{fig:pathRegion}
\end{figure}
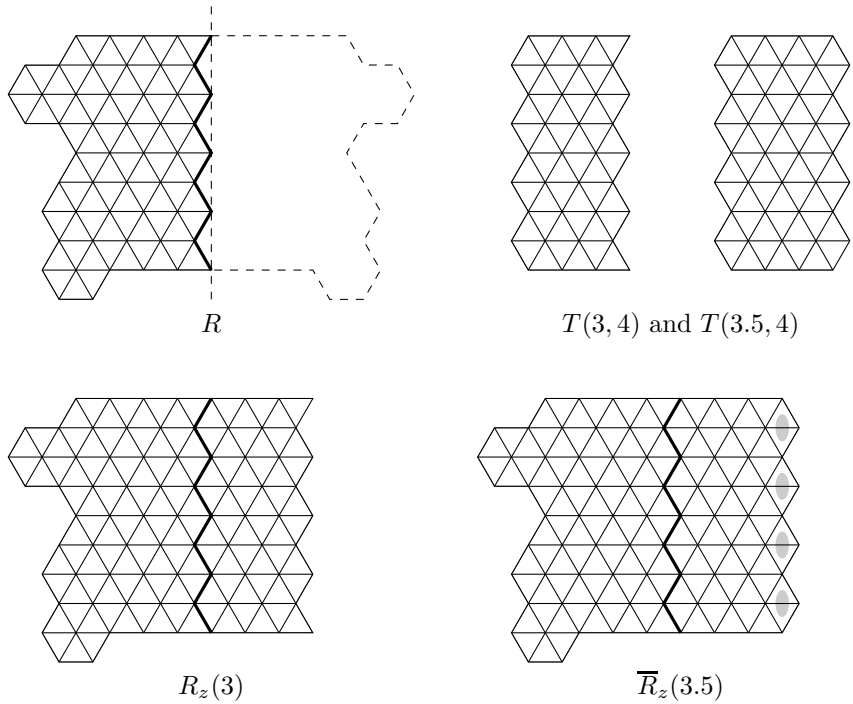

For $a \in \mathbb N/2$ and $h \in \mathbb Z$, Let $T(a,h)$ denote the region of triangles within the rectangle consisting of line segments of horizontal lattice lines which are $a+1/2$ units long, and vertical line segments which are $h\sqrt{3}$ units long,\footnote{Since adjacent parallel lattice lines are $\frac{\sqrt{3}}{2}$ units apart, this rectangle is $2h$ times the height of a triangle.} so that the top left corner of the rectangle is in the middle of the base of a triangle. We call this the \textbf{tube} of length $a$ and height $h$. Two tubes are depicted in the top right of Figure \ref{fig:pathRegion}.

Consider a set of tubes $\mathcal T = \{T(a,h): a \in \mathbb N\}$ which share a fixed western boundary which is denoted $z$. Let $R$ be a region with a fixed relative position to $z$ so that $R$ does not intersect any tube in $\mathcal T$ except along $z$. Then we say $R_z(0):=R$ and $R_z(a)$ is the union of $R$ and the tube $T(a,h) \in \mathcal T$. We call a region (or family of regions) which is constructed in this way \textbf{tubey}. Two examples are depicted in the bottom of Figure \ref{fig:pathRegion}.

It is natural to describe a tubey region in terms of $R$ and $z$. For example, $\{V_{a,b,n,(u_i)_1^n}: a \in \mathbb N\}$ is $R_{z}(a)$, and  $\{V^+_{a,b,n,(u_i)_1^n}: a \in \mathbb N\}$ is $R_{z}(a+1/2)$, where $R$ is $V_{0,b,n,(u_i)_1^n}$ and $z$ is the eastern boundary of $R$ extended by two edges. Tubey families thus generalize families of dented half-hexagons, and are characterized by ``stretching.'' When $a$ is sufficiently large that the tube extends east of $R$, the tubey regions $R_{z}(a)$ and $R_{z}(a+1/2)$ are congruent to complementary halves of a vertically symmetric region.

We will discuss tubey regions with an unusual weighting. We use the notation $\overline R_{z}(a)$ to denote the tubey region $R_{z}(a)$ where the vertical lozenges along\footnote{``Along'' here meaning sharing two edges with.} the eastern boundary of the tube are assigned weight $\frac 12$. Observe this notation is consistent with the notation for the weighted region $\overline V^+_{a,b,n,(u_i)_1^n}$ interpreted as a tubey region. This weighting scheme is natural for certain symmetrizable regions which arise when applying Ciucu's Factorization Theorem, which we discuss in Section \ref{background}.

The results of this paper yield simple product formulas for the ratios of the tiling functions of $V_{a,b,n,(u_i)_1^n}$ and $V_{0,b,n,(u_i)_1^n}$, as well as  $V^+_{a,b,n,(u_i)_1^n}$ and $V^+_{0,b,n,(u_i)_1^n}$. 
This is the second paper by this author and one of many in the field which identify simple relationships between related tiling functions which may not themselves be simple. Similar results have been given by Lai, Ciucu, Rohatgi, and Byun \cite{By19}, \cite{Ci19a}, \cite{Ci19b}, \cite{La19a}, \cite{La19b}, \cite{La19c}.
 In fact, in \cite{La21}, Lai independently studies dented hexagons and dented half-hexagons with some equivalent or more general results to this paper, and in  \cite{Fu} Fulmek gives alternative proofs of some of these. This paper gives alternative proofs of those results, and also gives a new type of result which expresses a close relationship between the tiling functions of certain weighted tubey regions which arise when applying Ciucu's Factorization Theorem and their unweighted counterparts. A similar relationship was observed in \cite{Ci13}, see Theorem \ref{relationgeneral} of this paper and the note that follows.

\section{Further Background}
\labelnote{background}

In this section we discuss some of the ideas and techniques from this area that will be useful in the sections ahead.

The partition of the region's triangles into those which are up-pointing and those which are down-pointing corresponds to a proper 2-coloring of the region's planar dual graph; thus, borrowing notions from matching theory on bipartite graphs, we say a region is \textbf{balanced} if it contains the same number of up-pointing and down-pointing triangles. An unbalanced region has no tilings.

For some regions, every possible tiling of that region includes a particular lozenge. For example, if a triangle in a region is adjacent to only one other triangle then all tilings of that region must include the lozenge covering those two triangles. Given a region $R$ with a forced lozenge $l$, if one constructs $R'$ by removing from $R$ the triangles covered by $l$, then the tilings of $R$ and $R'$ are in a natural bijection (tilings of $R$ restrict to tilings of $R'$ by the exclusion of $l$). Then if $\wt(l)$ denotes the weight of lozenge $l$,
$M(R) = \wt(l) M(R').$ Thus when we wish to study $R$ it will sometimes behoove us to study $R'$ instead, especially when $\wt(l)=1.$

The lozenge tilings of a region are in bijection with families of nonintersecting paths on a subgraph of the north-east directed square lattice. The particular subgraph depends on the region (but is essentially shaped like the region), and the paths start at the subgraph's sources and end at its sinks.

Given a lozenge tiling of a region, we mark the middle of the east edge of each up-pointing triangle with a dot; we decorate each lozenge which has dots on its boundary by drawing a line connecting these dots. An appropriate linear transformation of this image moves the dots to vertices on the square lattice, so that the lines decorating certain lozenges form north-east paths starting from the region's southwest boundaries and ending at the region's northeast boundaries. Figure \ref{fig:Lindstrom} depicts this transformation. The other possible tilings of the region correspond to the other nonintersecting families of lattice paths on the subgraph of the north-east directed square lattice which lies within the distorted region. Using this bijection to enumerate lozenge tilings is a standard technique, and is explained in greater detail in Sections 4 and 5 of \cite{Ci01}.

\begin{figure}
    \begin{minipage}[c]{\linewidth}
    \begin{multicols}{2}
    \centering
    \begin{tikzpicture}[x={(0:1cm)},y={(120:1cm)},z={(240:1cm)},scale=.45]
	\pgfmathtruncatemacro{\a}{5}
	\pgfmathtruncatemacro{\b}{4}
	\pgfmathtruncatemacro{\c}{4}
	\pgfmathtruncatemacro{\t}{1}
    
	\pgfmathtruncatemacro{\longdiag}{\a+\b+\c+\t+\t}
	
	\node[shape=circle,fill=black,scale=.35] (A) at (-2,1.5,0) {};
	\node[shape=circle,fill=black,scale=.35] (a) at (3,1.5,-1) {};
	\node[shape=circle,fill=black,scale=.35] (B) at (-1,.5,0) {};
	\node[shape=circle,fill=black,scale=.35] (b) at (3,.5,0) {};
	\node[shape=circle,fill=black,scale=.35] (C) at (0,.5,3) {};
	\node[shape=circle,fill=black,scale=.35] (c) at (4,.5,2) {};
	\node[shape=circle,fill=black,scale=.35] (D) at (1,.5,5) {};
	\node[shape=circle,fill=black,scale=.35] (d) at (5,.5,4) {};
	
	\draw[clip] (1,0,-4) --++
	(-4,0,0) --++ (0,0,1) --++(-1,0,0) --++(0,0,1) --++(0,-1,0) --++(1,0,0) --++ (0,-1,0) --++ (0,0,2) --++(0,-1,0) --++(0,0,1) --++(0,-1,0) --++(1,0,0) --++(0,0,-1) --++(3,0,0)
	--++ (0,1,0) --++(0,0,-1)
	--++ (0,1,0) --++(0,0,-1)
	--++ (0,1,0) --++(0,0,-1)
	--++ (0,1,0) -- cycle;
	
	\draw (0,0,4) --++ (1,0,0);
	\draw (0,0,3) --++ (4,0,0);
	\draw (0,1,3) --++ (3,0,0);
	\draw (4,1,3) --++ (1,0,0);
	\draw (-1,0,1) --++ (4,0,0);
	\draw (-1,0,0) --++ (4,0,0);
	\draw (-1,1,0) --++ (4,0,0);
	\draw (-2,2,0) --++ (3,0,0);
	\draw (2,2,0) --++ (2,0,0);
	\draw (0,3,0) --++ (4,0,0);

	\draw (-1,1,0) --++(0,1,0) --++(0,0,-1);
	\draw (2,0,5) --++(0,1,0) --++(0,0,-1) --++(0,1,0) --++(0,0,-2) --++(0,2,0) --++(0,0,-2);
	\draw (2,0,4) --++(0,1,0);
	\draw (3,0,4) --++(0,2,0) --++(0,0,-2) --++(0,2,0) --++(0,0,-2);
	\draw (4,0,4) --++(0,2,0) --++(0,0,-2) --++(0,1,0) --++ (0,0,-1) --++ (0,1,0) --++(0,0,-1);
	\draw (0,0,-1) --++(0,0,-1) --++ (0,1,0);
	\draw (3,0,3) --++ (0,0,-1) --++(0,1,0);
	
	\draw[very thick, blue] (0,-.5,4) --++ (1,0,0) --++(0,0,-1) --++(3,0,0);
	\draw[very thick, blue] (0,.5,3) --++ (3,0,0) --++(0,0,-1) --++(1,0,0);
	\draw[very thick,blue] (-1,.5,0) --++(4,0,0);
	\draw[very thick,blue] (-2,1.5,0) --++(3,0,0) --++(0,0,-1) --++(2,0,0);
	
	\foreach \i in {0,...,\longdiag}{
	\foreach \j in {0,...,\longdiag}
        \node [shape=circle,fill=black,scale=.35] (ij) at (\i,\j+.5,\c+\t) {};
    }
	
    \end{tikzpicture}
    
    \vfill
    \null
     \columnbreak
    
    \begin{tikzpicture}[x={(0:1cm)},y={(135:1.41cm)},z={(270:1cm)},scale=.45]
	\pgfmathtruncatemacro{\a}{5}
	\pgfmathtruncatemacro{\b}{4}
	\pgfmathtruncatemacro{\c}{4}
	\pgfmathtruncatemacro{\t}{1}
    
	\pgfmathtruncatemacro{\longdiag}{\a+\b+\c+\t+\t}
	
		\node[shape=circle,fill=black,scale=.35] (A) at (-2,1.5,0) {};
	\node[shape=circle,fill=black,scale=.35] (a) at (3,1.5,-1) {};
	\node[shape=circle,fill=black,scale=.35] (B) at (-1,.5,0) {};
	\node[shape=circle,fill=black,scale=.35] (b) at (3,.5,0) {};
	\node[shape=circle,fill=black,scale=.35] (C) at (0,.5,3) {};
	\node[shape=circle,fill=black,scale=.35] (c) at (4,.5,2) {};
	\node[shape=circle,fill=black,scale=.35] (D) at (1,.5,5) {};
	\node[shape=circle,fill=black,scale=.35] (d) at (5,.5,4) {};
	
	\draw[clip] (1,0,-4) --++
	(-4,0,0) --++ (0,0,1) --++(-1,0,0) --++(0,0,1) --++(0,-1,0) --++(1,0,0) --++ (0,-1,0) --++ (0,0,2) --++(0,-1,0) --++(0,0,1) --++(0,-1,0) --++(1,0,0) --++(0,0,-1) --++(3,0,0)
	--++ (0,1,0) --++(0,0,-1)
	--++ (0,1,0) --++(0,0,-1)
	--++ (0,1,0) --++(0,0,-1)
	--++ (0,1,0) -- cycle;
	
	\draw (0,0,4) --++ (1,0,0);
	\draw (0,0,3) --++ (4,0,0);
	\draw (0,1,3) --++ (3,0,0);
	\draw (4,1,3) --++ (1,0,0);
	\draw (-1,0,1) --++ (4,0,0);
	\draw (-1,0,0) --++ (4,0,0);
	\draw (-1,1,0) --++ (4,0,0);
	\draw (-2,2,0) --++ (3,0,0);
	\draw (2,2,0) --++ (2,0,0);
	\draw (0,3,0) --++ (4,0,0);

	\draw (-1,1,0) --++(0,1,0) --++(0,0,-1);
	\draw (2,0,5) --++(0,1,0) --++(0,0,-1) --++(0,1,0) --++(0,0,-2) --++(0,2,0) --++(0,0,-2);
	\draw (2,0,4) --++(0,1,0);
	\draw (3,0,4) --++(0,2,0) --++(0,0,-2) --++(0,2,0) --++(0,0,-2);
	\draw (4,0,4) --++(0,2,0) --++(0,0,-2) --++(0,1,0) --++ (0,0,-1) --++ (0,1,0) --++(0,0,-1);
	\draw (0,0,-1) --++(0,0,-1) --++ (0,1,0);
	\draw (3,0,3) --++ (0,0,-1) --++(0,1,0);
	
	\draw[very thick, blue] (0,-.5,4) --++ (1,0,0) --++(0,0,-1) --++(3,0,0);
	\draw[very thick, blue] (0,.5,3) --++ (3,0,0) --++(0,0,-1) --++(1,0,0);
	\draw[very thick,blue] (-1,.5,0) --++(4,0,0);
	\draw[very thick,blue] (-2,1.5,0) --++(3,0,0) --++(0,0,-1) --++(2,0,0);
	
	\foreach \i in {0,...,\longdiag}{
	\foreach \j in {0,...,\longdiag}
        \node [shape=circle,fill=black,scale=.35] (ij) at (\i,\j+.5,\c+\t) {};
    }
	
    \end{tikzpicture}
    
    \vfill
    
    \end{multicols}

    \vspace{-.5cm}
    
    \caption{Left: a lozenge tiling of a region is decorated. Right: that image is linearly transformed so that its decorations depict a nonintersecting family of paths on a particular subgraph of the north-east directed square lattice.}
    \labelnote{fig:Lindstrom}
    \end{minipage}
\end{figure}
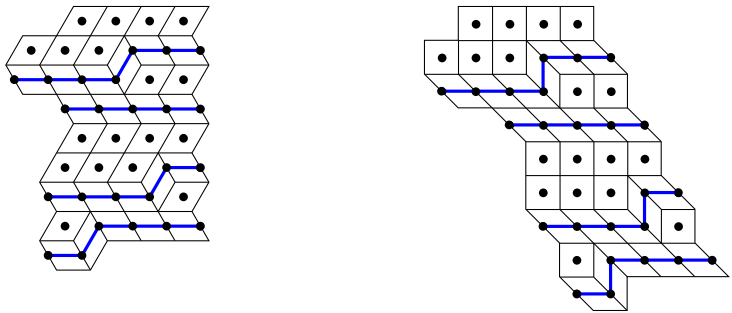
A signed count of families of nonintersecting lattice paths on an acyclic digraph from $n$ indexed sources to $n$ indexed sinks is given by the determinant of an $n \times n$ matrix, where the $(i,j)$th entry is the number of paths from the $i$th source to the $j$th sink. We henceforth refer to this as the \textbf{path matrix} of a region.
This result has been discovered independently at least three times. A version was published in 1959 by Karlin and McGregor ((B) from \cite{Ka}) who were studying the paths of multiple particles through certain discrete Markov processes, so that the particles never coincided in a state.
In 1973 Lindstr{\"o}m published the result, as described above, in \cite{Li}. The result again appeared in a paper in 1985  \cite{Ge85} by Gessel and Viennot, and again in a 1989 preprint \cite{Ge89}, in the context that the digraph is a portion of the north-east directed square lattice, so that the $(i,j)$th entry of the path matrix is a simple binomial. For Gessel and Viennot (as for us) the sign of each nonintersecting family can be assumed to be positive, so that the determinant of the matrix gives the total number of such families.

In one case where we apply this result, the path matrix is naturally the product of two other path matrices. In this case, we will apply the Cauchy-Binet formula (Theorem 4.15 in Section 4.6 of \cite{Br}).
This method has also been used by Ciucu. \cite{Ci21}

\begin{theorem}[The Cauchy Binet formula]\labelnote{CauchyBinet}
Let A be an $m\times n$ matrix and B an $n\times m$ matrix. For $S,T$ sets of integers, let $X_{S,T}$ denote the minor of matrix $X$ consisting of its rows indexed by $S$ and columns indexed by $T$. Then
$$\det(AB) = \sum_{S \in \binom{[n]}{m}} \det(A_{[m],S}) \det(B_{S,[m]}).$$
\end{theorem}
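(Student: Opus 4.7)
The plan is to derive the Cauchy--Binet formula directly from the Leibniz formula for the determinant combined with the bilinearity of matrix multiplication, with essentially no ingredients beyond those two. First I would write
\[
\det(AB) = \sum_{\sigma \in S_m} \operatorname{sgn}(\sigma) \prod_{i=1}^{m} (AB)_{i,\sigma(i)},
\]
substitute $(AB)_{i,\sigma(i)} = \sum_{k_i = 1}^{n} A_{i,k_i} B_{k_i,\sigma(i)}$, and then distribute the product and swap the order of summation, placing the sum over index tuples $(k_1,\ldots,k_m) \in [n]^m$ on the outside.

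Next, I would recognize the resulting inner sum over $\sigma \in S_m$ as $\det B_{(k_1,\ldots,k_m),[m]}$, the determinant of the $m\times m$ submatrix of $B$ whose rows are rows $k_1,\ldots,k_m$ of $B$ in the specified order. This inner determinant vanishes whenever two of the indices $k_i$ coincide, so only tuples with pairwise distinct entries survive. I would then group the surviving tuples according to their underlying set $S = \{k_1,\ldots,k_m\}$; writing $S = \{s_1 < \cdots < s_m\}$, each such tuple is uniquely of the form $(s_{\tau(1)},\ldots,s_{\tau(m)})$ for some $\tau \in S_m$.

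Finally, I would use antisymmetry of the determinant under row permutation to rewrite $\det B_{(s_{\tau(1)},\ldots,s_{\tau(m)}),[m]} = \operatorname{sgn}(\tau)\, \det B_{S,[m]}$. Factoring $\det B_{S,[m]}$ out of the sum over $\tau$ leaves exactly
\[
\sum_{\tau \in S_m} \operatorname{sgn}(\tau) \prod_{i=1}^{m} A_{i,s_{\tau(i)}} = \det A_{[m],S},
\]
by the Leibniz formula applied to the submatrix $A_{[m],S}$. Summing over $S \in \binom{[n]}{m}$ then yields the claimed identity.

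There is no real obstacle here beyond bookkeeping; the only step that demands care is keeping track of the sign picked up when reordering the rows of $B$ to match the canonical order on $S$, and verifying that this sign combines with the $A$-entries in exactly the way needed to reconstruct $\det A_{[m],S}$. Once that sign accounting is done, the formula falls out immediately.
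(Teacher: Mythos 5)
Your argument is correct: expanding $\det(AB)$ by the Leibniz formula, using multilinearity to pull out the sum over index tuples, discarding tuples with repeated entries because the corresponding row-submatrix of $B$ is singular, and then regrouping by the underlying set $S$ with the sign $\operatorname{sgn}(\tau)$ absorbed into the Leibniz expansion of $\det A_{[m],S}$ is the standard and complete proof of Cauchy--Binet. Note, however, that the paper does not prove this statement at all --- it is quoted as Theorem 4.15 of the cited linear algebra text of Broida and Williamson --- so there is no proof in the paper to compare against; your self-contained derivation is a perfectly good substitute for the citation.
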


We will also apply the following results by Kuo, which are slight variations on Theorems 2.4 and 2.5 from \cite{Ku04}. These results were later given in much greater generality by Yan, Yeh, and Zhang \cite{Ya}.

\begin{lemma}\labelnote{kuo} Let $R$ be a simply connected region of the triangular lattice, and let $\alpha, \beta, \gamma, \delta$ be triangles in $R$ of the same orientation which touch the boundary of $R$ at a corner or edge, so that the places they touch the boundary appear in the cyclic order $\alpha, \beta, \gamma, \delta$. Then
\begin{align*}
    & \M(R - \alpha-\beta)\cdot \M(R - \gamma-\delta) \\
   =& \M(R -\alpha-\delta) \cdot \M(R - \beta-\gamma) - \M(R-\alpha-\gamma) \cdot \M(R-\beta-\delta) \numberthis \labelnote{eqn}
\end{align*}
and
\begin{align*}
    & \M(R - \alpha)\cdot \M(R-\beta - \gamma-\delta)+  M(R-\gamma)\cdot M(R-\alpha-\beta-\delta) \\
   =& \M(R -\beta) \cdot \M(R - \alpha-\gamma-\delta) + \M(R-\delta) \cdot \M(R-\alpha-\beta-\gamma). \numberthis \labelnote{eqn2}
\end{align*}
\end{lemma}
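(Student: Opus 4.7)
The plan is to deduce both identities from Kuo's original graphical condensation theorems by translating the lozenge-tiling statement into the equivalent perfect-matching statement on the planar dual bipartite graph.

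Specifically, I would associate to $R$ its planar dual graph $G$: the vertices of $G$ are the triangles of $R$ (2-colored by their two possible orientations, so that $G$ is bipartite), edges of $G$ encode triangle adjacency in $R$, and edge weights are inherited from the lozenge weights of $R$. Under this construction, lozenge tilings of $R$ are in weight-preserving bijection with perfect matchings of $G$, so that in particular $\M(R-S)$ equals the weighted count of perfect matchings of the graph obtained from $G$ by deleting the vertices dual to the triangles in $S$, for any set $S$ of triangles of $R$.

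Because $\alpha,\beta,\gamma,\delta$ all have the same orientation, they correspond to four vertices $v_\alpha, v_\beta, v_\gamma, v_\delta$ in the same color class of $G$. Because each touches $\partial R$ at a corner or edge, each of these vertices is incident to the outer face of $G$; and because $R$ is simply connected, the cyclic order of these vertices around that outer face agrees with the cyclic order along $\partial R$ given in the hypothesis. Identities (\ref{eqn}) and (\ref{eqn2}) are then, respectively, Kuo's Theorem 2.5 and Theorem 2.4 from \cite{Ku04}, applied to $G$. The different orientations (i.e., different color classes) into which $\alpha,\beta,\gamma,\delta$ would fall in Kuo's original theorems are here replaced by the all-same-class situation; the resulting imbalance $|V_1|-|V_2|=2$ for (\ref{eqn}) and $|V_1|-|V_2|=1$ for (\ref{eqn2}) is automatic, since otherwise every term in the identity vanishes and the identity reduces to $0=0$.

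The main obstacle, aside from this routine dualization, is sign bookkeeping. Kuo's condensation identities are stated in subtraction-free form, whereas (\ref{eqn}) is written in the Dodgson/Desnanot--Jacobi form with an explicit minus sign separating the ``non-crossing'' pairing $\M(R-\alpha-\delta)\M(R-\beta-\gamma)$ from the ``crossing'' pairing $\M(R-\alpha-\gamma)\M(R-\beta-\delta)$. Rearranging Kuo's positive-sum statement for four same-color boundary vertices in cyclic order to the form given in (\ref{eqn}) amounts to identifying which of the three perfect matchings of the index set $\{\alpha,\beta,\gamma,\delta\}$ corresponds to the crossing chord diagram; given the cyclic order $\alpha,\beta,\gamma,\delta$, this is the pair $(\alpha,\gamma),(\beta,\delta)$, which is precisely the term carrying the minus sign. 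With that identification, both identities follow directly from \cite{Ku04}.
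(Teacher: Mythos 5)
Your overall strategy is the right one, and it is in fact all the paper itself offers: the lemma is stated with no proof beyond the citation to Kuo, so passing to the planar dual bipartite graph, observing that $\M(R-S)$ becomes a weighted count of perfect matchings of $G$ minus the dual vertices of $S$, and invoking Kuo's condensation theorems for four boundary vertices in a single color class is exactly the intended argument. Your remark that the balance hypotheses ($|V_1|-|V_2|=2$ for (\ref{eqn}), $=1$ for (\ref{eqn2})) are automatic because otherwise every term vanishes is a genuinely useful point that the paper leaves implicit. (One small bookkeeping slip: the correspondence is the reverse of what you wrote --- the two-and-two identity (\ref{eqn}) is the variant of Kuo's Theorem~2.4, where $|V_1|=|V_2|+2$, and the one-and-three identity (\ref{eqn2}) is the variant of Theorem~2.5, where $|V_1|=|V_2|+1$.)

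The genuine gap is in your final sign identification, and it matters. Kuo's Theorem~2.4 states, for $a,b,c,d\in V_1$ in cyclic order on a face, that
$$\M(G-a-c)\,\M(G-b-d)=\M(G-a-b)\,\M(G-c-d)+\M(G-a-d)\,\M(G-b-c),$$
i.e.\ the \emph{crossing} pairing $(a,c),(b,d)$ is the standalone term, equal to the \emph{sum} of the two non-crossing pairings. This is forced by the superposition argument: the two alternating paths in the union of a matching of $G-a-c$ with one of $G-b-d$ must each join a vertex uncovered by the first matching to one uncovered by the second, and planarity then permits exactly the two non-crossing repairings. Consequently, when the identity is rearranged so that $\M(R-\alpha-\beta)\M(R-\gamma-\delta)$ stands alone, the minus sign lands on the other \emph{non-crossing} product $\M(R-\alpha-\delta)\M(R-\beta-\gamma)$, not on the crossing product $\M(R-\alpha-\gamma)\M(R-\beta-\delta)$ as you assert; what Kuo actually gives is
$$\M(R-\alpha-\beta)\M(R-\gamma-\delta)=\M(R-\alpha-\gamma)\M(R-\beta-\delta)-\M(R-\alpha-\delta)\M(R-\beta-\gamma),$$
which has the two right-hand products transposed relative to (\ref{eqn}) as printed. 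Indeed, (\ref{eqn}) as printed cannot be the correct cyclically invariant statement: applying it also to the rotated quadruple $(\beta,\gamma,\delta,\alpha)$ and adding would force $\M(R-\alpha-\gamma)\M(R-\beta-\delta)=0$ identically. Note that the paper's own later application, equation (\ref{eq:kuo2}), uses the correct form (with $\M(R_{\alf,\gam})\M(R_{\bet,\del})$ as the dominant term), so the printed (\ref{eqn}) is best read as a transcription slip; your derivation should target the transposed version rather than supply a (necessarily invalid) justification for the sign placement as printed.
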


The following is a generalization of the Graph Splitting Lemma, which appears as Lemma 2.1 in a 2014 paper by Ciucu and Lai \cite{Ci14}, and is implicit in earlier work by Ciucu \cite{Ci97}. This more general result is also implicit in earlier work. The proof given here generalizes the proof from the 2014 paper.

\begin{lemma}
	Let $G=(V,E)$ be a connected balanced bipartite finite graph, with a proper coloring of vertices either black or white. Let $G_1 = (V_1,E_1) , G_2 = (V_2,E_2)$ be disjoint induced subgraphs of $G$ so that $V = V_1 \sqcup V_2$, and $E = E_1 \sqcup E_2 \sqcup E_3$. Suppose that all vertices in $V_1$ which are incident to an edge in $E_3$ are colored black, and let $n$ denote the number of black vertices in $V_1$ minus the number of white vertices in $V_1$. Then all perfect matchings of $G$ include $n$ edges from $E_3$; in particular if $n<0$ then $M(G)=0$, if $n > |E_3|$ then $M(G) = 0$, and if $n=0$ then $M(G)= M(G_1) \cdot M(G_2)$.
\end{lemma}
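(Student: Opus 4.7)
The plan is to count, in any perfect matching $M$ of $G$, exactly how many edges come from $E_3$, and show that this number is forced to equal $n$ regardless of the choice of $M$.

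First I would use the one-sided coloring hypothesis. Since every vertex of $V_1$ incident to an $E_3$ edge is black, every white vertex of $V_1$ has all its edges in $E_1$. Therefore in any perfect matching $M$, each white vertex of $V_1$ is matched by an edge of $E_1$ to a (necessarily black) vertex of $V_1$. Let $k$ denote the number of edges of $M$ lying in $E_3$; each such edge pairs a black vertex of $V_1$ with a vertex of $V_2$.

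Next I would do the bookkeeping on $V_1$. The vertices of $V_1$ not matched across $E_3$ are matched internally by edges of $E_1$, giving a perfect matching of the induced subgraph of $G_1$ on the unmatched-across vertices. Since $G_1$ is bipartite, such a matching requires equal numbers of black and white vertices, so $(\#\text{black in }V_1) - k = \#\text{white in }V_1$, i.e.\ $k = n$. This gives the first conclusion, and immediately forces $M(G)=0$ when $n<0$ or $n>|E_3|$.

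For the case $n=0$, no edge of $E_3$ is used, so every perfect matching $M$ of $G$ restricts to a perfect matching of $G_1$ (on the $V_1$-side) and a perfect matching of $G_2$ (on the $V_2$-side); conversely any such pair assembles into a perfect matching of $G$. This gives the factorization $M(G)=M(G_1)M(G_2)$. There is no real obstacle here: the entire argument is a parity/cardinality count, and the main subtlety is simply noting that the hypothesis ``all $V_1$-endpoints of $E_3$ edges are black'' is exactly what is needed to freeze the white vertices of $V_1$ inside $G_1$ and make the count of crossing edges a determined invariant rather than a variable.
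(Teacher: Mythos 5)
Your proposal is correct and follows essentially the same route as the paper's own proof: the key observation in both is that every white vertex of $V_1$ must be matched inside $V_1$ to a black vertex, which forces exactly $n$ black vertices of $V_1$ to be matched across $E_3$, and the $n=0$ case then gives the bijection between matchings of $G$ and pairs of matchings of $G_1$, $G_2$. No substantive differences to report.
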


\begin{proof}
	Let $G, G_1, G_2, E_3,$ and $n$ be as defined above. We remark that all vertices in $G_2$ incident to an edge in $E_3$ must be white, and there must be $n$ more white vertices than black vertices in $G_2$.  Let $\mu \subset E$ be a perfect matching of $M$.

Suppose $n<0$. Each white vertex in $V_1$ must be matched to a black vertex in $V_1$. Since there are fewer black vertices than white vertices in $V_1$, no such matching $\mu$ exists.

Suppose instead $n \geq 0$. Then each white vertex in $V_1$ must be matched to a black vertex in $V_1$, leaving the other $n$ black vertices in $V_1$ which are matched to white vertices in $V_2$; thus exactly $n$ edges are in $\mu \cap E_3$. Therefore if $n > |E_3|$, no such matching $\mu$ exists.

In the case where $n=0$,  $\mu$ must induce a perfect matching on each of $G_1$ and $G_2$, and similarly any pair of perfect matchings $\mu_1$ on $G_1$ and $\mu_2$ on $G_2$ together form a matching on $G$, so the perfect matchings on $G$ and the pairs of perfect matchings on $G_1, G_2$ are in bijection.

\end{proof}

We apply this to a specific type of partition of regions on the triangular lattice: 

\begin{corollary}[Region Splitting Lemma]\labelnote{splitting}
Let $R$ be a balanced region of the triangular lattice, and $L$ a lattice line passing through $R$, so that $P$ and $Q$ are the subregions of $R$ on each side of the lattice line. Suppose the triangles in $P$ along $L$ are all up-pointing, and let $n$ denote the number of up-pointing triangles in $P$ minus the number of down-pointing triangles in $P$. Then all tilings of $R$ have exactly $n$ lozenges which cross $L$. 
In particular, if $n < 0$ then $R$ is untileable, if fewer than $n$ lozenges cross $L$ then $R$ is untileable, and if $n=0$ then $M(R) = M(P)\cdot M(Q).$
\end{corollary}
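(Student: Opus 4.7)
The plan is to reduce the Region Splitting Lemma to the preceding Graph Splitting Lemma by passing to the planar dual. First I would set $G$ to be the planar dual graph of $R$: vertices are the unit triangles of $R$, edges connect pairs of triangles sharing an edge. The standard bijection between lozenge tilings of $R$ and perfect matchings of $G$ turns $\M(R)$ into the matching count of $G$, and up-pointing versus down-pointing triangles provides the proper 2-coloring that makes $G$ bipartite and balanced.

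Next I would identify the data needed by the Graph Splitting Lemma. Let $V_1$ be the dual vertices of triangles in $P$ and $V_2$ those in $Q$, with $G_1, G_2$ the induced subgraphs. The edges of $G$ not in $G_1\cup G_2$ are precisely the dual edges crossing $L$; these are in bijection with lozenges of $R$ that straddle $L$, so taking $E_3$ to be this set matches the terminology of the corollary. The crucial hypothesis of the Graph Splitting Lemma is that every vertex of $V_1$ incident to $E_3$ has the same color, and this is exactly what the geometric hypothesis buys us: the vertices of $V_1$ on $E_3$ correspond to triangles in $P$ lying along $L$, and by assumption these are all up-pointing, hence all colored (say) black.

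Then $n$ in the graph lemma, defined as (black vertices in $V_1$) minus (white vertices in $V_1$), coincides with the quantity in the corollary, namely (up-pointing triangles in $P$) minus (down-pointing triangles in $P$). At this point the three conclusions of the Graph Splitting Lemma transport directly: every perfect matching of $G$ uses exactly $n$ edges of $E_3$, so every tiling of $R$ uses exactly $n$ lozenges crossing $L$; if $n<0$ or $n>|E_3|$ then $R$ is untileable; and if $n=0$ each perfect matching of $G$ restricts to a pair of perfect matchings on $G_1$ and $G_2$, yielding $\M(R)=\M(P)\cdot \M(Q)$.

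The only point requiring care, and the one I expect to be the sole obstacle, is making sure the translation between geometric and graph-theoretic objects is watertight: in particular that the ``edges $E_3$'' one reads off the dual graph are in one-to-one correspondence with lozenges of $R$ that cross $L$ (and not with some other set of incidences), and that the color-alignment hypothesis of the graph lemma really is implied by the hypothesis that the $P$-side triangles along $L$ are uniformly up-pointing. Once those identifications are recorded, the corollary is an immediate specialization.
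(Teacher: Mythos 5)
Your proposal is correct and is exactly the derivation the paper intends: the corollary is stated as an immediate specialization of the Graph Splitting Lemma under the planar-dual dictionary (triangles $\leftrightarrow$ vertices, lozenges $\leftrightarrow$ matched edges, up/down orientation $\leftrightarrow$ the 2-coloring, lozenges crossing $L$ $\leftrightarrow$ $E_3$). The identifications you flag as needing care are indeed the whole content of the argument, and they go through as you describe.
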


The following theorem is a special case of the Factorization Theorem by Ciucu \cite{Ci97}, expressed in the language and context of this paper.

\begin{theorem}[Factorization Theorem (Ciucu)]\labelnote{ciucu}
Let $R$ be a balanced vertically symmetric region with  symmetry axis $L$. Label the triangles within $R$ on $L$ from top to bottom, $a_1, b_1, ..., a_k, b_k$.
Let $R^-$ denote the subregion of $R$ consisting of triangles strictly west of $L$, along with triangles labeled $a_i$ which are down-pointing, and those labeled $b_i$ which are up-pointing. Let $R^+$ denote the subregion of $R$ complementary to $R^-$. Say the weight functions for $R^-$ and $R^+$ are the restrictions of the weight function of $R$ to those regions, but with the weights of any vertical lozenges on $L$ cut in half.
Then
$$\M(R) = 2^k \M(R^-) \M(R^+).$$
\end{theorem}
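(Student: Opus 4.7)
The plan is to prove this theorem as Ciucu's original Factorization Theorem \cite{Ci97}, via a weight-preserving bijection on the bipartite matching formulation. First I would translate $\M(R)$ into the weighted matching generating function of the planar dual graph $G$ of $R$, and exploit the reflection $\sigma$ across $L$ as a graph automorphism. The axis triangles $a_1, b_1, \ldots, a_k, b_k$ correspond to the $2k$ vertices of $G$ fixed by $\sigma$; the vertical lozenges on $L$ correspond to $\sigma$-fixed edges of $G$, each joining a pair of oppositely-oriented axis triangles. All other edges and vertices of $G$ lie in $\sigma$-orbits of size~2.

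Next I would construct the half-graphs $G^\pm$ dual to $R^\pm$, with each axis triangle sent to the half prescribed by the theorem and each axis edge carrying halved weight. The core combinatorial claim is a weight-preserving $2^k$-to-$1$ correspondence between matchings of $G$ and pairs $(\mu^-, \mu^+)$ of matchings of $G^\pm$: starting from such a pair, one reconstructs a matching of $G$ by gluing along $L$, with each of the $k$ axis pairs contributing an independent binary choice that yields the $2^k$ factor. The weight tracking follows because axis-edge weights in $G^\pm$ are exactly half of their $G$-weights, so each axis lozenge contributes correctly after the binary sum.

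The main obstacle is the precise case analysis, especially for matchings involving slanted lozenges that cross $L$ without being $\sigma$-fixed. Ciucu's approach handles this via a sign-reversing involution (equivalently, a Kasteleyn-matrix block decomposition exploiting $\sigma$-equivariance) that pairs non-symmetric cross-axis configurations against each other to cancel, leaving only configurations that split cleanly into $G^\pm$. I would follow this template, verifying the bijection and the resulting weight identity step by step for the triangular-lattice setting described here.
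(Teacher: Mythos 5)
A preliminary remark: the paper does not prove Theorem \ref{ciucu} at all --- it is stated as a special case of Ciucu's Factorization Theorem and simply cited, so there is no in-paper argument to compare yours against. Judged on its own, your reconstruction gets the setup right (dual graph, reflection $\sigma$, fixed vertices and fixed edges on $L$, halved axis weights), but the two steps that carry the actual content are, respectively, unproved and wrong.

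The unproved step is the ``weight-preserving $2^k$-to-$1$ correspondence'' in which ``each of the $k$ axis pairs contributes an independent binary choice.'' As an unweighted statement this is already false: for $R=H_{2,1,1}$ one has $k=1$, $\M(R)=3$, $\M(R^-)=1$, and $R^+$ has exactly two tilings, so no $2$-to-$1$ map onto pairs exists. The true multiplicity of a pair $(\mu^-,\mu^+)$ is $2^{k-j}$, where $j$ is the number of vertical lozenges on $L$ that it uses, and the halved weights exist precisely so that $\sum 2^{k-j}\left(\tfrac12\right)^{j}(\cdots)$ assembles into $2^k\,\M(R^-)\,\M(R^+)$. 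Establishing that multiplicity is the heart of Ciucu's proof: one must control how the $2k$ axis triangles can be matched in an arbitrary tiling (a balance argument forces equally many up- and down-pointing axis triangles to be matched to each side, and the chains of lozenges attached to ``wrong-side'' axis triangles can then be reflected across $L$ independently), and this is exactly where the balance hypothesis and the prescribed assignment of the down-pointing $a_i$'s and up-pointing $b_i$'s to $R^-$ enter. Your sketch never engages with this. The wrong step is your resolution of the ``main obstacle'': Ciucu's argument uses no sign-reversing involution and no Kasteleyn-matrix block decomposition; the identity is a positive weighted identity obtained from a direct many-to-one correspondence, with nothing to cancel. Moreover, the configurations you propose to cancel do not exist: no lattice edge is vertical, so an edge shared by two triangles and meeting the vertical axis $L$ forces both triangles to be bisected by $L$; hence every lozenge either lies strictly on one side of $L$, is a vertical lozenge on $L$, or covers exactly one axis triangle. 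There are no ``slanted lozenges that cross $L$ without being $\sigma$-fixed'' of the kind your cancellation is meant to handle, and introducing signs into what is a manifestly positive count is the wrong tool.
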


For example when Theorem \ref{ciucu} is applied to $H_{a,b,b,2n,(u_i)_1^n,(u_i)_1^n}$, $R^-$ is  $V_{a,b,n, (u_i)_1^n}$ and $R^+$ is congruent to $\overline V^+_{a,b+1,n, (u_i+1)_1^n}$. 

The author proved in \cite{Co} the following results for dented hexagons.

\begin{theorem}\labelnote{Condon}
    Let a balanced dented hexagon $H_{a,b,c,t,\vec u, \vec v}$ be given. Let $L_N$ be the Nth horizontal lattice line south of the northern side\footnote{$L_N$ is then $\frac{\sqrt3}{2}N$ units south of the northern side of $H_{a,b,c,t,\vec u, \vec v}$.} of  $H_{a,b,c,t,\vec u, \vec v}$, and $\mu_N$ be the number of dents north of $L_N$. Then $H$ has a tiling iff $\mu_N \leq N$ for all $N \in \mathbb N$.
\end{theorem}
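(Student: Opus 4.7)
The plan is to prove necessity via the Region Splitting Lemma (Corollary~\ref{splitting}) at each horizontal lattice line, and to prove sufficiency by either induction on the dent count or an explicit non-intersecting lattice path construction.

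For necessity, fix $N \in \mathbb{N}$ and let $P$ denote the subregion of $H$ strictly north of $L_N$. Every triangle in $P$ adjacent to $L_N$ has its horizontal base on $L_N$ and is therefore up-pointing, so Corollary~\ref{splitting} applies. If $w_i$ denotes the length of $L_i \cap H_{a,b,c,t}$ in the undented hexagon, then the row between $L_{i-1}$ and $L_i$ contains $w_{i-1}$ down-pointing and $w_i$ up-pointing triangles, so telescoping across the top $N$ rows yields (in the undented case) $\#\mathrm{up}(P) - \#\mathrm{down}(P) = w_N - w_0$. A direct geometric calculation gives $w_N - w_0 = N$ for $0 \leq N \leq \min(b+t, c+t)$ (the critical range, where both the NE and NW sides widen the hexagon by $1/2$ per row); for larger $N$ the balance condition $m + n = t$ gives $\mu_N \leq t \leq w_N - w_0$ automatically. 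Every dent is an up-pointing triangle, so removing the $\mu_N$ dents above $L_N$ reduces the up-count by $\mu_N$, yielding $n_N := \#\mathrm{up}(P) - \#\mathrm{down}(P) = N - \mu_N$ in the critical range. Corollary~\ref{splitting} requires $n_N \geq 0$, giving $\mu_N \leq N$.

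For sufficiency, assume $\mu_N \leq N$ for all $N$ and balance. One approach uses the Lindström--Gessel--Viennot bijection from Section~\ref{background}: tilings of $H$ correspond to families of non-intersecting lattice paths with sources on the SW boundary and sinks on the NE boundary of the dual region, and each dent shifts one endpoint inward. The condition $\mu_N \leq N$ translates precisely to a ballot-type condition on the endpoint data, guaranteeing that the lexicographically leftmost candidate family is non-intersecting and hence exhibits an explicit tiling. An alternative self-contained approach proceeds by induction on $m+n$: the base case $m+n=0$ forces $t=0$ by balance and reduces $H$ to $H_{a,b,c}$, tileable by MacMahon; the inductive step examines the topmost dent to identify either a forced local lozenge chain running down from it, or a horizontal line $L_N$ with $\mu_N = N$ (which splits $H$ into independent smaller subregions via Corollary~\ref{splitting}). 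The main obstacle will be the sufficiency direction --- either translating the region-level ballot condition $\mu_N \leq N$ to the path-endpoint ballot condition, or verifying that the inductive subregions retain the dented-hexagon form with the hypothesis intact.
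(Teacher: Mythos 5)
First, a point of reference: this paper does not actually prove Theorem \ref{Condon} --- it is quoted from the author's earlier paper \cite{Co} --- so there is no in-paper argument to measure yours against, and your proposal has to stand on its own. Your necessity half does stand. Taking $P$ to be the part of $H$ strictly north of $L_N$, the triangles of $P$ along $L_N$ are indeed all up-pointing, every dent is an up-pointing triangle, and the telescoping count $\#\mathrm{up}(P)-\#\mathrm{down}(P)=(w_N-w_0)-\mu_N$ together with $w_N-w_0=N$ for $N\le\min(b+t,c+t)$ (and $w_N-w_0\ge t\ge\mu_N$ beyond that range, using balance $m+n=t$) is correct, so Corollary \ref{splitting} forces $\mu_N\le N$ whenever a tiling exists. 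That direction is complete.

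The sufficiency direction, however, is a plan rather than a proof, and you flag this yourself. Neither proposed route is carried far enough to see that it closes. For the lattice-path route, the assertion that ``$\mu_N\le N$ translates precisely to a ballot-type condition on the endpoint data'' and that this guarantees a nonintersecting family \emph{is} the theorem; it is stated, not derived. Note also that for the full hexagon the NW dents and NE dents sit at opposite ends of the path system (one set perturbs sources, the other sinks), so the translation is not symmetric and not a one-line computation. For the inductive route, the claimed dichotomy --- either a forced lozenge chain descends from the topmost dent, or some $L_N$ has $\mu_N=N$ --- is not established and fails as stated: a balanced hexagon with $t=1$ and a single dent at $u_1=5$ has $\mu_N<N$ for every $N\ge 1$ and forces no lozenges, so neither branch applies and the induction has nowhere to go. Even when a splitting line with $0<\mu_N=N<m+n$ does exist, you would still have to verify that both pieces are balanced regions of the same family with the hypothesis intact (the top piece is a dented trapezoid, i.e.\ a degenerate dented hexagon, only for $N\le\min(b+t,c+t)$). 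Until one of these routes is executed in detail, only the ``only if'' half of the theorem is proved.
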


\begin{theorem}\labelnote{paper1}
For $H_{a,b,c,m+n,(u_i)_1^m,(v_j)_1^n}$ a dented hexagon,
\begin{align*}
     {\M(H_{a,b,c,m+n,(u_i)_1^m, (v_j)_1^n})} &= {\M(H_{0,b,c,m+n,(u_i)_1^m, (v_j)_1^n})} \prod_{i=1}^m(u_i)_{\underline{u_i}} \prod_{j=1}^n(v_j)_{\underline{v_j}}\\
     & \times \frac{\p({a,b+n, c+m})}{\prod_{i=1}^m(a+u_i)_{\underline{u_i}}\prod_{j=1}^n(a+v_j)_{\underline{v_j}}}.
\end{align*}
\end{theorem}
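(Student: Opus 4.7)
The plan is to apply the Lindström--Gessel--Viennot (LGV) bijection to express $\M(H_{a,b,c,m+n,\vec u,\vec v})$ as a determinant of a path matrix with binomial entries, and then to use the Cauchy--Binet formula (Theorem \ref{CauchyBinet}) to factor this matrix in a way that isolates the dependence on $a$. This is in the spirit of the ``stretching'' technique that has been fruitful in related work on simple tiling-ratio identities.

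First I would fix an LGV configuration in which sources lie along the southwest boundary together with the northwest dents, and sinks lie along the southeast boundary together with the northeast dents. After the standard linear transformation to the square lattice described in Section 2, $\M(H_{a,b,c,m+n,\vec u,\vec v})$ becomes the determinant of a path matrix whose entries are binomial coefficients depending on $a$, the side lengths, and the dent positions $u_i, v_j$.

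Next I would split the path region along a vertical line that separates an ``unstretched'' subregion, which captures the LGV combinatorics of $H_{0,b,c,m+n,\vec u,\vec v}$, from a parallelogram strip of horizontal width $a$. Every LGV path crosses the cut at a unique lattice point, so the path matrix factors as $A\cdot B(a)$, where $A$ records paths from each source to each crossing point and $B(a)$ records paths through the strip to each sink. Applying Theorem \ref{CauchyBinet} writes $\det(AB(a))$ as a sum of products of minors of $A$ and $B(a)$ over all choices of crossing subsets.

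The decisive step is the evaluation of the Cauchy--Binet sum. The minors of $B(a)$ are binomial-coefficient determinants arising from paths through a parallelogram; a Vandermonde-type identity should collapse the sum to a single product. I expect this product to factor as $\p(a,b+n,c+m)$ (from the bulk of the parallelogram) divided by $\prod_i(a+u_i)_{\underline{u_i}}\prod_j(a+v_j)_{\underline{v_j}}$ (from the shifts required at sinks that correspond to dents), multiplied by $\det A$, which by a parallel argument evaluates to $\M(H_{0,b,c,m+n,\vec u,\vec v})\cdot\prod_i(u_i)_{\underline{u_i}}\prod_j(v_j)_{\underline{v_j}}$. Verifying the $a=0$ base case, where $\p(0,b+n,c+m)=1$ and both sides collapse to the same quantity, pins down the normalization. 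The main obstacle is precisely this step: carrying out the binomial Vandermonde evaluation cleanly, matching up the indices of the dents $u_i, v_j$ with positions on the cut, and identifying the resulting expression with the claimed Pochhammer and MacMahon factors.
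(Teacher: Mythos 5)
Your plan stalls exactly at the step you flag as decisive, and the gap is structural rather than computational. After you cut along a vertical line and write the path matrix as $A\cdot B(a)$, Cauchy--Binet gives $\sum_{S}\det(A_{[\cdot],S})\det(B(a)_{S,[\cdot]})$, a sum over all crossing sets $S$. For the claimed factorization you would need this sum to split as (an $a$-independent factor) times $\p(a,b+n,c+m)/\prod_i(a+u_i)_{\underline{u_i}}\prod_j(a+v_j)_{\underline{v_j}}$, which would require the $a$-dependence of $\det(B(a)_{S,[\cdot]})$ to be the same for every $S$. It is not: each such minor is a polynomial in $a$ whose degree equals the total number of northeast steps of the corresponding path family, and this varies with the heights of the points in $S$. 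So no Vandermonde-type identity can collapse the sum termwise; the only specialization that kills all but one term is $a=0$ (where the strip has width zero and a single $S$ survives), and that alone does not propagate to general $a$. There are also two smaller but real problems with the bookkeeping: $A$ is rectangular, so ``$\det A$'' is not defined and only its maximal minors enter; and the piece of the region west of a vertical cut is not $H_{0,b,c,m+n,\vec u,\vec v}$ (its eastern boundary is the straight cut, not the dented northeast side), so identifying the $a$-independent factor with $\M(H_{0,b,c,m+n,\vec u,\vec v})\prod_i(u_i)_{\underline{u_i}}\prod_j(v_j)_{\underline{v_j}}$ is unjustified as stated. In short, the identity you hope the Cauchy--Binet sum satisfies is essentially the theorem itself, and the proposal supplies no mechanism for it.

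For comparison: the present paper does not prove Theorem \ref{paper1} at all --- it imports it from \cite{Co} --- and where it proves the analogous statement for dented half-hexagons (Theorem \ref{main}), the method is Kuo's graphical condensation (Lemma \ref{kuo}) with induction on the number of dents, with the lattice-path determinant used only to establish that the tiling function is a polynomial in $a$. Tellingly, the one place this paper does use Cauchy--Binet (Theorem \ref{relationgeneral}) is precisely to expand a stretched region's tiling function as a genuine \emph{linear combination} of dented half-hexagon tiling functions indexed by the crossing set, not as a product --- which is exactly the behavior that blocks the collapse your argument needs. If you want to salvage a determinant-based route, you would have to either run an induction (condensation) as in Section \ref{mainproof}, or exploit polynomiality in $a$ together with enough information about degrees and zeros to pin the polynomial down; the Cauchy--Binet expansion by itself will not do it.
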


This is equivalent to the following:

\begin{corollary}\labelnote{Condon3}
    For $H_{a,b,c,m+n,(u_i)_1^m,(v_j)_1^n}$ a dented hexagon, there exists some $k:=k(b,c,(u_i)_1^m,(v_j)_1^n)$ independent of $a$ so that
    $$ \M(H_{a,b,c,m+n,(u_i)_1^m, (v_j)_1^n}) = k \cdot \frac{\p({a,b+n, c+m})}{\prod_{i=1}^m(a+u_i)_{\underline{u_i}}\prod_{j=1}^n(a+v_j)_{\underline{v_j}}}.$$
\end{corollary}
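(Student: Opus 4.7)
The plan is to prove the two statements are equivalent by exhibiting the constant $k$ explicitly and verifying the equivalence through specialization at $a=0$. Given Theorem \ref{paper1}, I would simply group the factors on the right-hand side that do not depend on $a$ into a single constant. Specifically, I would set
$$k := \M(H_{0,b,c,m+n,(u_i)_1^m, (v_j)_1^n}) \prod_{i=1}^m (u_i)_{\underline{u_i}} \prod_{j=1}^n (v_j)_{\underline{v_j}},$$
which manifestly depends only on $b$, $c$, $(u_i)_1^m$, and $(v_j)_1^n$. Substituting this into the right-hand side of Theorem \ref{paper1} immediately yields the expression in Corollary \ref{Condon3}.

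For the converse direction, I would evaluate both sides of the corollary's displayed formula at $a=0$ to recover the claimed constant. By MacMahon's product formula the outermost product defining $\p(a,b+n,c+m)$ is empty when $a=0$, so $\p(0,b+n,c+m)=1$. Moreover, the Pochhammer factors in the denominator specialize to $(0+u_i)_{\underline{u_i}} = (u_i)_{\underline{u_i}}$ and analogously for the $v_j$. Solving the resulting equation for $k$ recovers exactly the expression above; substituting this value back into the corollary then reproduces Theorem \ref{paper1}, completing the equivalence.

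There is no real obstacle here: the corollary is just a rearrangement of Theorem \ref{paper1} that isolates the $a$-dependent behavior of $\M(H_{a,b,c,m+n,(u_i)_1^m, (v_j)_1^n})$. The only minor point requiring care is the evaluation $\p(0,b+n,c+m)=1$ as an empty product, together with the clean specialization of the Pochhammer symbols at $a=0$. This reformulation is useful because it highlights the simple rational dependence on $a$ — namely a single MacMahon factor in the numerator and a product of Pochhammer factors in the denominator — which is the form most convenient for comparing dented hexagons with different values of $a$ in later arguments.
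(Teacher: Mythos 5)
Your proposal is correct and matches the paper's (implicit) argument: the paper states Corollary \ref{Condon3} as an immediate rearrangement of Theorem \ref{paper1}, with $k = \M(H_{0,b,c,m+n,(u_i)_1^m,(v_j)_1^n})\prod_{i=1}^m (u_i)_{\underline{u_i}}\prod_{j=1}^n (v_j)_{\underline{v_j}}$, exactly as you write. Your additional specialization at $a=0$ (using $\p(0,b+n,c+m)=1$ and $(0+u_i)_{\underline{u_i}}=(u_i)_{\underline{u_i}}$) correctly establishes the converse direction of the claimed equivalence and is a fine, if routine, supplement.
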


\section{Main Results}
\labelnote{results}

In this section we describe the main results of this paper, and where proofs can be found.  The following results about tileability can be found in Section \ref{tileable}.

\begin{proposition}\labelnote{SymmetricTileability}
    The dented hexagon $H_{a,b,b,2n,(u_i)_1^n,(u_i)_1^n}$ has vertically symmetric tilings iff $a$ is even and $2i \leq u_i$ for all $i \in [n]$. 
\end{proposition}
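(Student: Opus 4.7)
The plan is to handle the two conditions of the proposition separately. First I will show $a$ must be even. If $a$ is odd, the vertical axis of symmetry $L$ of $H$ passes through the midpoint of a unit horizontal edge on the northern boundary, and so cuts through the interior of the down-pointing triangle $\delta$ whose top edge is that middle edge. Since $\delta$ is itself symmetric about $L$, any vertically symmetric tiling must cover $\delta$ by a lozenge invariant under reflection across $L$. Only ``vertical'' lozenges (those with a horizontal shared edge and vertical long axis) centered on $L$ have this property, and the only such lozenge containing $\delta$ would pair it with the up-pointing triangle immediately above its top edge---which is not in $H$. So no vertically symmetric tiling exists in this case.

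For $a$ even, $L$ is a symmetry axis of the triangular lattice, and vertically symmetric tilings of $H$ are in bijection with tilings of the half-region $V := V_{a/2, b, n, (u_i)_1^n}$: the on-axis triangles alternate up-pointing ($a_i$) and down-pointing ($b_i$) from the top, and in any symmetric tiling each $a_i$ must be covered by a lozenge invariant under reflection, which forces the axis-centered vertical lozenge pairing $a_i$ with $b_i$; the remaining tiles split into mirror pairs across $L$. It thus suffices to show $V$ is tileable iff $u_i \geq 2i$ for all $i$. I would first note that $H$ is balanced: the undented hexagon $H_{a,b,b,2n}$ has up-minus-down count equal to $2n$ (from the general identity that $H_{a,b,c,t}$ has imbalance $t$, which one can verify by direct counting), and each of the $2n$ dents removes an up-pointing triangle (all triangles inside $H$ adjacent to the NE or NW border are up-pointing, as one sees by examining the first couple of such triangles and propagating down the border). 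Theorem \ref{Condon} then applies: $H$ is tileable iff $\mu_N \leq N$ for all $N$, and in our symmetric configuration the $i$-th dent on each of the NE and NW sides lies in row $u_i$ from the top, so $\mu_N = 2\,|\{ i : u_i \leq N\}|$. This condition is equivalent to $u_i \geq 2i$ for all $i$: setting $N = u_i$ yields $2i \leq u_i$, and conversely this bound forces $\mu_N \leq N$ since the $u_i$ are strictly increasing.

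To conclude, it remains to show $V$ is tileable iff $H$ is. One direction is immediate: reflecting a tiling of $V$ across $L$ and combining with the original gives a tiling of $H$. For the reverse, I would apply Ciucu's Factorization Theorem (Theorem \ref{ciucu}) to $H$: since the axis triangles alternate with the top one up-pointing for $a$ even, the Ciucu decomposition has $R^- = V$ exactly, and so $\M(H) = 2^{b+n} \M(V) \M(\overline V^+)$; hence $\M(H) > 0$ forces $\M(V) > 0$. The main technical obstacle is this last step: without the Factorization Theorem, the direction ``$H$ tileable $\Rightarrow$ $V$ tileable'' would require a direct combinatorial construction of a tiling of $V$ from one of $H$.
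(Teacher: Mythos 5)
Your proposal is correct and follows essentially the same route as the paper: the odd case is ruled out by the on-axis down-pointing triangle at the northern boundary, and the even case combines Theorem \thref{Condon} (translating $\mu_N \leq N$ into $2i \leq u_i$) with Ciucu's Factorization Theorem to get the hard implication ``$H$ tileable $\Rightarrow$ a symmetric tiling exists.'' The paper merely packages that last step as the standalone Lemma \thref{CiucuCorr}, whose proof is exactly your Factorization-Theorem argument, so there is no substantive difference.
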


\begin{corollary}\labelnote{Vexistence}
    The region $V_{a,b,n,(u_i)_1^n}$ is tileable iff $2i \leq u_i$ for all $i \in [n]$. 
\end{corollary}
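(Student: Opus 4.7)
The plan is to derive this corollary directly from Proposition \ref{SymmetricTileability} via the defining bijection between tilings of the half-hexagon $V$ and vertically symmetric tilings of the full hexagon $H$.

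First, I would recall from the definition in the introduction that $V_{a,b,n,(u_i)_1^n}$ consists exactly of the triangles lying strictly west of the vertical symmetry axis of the dented hexagon $H_{2a,b,b,2n,(u_i)_1^n,(u_i)_1^n}$. Since every vertically symmetric tiling of $H$ is determined by its restriction to the western half (the lozenges crossing the axis must be vertical and symmetrically placed, and the lozenges east of the axis are forced to be the reflections of those west of it), there is a bijection between tilings of $V_{a,b,n,(u_i)_1^n}$ and vertically symmetric tilings of $H_{2a,b,b,2n,(u_i)_1^n,(u_i)_1^n}$. In particular, $V_{a,b,n,(u_i)_1^n}$ is tileable if and only if $H_{2a,b,b,2n,(u_i)_1^n,(u_i)_1^n}$ admits a vertically symmetric tiling.

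Next, I would observe that the first parameter $2a$ of this hexagon is automatically even. Therefore the ``$a$ is even'' clause in Proposition \ref{SymmetricTileability} is satisfied for free, and the proposition reduces to the statement that $H_{2a,b,b,2n,(u_i)_1^n,(u_i)_1^n}$ has a vertically symmetric tiling iff $2i \leq u_i$ for all $i \in [n]$. Combining this with the bijection in the previous step gives the corollary immediately.

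There is essentially no obstacle here: the corollary is a formal consequence of Proposition \ref{SymmetricTileability}. The only subtlety worth mentioning explicitly is verifying that the bijection between tilings of $V$ and symmetric tilings of $H$ is indeed a bijection of sets (not just a map), which follows from the standard observation that lozenges crossing the axis of a vertically symmetric tiling of $H$ must be horizontal half-lozenge pairs (i.e., vertical lozenges in our orientation convention), so the western half together with these axial lozenges reconstructs $V_{a,b,n,(u_i)_1^n}$ uniquely.
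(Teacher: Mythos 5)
Your proposal is correct and follows essentially the same route as the paper: it establishes the bijection between tilings of $V_{a,b,n,(u_i)_1^n}$ and vertically symmetric tilings of $H_{2a,b,b,2n,(u_i)_1^n,(u_i)_1^n}$ (using that axis triangles must be covered by vertical lozenges), and then invokes Proposition \thref{SymmetricTileability}, with the evenness hypothesis automatic since the horizontal side length is $2a$. No gaps worth noting.
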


\begin{corollary}\labelnote{easiness}
    Consider the dented half-hexagons $V_{a,b,n,(u_i)_1^n}$ and $V_{a',b',n,(u'_i)_1^n}$ with $u_i \leq u_i'$ for all $i \in [n]$. If $V_{a,b,n,(u_i)_1^n}$ is tileable, then so is $V_{a',b',n,(u'_i)_1^n}$.
\end{corollary}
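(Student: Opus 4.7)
The plan is to derive this as an immediate consequence of Corollary \ref{Vexistence}. That corollary characterizes tileability of $V_{a,b,n,(u_i)_1^n}$ by the single condition $2i \leq u_i$ for all $i \in [n]$, a condition which does not mention the parameters $a$ or $b$ at all. Once we observe this $a,b$-independence, the corollary becomes a purely combinatorial comparison on the dent-index sequences.

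Concretely, I would proceed in two short steps. First, apply Corollary \ref{Vexistence} to the region $V_{a,b,n,(u_i)_1^n}$: because it is tileable by hypothesis, we obtain $2i \leq u_i$ for every $i \in [n]$. Second, combine this with the componentwise hypothesis $u_i \leq u_i'$ to conclude $2i \leq u_i \leq u_i'$ for every $i \in [n]$. Applying the ``if'' direction of Corollary \ref{Vexistence} to the sequence $(u_i')_1^n$ then yields that $V_{a',b',n,(u_i')_1^n}$ is tileable, regardless of the values of $a'$ and $b'$ (assuming, of course, these yield a valid region, i.e., $u_i' \leq b'+n$).

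Since all the substantive work is already contained in Corollary \ref{Vexistence}, there is no real obstacle here; the only ``hard part'' is recognizing that the tileability criterion established earlier is completely insensitive to $a$ and $b$. In particular, no separate argument is required to track how enlarging the hexagon or shifting the dents outward interacts with a given tiling. It is worth noting that one could alternatively try to give a direct ``monotonicity'' proof by modifying a tiling of $V_{a,b,n,(u_i)_1^n}$ into a tiling of $V_{a',b',n,(u_i')_1^n}$ lozenge by lozenge, but this would be substantially more involved than simply invoking the tileability criterion twice.
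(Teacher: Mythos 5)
Your proposal is correct and is essentially identical to the paper's proof, which simply notes that tileability of $V_{a,b,n,(u_i)_1^n}$ gives $2i \leq u_i \leq u_i'$ for each $i$ and then invokes the criterion of Corollary \ref{Vexistence} for the primed region. No differences worth noting.
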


Next, we give the tiling function for a dented half hexagon, with proof in Section \ref{mainproof}.
We use $\f_{b,n,(u_i)_1^n}(a)$ to denote the tiling function for the family of dented half-hexagons $\{V_{a,b,n,(u_i)_1^n} \ |\  a \in \mathbb N\}$. 

\begin{theorem}\labelnote{main}
    For $V_{a,b,n,(u_i)_1^n}$ a dented half-hexagon, $\f_{b,n,(u_i)_1^n}(a)$ is a polynomial in $a$, satisfying
    \begin{equation}\labelnote{eq:dentsfunction}
        \f_{b,n,(u_i)_1^n}(a) = f_{b,n,(u_i)_1^n}(0) \cdot \prod_{i=1}^n (u_i)_{\underline{u_i}} \cdot \dfrac{\p_{|}(a,b+n)}{\prod_{i=1}^n (2a+u_i)_{\underline{u_i}}}.
    \end{equation}
\end{theorem}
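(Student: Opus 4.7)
The plan is to derive the formula by combining Ciucu's Factorization Theorem (Theorem \ref{ciucu}) with the dented-hexagon tiling count from Corollary \ref{Condon3}, closing the loop with an independent evaluation of the companion weighted region $\overline V^+$ arising on the east side of the axis.

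First I reduce to the symmetric tiling count on the full hexagon $H := H_{2a, b, b, 2n, (u_i), (u_i)}$. Any vertically symmetric tiling of $H$ must cover each axis triangle by a vertical lozenge (since a non-vertical axis-covering lozenge cannot equal its own mirror image), so such tilings biject with tilings of $V_{a,b,n,(u_i)}$, giving $\f(a) = \M_|(H)$. Applying the Factorization Theorem to $H$ yields $\M(H) = 2^{b+n} \f(a) \cdot \M(\overline V^+)$, where the exponent $b+n$ (independent of $a$) counts the pairs of axis triangles and $\overline V^+$ is the weighted companion region described in the excerpt. On the other hand, Corollary \ref{Condon3} gives $\M(H) = k(b,n,(u_i)) \cdot \p(2a, b+n, b+n) / \bigl(\prod_i (2a + u_i)_{\underline{u_i}}\bigr)^2$ for some $k$ independent of $a$, and Ciucu's factorization $\p(2a, b+n, b+n) = \p_-(a, b+n) \cdot \p_|(a, b+n)$ splits the MacMahon factor. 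Equating the two expressions for $\M(H)$ gives the key identity
$$\f(a) \cdot \M(\overline V^+) = \frac{k(b,n,(u_i))}{2^{b+n}} \cdot \frac{\p_-(a, b+n) \cdot \p_|(a, b+n)}{\bigl(\prod_{i=1}^n (2a + u_i)_{\underline{u_i}}\bigr)^2}. \qquad (\ast)$$

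To isolate $\f(a)$ in $(\ast)$, I need a companion identity $\M(\overline V^+) = C(b, n, (u_i)) \cdot \p_-(a, b+n) / \prod_i (2a+u_i)_{\underline{u_i}}$ for some $C$ independent of $a$. Substituting this into $(\ast)$ produces $\f(a)$ in the claimed form, with the prefactor identified as $\f(0) \prod_i (u_i)_{\underline{u_i}}$ by setting $a = 0$ (and using $\p_|(0, b+n) = 1$). Polynomiality of $\f(a)$ in $a$ is immediate from its LGV determinantal representation described in Section \ref{background}, whose entries are binomial coefficients polynomial in $a$.

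The main obstacle is the companion identity for $\M(\overline V^+)$, a parallel theorem for the weighted region that is not obviously simpler than the original. The $1/2$-weights on vertical lozenges along the axis complicate the standard LGV bijection, which assumes unit weights; I would attack this either by converting the weighted LGV determinant via Cauchy--Binet (Theorem \ref{CauchyBinet}) into a sum over intermediate path configurations that factors the ``body'' and ``tube'' contributions, or by inducting on the number of dents $n$ via Kuo condensation (Lemma \ref{kuo}), with the $n = 0$ base case supplied by applying the Factorization Theorem to the undented hexagon $H_{2a, b, b}$ and invoking the classical formulas for $\p_-(a, b)$ and $\p_|(a, b)$.
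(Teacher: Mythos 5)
Your overall architecture is circular as it stands, or at best reduces the theorem to an unproven statement of equal or greater difficulty. The identity $(\ast)$ you derive from the Factorization Theorem and Corollary \ref{Condon3} is exactly equation \eqref{factors} combined with Theorem \ref{paper1} in the paper, and it is indeed how the paper obtains the formula for $\M(\overline V^+_{a,b,n,(u_i)_1^n})$ --- but in the paper that computation is Corollary \ref{mainplus}, and it is a \emph{consequence} of Theorem \ref{main}: one divides the known $\M(H)$ by the known $\f(a)$ to learn $\overline f^+(a)$. You propose to run this division in the other direction, which requires an independent proof of the companion identity $\M(\overline V^+) \aequiv \p_-(a,b+n)/\prod_i(2a+u_i)_{\underline{u_i}}$. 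You correctly flag this as ``the main obstacle,'' but you do not supply the proof, and this obstacle \emph{is} the theorem: the weighted region $\overline V^+$ is no more tractable than $V$ itself (the $1/2$-weights break the clean forced-lozenge reductions and complicate the path-matrix entries, as the paper's proof of Theorem \ref{relation} shows when it must split each entry as $\binom{a+k_1+k_2-1}{k_2}+\frac12\binom{a+k_1+k_2-1}{k_2-1}$). Everything else in your write-up --- the bijection between symmetric tilings of $H$ and tilings of $V$, polynomiality via the Lindstr\"om--Gessel--Viennot determinant, and the normalization at $a=0$ using $\p_|(0,b+n)=1$ --- is correct and matches the paper, but it is the easy part.

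The paper avoids this by proving Theorem \ref{main} directly with a double induction using Kuo condensation on the \emph{unweighted} regions: an outer induction on the number of dents $n$ using identity \eqref{eqn} applied to $V_{a,b,n,(u_i)_2^{n-1}}$ (the region with first and last dents filled in), and, for the base cases $n\le 1$, an inner induction on the dent position $u$ using identity \eqref{eqn2}, with the boundary cases $u_1=2$ and $u_n=b+2n$ handled by forced-lozenge reductions (Lemma \ref{case}). If you want to salvage your approach, your fallback plan of proving the companion identity by Kuo condensation on $\overline V^+$ with the $n=0$ base case $\M(\overline V^+_{a,b,0,\vec\emptyset})=\p_-(a,b)/2^b$ is not obviously doomed, but you would need to verify that Kuo condensation applies to the weighted region, that the removals reduce within the family $\overline V^+$, and you would need the analogue of the inner induction on $u$ for the one-dent base case; none of that is present. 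As written, the proposal is a correct reduction plus an honest acknowledgment that the reduced problem is unsolved, not a proof.
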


Lastly, in Section \ref{relationproof}, we prove the following results regarding a relationship between the tiling functions of symmetrizable regions and their complements in the symmetrization. We use $f^+_{b,n,(u_i)_1^n}(a)$ to denote the tiling function for the family $\{V^+_{a,b,n,(u_i)_1^n} \ |\  a \in \mathbb N\}$, and $\overline f^+_{b,n,(u_i)_1^n}(a)$ to denote the tiling function for the weighted family $\{\overline V^+_{a,b,n,(u_i)_1^n} \ |\  a \in \mathbb N\}$. We will show
\begin{corollary}\labelnote{mainplus}
	    For $V_{a,b,n,(u_i)_1^n}$ a dented half-hexagon, $f^+_{b,n,(u_i)_1^n}(a)$ is a polynomial in $a$, satisfying
    \begin{equation}
        \overline f^+_{b,n,(u_i)_1^n}(a) =  \overline  f^+_{b,n,(u_i)_1^n}(0) \cdot \prod_{i=1}^n (u_i)_{\underline{u_i}} \cdot \dfrac{\p_{-}(a,b+n)}{\prod_{i=1}^n (2a+u_i)_{\underline{u_i}}}.
    \end{equation}
\end{corollary}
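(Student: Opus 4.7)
The natural approach is to derive Corollary \ref{mainplus} from Theorem \ref{main} and Theorem \ref{paper1} by applying Ciucu's Factorization Theorem (Theorem \ref{ciucu}) to the vertically symmetric dented hexagon $H:=H_{2a,b,b,2n,(u_i)_1^n,(u_i)_1^n}$. This gives an identity
$$\M(H) = 2^k\,\M(V_{a,b,n,(u_i)_1^n})\cdot\M(\overline V^+_{a,b,n,(u_i)_1^n})$$
for some $k$ depending on $b,n$ but not $a$, upon identifying $R^-$ with $V$ and $R^+$ with $\overline V^+$ as in the example following Theorem \ref{ciucu}. Solving for $\M(\overline V^+)$ thus reduces the problem to computing the ratio $\M(H)/\M(V)$.

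Theorem \ref{paper1} (or equivalently Corollary \ref{Condon3}) expresses $\M(H)$ as a constant in $a$ times $\p(2a,b+n,b+n)/\prod_i (2a+u_i)_{\underline{u_i}}^2$, and Theorem \ref{main} gives $\M(V)$ as a constant in $a$ times $\p_{|}(a,b+n)/\prod_i (2a+u_i)_{\underline{u_i}}$. Their quotient reduces to a constant times
$$\frac{\p(2a,b+n,b+n)}{\p_{|}(a,b+n)\,\prod_i (2a+u_i)_{\underline{u_i}}}.$$
The key simplification is the Ciucu identity $\p(2a,b,b)=\p_{-}(a,b)\cdot\p_{|}(a,b)$ (which is what the product formulas in the introduction yield, as is easily checked in small cases), collapsing the expression to a constant times $\p_{-}(a,b+n)/\prod_i (2a+u_i)_{\underline{u_i}}$. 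Evaluating at $a=0$, where $\p_{-}(0,m)=1$, pins down the constant as $\overline f^+_{b,n,(u_i)_1^n}(0)$, yielding exactly the formula claimed.

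The main obstacle is index-bookkeeping in Theorem \ref{ciucu}: the example after that theorem describes $R^+$ as $\overline V^+_{a,b+1,n,(u_i+1)_1^n}$ rather than $\overline V^+_{a,b,n,(u_i)_1^n}$, so one must either reparametrize or verify geometrically (by tracking forced lozenges along the axial boundary) that the shifted indexing computes the same weighted tiling count. A secondary concern is polynomiality of $\overline f^+_{b,n,(u_i)_1^n}(a)$ in $a$: the formula above is a priori a rational function, so one must check that $\prod_i (2a+u_i)_{\underline{u_i}}$ divides $\p_{-}(a,b+n)$ as polynomials in $a$. This divisibility follows from the analogous polynomiality assertions in Theorem \ref{main} (for $\p_{|}$) and Corollary \ref{Condon3} (for $\p$), combined with the factorization $\p=\p_{-}\cdot\p_{|}$.
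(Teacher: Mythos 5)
Your proposal takes essentially the same route as the paper's proof: apply Ciucu's Factorization Theorem to $H_{2a,b,b,2n,(u_i)_1^n,(u_i)_1^n}$ to get $\M(H)=2^{b+n}\,f_{b,n,(u_i)_1^n}(a)\,\overline f^+_{b,n,(u_i)_1^n}(a)$, substitute Theorem \ref{paper1} for $\M(H)$ and Theorem \ref{main} for $f$, cancel via $\p_-(a,b+n)\,\p_|(a,b+n)=\p(2a,b+n,b+n)$, and fix the constant by evaluating at $a=0$. The identification you flag as a bookkeeping worry is resolved exactly as you hope: in this application the paper takes $R^-=V_{a,b,n,(u_i)_1^n}$ and $R^+\cong\overline V^+_{a,b,n,(u_i)_1^n}$ (its Figure \ref{fig:partition2}), so your argument matches the paper's.
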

As $f^+$ and $\overline f^+$ are polynomials in $a$, they can be evaluated for real values of $a$ which are not integers. Theorem \ref{relation} is to be understood in that context.

\begin{theorem}\labelnote{relation}
    For $V_{a,b,n,(u_i)_1^n}$ a dented half-hexagon,
    $$\overline{f}^+_{b,n,(u_i)_1^n}(a) = f^+_{b,n,(u_i)_1^n}(a-1/2).$$
\end{theorem}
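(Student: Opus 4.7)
The plan is to prove Theorem~\ref{relation} by writing closed-form product formulas for both $\overline f^+(a)$ and $f^+(a)$, then checking that the substitution $a \mapsto a - 1/2$ carries the latter onto the former.

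For $\overline f^+$, the formula of Corollary~\ref{mainplus} comes from Ciucu's Factorization Theorem~\ref{ciucu}. Applied to $H_{2a,b-1,b-1,2n,(u_i-1)_1^n,(u_i-1)_1^n}$, whose east half is $\overline V^+_{a,b,n,(u_i)_1^n}$ (via the example following Theorem~\ref{ciucu} with $b$ and each $u_i$ shifted by $1$), it yields
\[
\M(H_{2a,b-1,b-1,2n,(u_i-1)_1^n,(u_i-1)_1^n}) = 2^{k} \, f_{b-1,n,(u_i-1)_1^n}(a) \, \overline f^+_{b,n,(u_i)_1^n}(a).
\]
Substituting Theorem~\ref{paper1} on the left and Theorem~\ref{main} on the right, and simplifying via the identity $\p_-(x,y)\,\p_|(x,y) = \p(x,y,y)$, solves for $\overline f^+$ in the form of Corollary~\ref{mainplus}.

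For $f^+$, no Factorization-Theorem shortcut is available since the unweighted $V^+$ is not the half of any symmetric region, so I would use the Lindstr\"om-Gessel-Viennot bijection directly. The tilings of $V^+_{a,b,n,(u_i)_1^n}$ correspond to nonintersecting families of $n$ northeast lattice paths on a subgraph of the directed square lattice, with sources determined by $(u_i)$ and sinks sitting at positions controlled by the half-integer tube length $a + 1/2$ in the tubey description $V^+_{a,b,n,(u_i)_1^n} = R_z(a+1/2)$. Writing $f^+(a) = \det M(a)$ for a path matrix with binomial-coefficient entries polynomial in $a$, then factoring $M(a)$ through a convenient vertical midline and applying Cauchy-Binet (Theorem~\ref{CauchyBinet}), should reduce the determinant to a sum of products of minors that evaluates in closed form to
\[
f^+_{b,n,(u_i)_1^n}(a) = C' \cdot \frac{\p_-(a+1/2, \, b+n)}{\prod_{i=1}^n (2a+1+u_i)_{\underline{u_i}}}
\]
for a constant $C'$ independent of $a$.

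Applying $a \mapsto a - 1/2$ in this expression reproduces the formula of Corollary~\ref{mainplus} up to matching the constants $C'$ and $\overline f^+(0) \cdot \prod_i (u_i)_{\underline{u_i}}$, which is a finite check at a single convenient $a$ (e.g.\ $a = 1/2$, at which $f^+(0)$ is a direct tile count). The main obstacle is this second step: the LGV determinant for $f^+(a)$ must be evaluated from first principles rather than read off from a symmetrization argument, and it is precisely there that the half-integer shift $a+1/2$ enters the formula, inherited from the half-integer tube length that characterizes $V^+$ as a tubey region.
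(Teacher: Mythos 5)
Your treatment of $\overline f^+$ via Corollary \ref{mainplus} matches the paper, but the other two steps do not hold up as written. For $f^+$ you do not need (and should not attempt) a from-scratch closed-form evaluation of an LGV determinant: the paper simply observes that $V^+_{a,b,n,(u_i)_1^n}$ is congruent to the dented half-hexagon $V_{a,b+1,n,(u_i+1)_1^n}$ with forced lozenges removed (Figure \ref{fig:partition2}, right), so Theorem \ref{main} immediately gives $f^+_{b,n,(u_i)_1^n}(a) \aequiv P_|(a,b+n+1)\big/\prod_{i=1}^n(2a+1+u_i)_{\underline{u_i}}$, and the elementary identity $P_|(a-1/2,b+1)\aequiv P_-(a,b)$ then yields the proportionality $\overline f^+_{b,n,(u_i)_1^n}(a)\aequiv f^+_{b,n,(u_i)_1^n}(a-1/2)$ (this is the paper's Lemma \ref{weakrelation}). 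A Cauchy--Binet factorization of the path matrix does not ``evaluate in closed form'' to a single product; in the paper it only ever produces a sum of $\binom{2h}{h}$ minors, and is reserved for the general tubey case of Theorem \ref{relationgeneral}.

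The genuine gap is your last step: ``matching the constants \dots is a finite check at a single convenient $a$'' is not a finite check. Both of your product formulas carry an undetermined multiplicative constant depending on $b,n,(u_i)_1^n$, so what must actually be verified is that for \emph{every} choice of these parameters the weighted count $\overline f^+(0)=\M(\overline V^+_{0,b,n,\vec u})$ and the unweighted count $f^+(0)=\M(V^+_{0,b,n,\vec u})$ stand in one specific explicit ratio; evaluating at $a=1/2$ merely restates that identity, since $\overline f^+(1/2)$ is a polynomial value at a half-integer and is not the tile count of any region you can enumerate directly. Pinning down this constant is the real content of the theorem, and the paper does it by a leading-coefficient argument: the path-matrix entries of $V^+$ and $\overline V^+$ are $\binom{a+k_1+k_2}{k_2}$ and $\binom{a+k_1+k_2-1}{k_2}+\frac 12\binom{a+k_1+k_2-1}{k_2-1}$ respectively, which agree in degree and leading coefficient as polynomials in $a$; every permutation product in either determinant has the same degree $b+n+\binom{b+n}{2}-\sum_i\underline{u_i}$, so the two determinants share a leading coefficient, and since $f^+(a)$ and $f^+(a-1/2)$ trivially share a leading coefficient, the proportionality constant is forced to equal $1$. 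Without this normalization step (or an equivalent one) your argument does not close.
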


Remarkably, Theorem \ref{relation} holds in much greater generality, as expressed in Theorem \ref{relationgeneral}.

\begin{theorem}\labelnote{relationgeneral}
    Given a family of tileable tubey regions $R_{z}(a)$, let $g(a)$ be the tiling function for $\{R_{z}(a+1/2): a \in \mathbb N\}$, and $\overline g(a)$ be the tiling function of $\{\overline R_{z}(a+1/2): a \in \mathbb N\}$. Then both $g(a)$ and $\overline g(a)$ are polynomials in $a$ and
    $$\overline g(a) = g(a-1/2).$$
\end{theorem}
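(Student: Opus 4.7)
The plan is to combine the Lindström--Gessel--Viennot path-counting bijection with Ciucu's Factorization Theorem, applied to a symmetric doubling of $R_{z}(a+1/2)$.

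First, I would establish polynomiality of $g(a)$ and $\overline g(a)$ via the lozenge/path bijection of Section~\ref{background}: tilings of $R_{z}(a+1/2)$ correspond to families of non-intersecting NE lattice paths from sources determined by $R$ to sinks on the east tube boundary, and since increasing $a$ by one shifts every sink uniformly east, each entry of the path matrix is a binomial coefficient polynomial in $a$, so $g(a)=\det(\text{path matrix})$ is a polynomial. Absorbing the $\tfrac{1}{2}$-weights on axis vertical lozenges into the weighted LGV path sums preserves polynomiality, so $\overline g(a)$ is polynomial too. Moreover, since the same path-matrix formula applies uniformly across all tube lengths $b\in\tfrac{1}{2}\mathbb{N}$, the function $b\mapsto M(R_{z}(b))$ is a \emph{single} polynomial in $b$, which implies that the polynomial extrapolation $g(a-1/2)$ equals $M(R_{z}(a))$ at integer $a$.

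Next, I would form the vertically symmetric region $S(a):=R_{z}(a+1/2)\cup\widetilde{R_{z}(a+1/2)}$ by gluing $R_{z}(a+1/2)$ to its reflection across the vertical axis $L$ through the center of its eastern zigzag. The on-axis triangles of $S(a)$ are exactly those of the strip $R_{z}(a+1/2)\setminus R_{z}(a)$, grouped into $k$ axis vertical-lozenge pairs. A parity check on Ciucu's labeling $a_1,b_1,\ldots,a_k,b_k$ places all of them in the ``$+$'' side of the partition, so Theorem~\ref{ciucu} yields
\[
M(S(a)) \;=\; 2^{k}\,M(R^{-})\,M(R^{+}) \;=\; 2^{k}\,M(R_{z}(a))\,M(\overline R_{z}(a+1/2)),
\]
with $R^{-}\cong R_{z}(a)$ and $R^{+}\cong\overline R_{z}(a+1/2)$.

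To complete the identity $\overline g(a)=g(a-1/2)$ it suffices to prove $M(\overline R_{z}(a+1/2))=M(R_{z}(a))$; for this I would compute $M(S(a))$ a second way. In any vertically symmetric tiling of $S(a)$, a slanted lozenge covering an on-axis triangle would, under reflection, doubly cover that triangle; hence every on-axis triangle is forced into an axis vertical lozenge, and the $k$ forced axis lozenges leave two mirror-image copies of $R_{z}(a)$, giving $M_{|}(S(a))=M(R_{z}(a))$. The main obstacle is then combining this with the Ciucu identity to extract $M(\overline R_{z}(a+1/2))=M(R_{z}(a))$; my approach is to re-examine the weighted LGV determinant for $\overline R_{z}(a+1/2)$ and show that the $\tfrac{1}{2}$-weights at the $k$ axis vertical lozenges correspond precisely to a $-\tfrac{1}{2}$ shift of the sink column in the path matrix, which converts the weighted path matrix of $\overline R_{z}(a+1/2)$ into the unweighted one of $R_{z}(a)$ and yields the desired equality of determinants.
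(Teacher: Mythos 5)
There is a genuine gap, and it is fatal: your argument hinges on the claim that $b\mapsto \M(R_z(b))$ is a \emph{single} polynomial over $b\in\tfrac12\mathbb N$, from which you conclude $g(a-1/2)=\M(R_z(a))$ and then reduce the theorem to the identity $\M(\overline R_z(a+1/2))=\M(R_z(a))$. Both the premise and the reduced identity are false. The regions $R_z(a)$ and $R_z(a+1/2)$ have differently shaped eastern boundaries (their sinks sit at different lattice positions), so they are governed by two different path matrices and two different polynomials. Concretely, take $R_z(a)=V_{a,1,0,\vec\emptyset}$ and $R_z(a+1/2)=V^+_{a,1,0,\vec\emptyset}$: then $\M(R_z(a))=\p_|(a,1)=1$ for all $a$, while $g(a)=\M(R_z(a+1/2))=\p_|(a,2)=a+1$, so no single polynomial interpolates both; moreover $\overline g(a)=g(a-1/2)=a+\tfrac12\neq 1=\M(R_z(a))$, so the identity you set out to prove in the second half is wrong. (One can also check via the Factorization Theorem: $\M(H_{2a,1,1})=2a+1=2\cdot 1\cdot(a+\tfrac12)$, confirming $\M(\overline V^+_{a,1,0})=a+\tfrac12$.) Your final technical step fails for the same reason: the weighted entry $\binom{m-1}{k}+\tfrac12\binom{m-1}{k-1}$ does \emph{not} equal the half-shifted unweighted entry $\binom{m-1/2}{k}$ once $k\ge 2$ (e.g.\ $k=2$ gives $(m-1)^2/2$ versus $(m-\tfrac12)(m-\tfrac32)/2$), so there is no entrywise conversion of $\overline N$ into the path matrix of $R_z(a)$. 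The symmetrization and the computation $\M_|(S(a))=\M(R_z(a))$ are correct but do not help, since no identity relates $\M(S)$ to $\M_|(S)$ in the way you would need.

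For comparison, the paper's proof does not attempt to relate $\overline R_z(a+1/2)$ to the integer-length region at all. It splits $R_z(a+1/2)$ along the southeast-directed lattice line at the top of $z$ into a fixed core $W$ and a growing tube $E(a)$; the Region Splitting Lemma (or Cauchy--Binet on the factored path matrix) gives $g(a)=\sum_v \M(W_v)\,f^+_{0,h,\vec v}(a-h)$ and $\overline g(a)=\sum_v \M(W_v)\,\overline f^+_{0,h,\vec v}(a-h)$, where each $E_v(a)$ is an (upside-down) dented half-hexagon. The theorem then follows termwise from Theorem \thref{relation}, $\overline f^+(a)=f^+(a-1/2)$, which is itself proved by combining a proportionality argument (via the Factorization Theorem and the explicit product formulas) with a comparison of \emph{leading coefficients} of the two path-matrix determinants --- precisely because the matrices agree only to leading order, not entrywise. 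If you want to salvage your approach, the missing ingredient is this reduction to the half-hexagon case together with an argument that works at the level of determinants (or leading coefficients) rather than matrix entries.
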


A similar phenomenon was observed by Ciucu and Fischer, see Remark 1 at the end of Section 2 of \cite{Ci13}.

\section{Proof of Tileability Results}
\labelnote{tileable}

In this section we show that vertically symmetric regions are tileable iff they have tilings which are themselves vertically symmetric. In particular, tileability conditions for dented half-hexagons are equivalent to tileability conditions for dented hexagons. 

\begin{lemma}\labelnote{CiucuCorr}
    Let $R$ be a vertically symmetric region with symmetry axis $L$, whose subregion of triangles on $L$ is also a tileable region. Then $R$ has a tiling iff it has a vertically symmetric tiling.
\end{lemma}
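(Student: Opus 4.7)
The ``if'' direction is immediate, since a vertically symmetric tiling is in particular a tiling. For the ``only if'' direction, I first claim that in any vertically symmetric tiling $T$ of $R$, every triangle $c \in C$ (where $C$ denotes the subregion of triangles of $R$ on $L$) is paired with another triangle of $C$ via a vertically symmetric lozenge. Indeed, suppose instead $c$ were paired with an off-axis neighbor $w$. Letting $\sigma$ denote reflection across $L$, the equality $T = \sigma(T)$ would force $T$ to also contain the reflected lozenge pairing $c = \sigma(c)$ with $\sigma(w) \neq w$, contradicting the fact that $c$ lies in exactly one lozenge of $T$. Consequently, the vertically symmetric tilings of $R$ are in bijection with triples consisting of a tiling of $W$ (the subregion of triangles of $R$ with interior strictly west of $L$), its $\sigma$-image tiling the strictly-east half $E$, and a tiling of $C$. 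Since $C$ is tileable by hypothesis, the lemma reduces to showing that whenever $R$ is tileable, so is $W$.

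For this, I apply Ciucu's Factorization Theorem (Theorem~\ref{ciucu}) to $R$: it yields $\M(R) = 2^k \M(R_F^-) \M(R_F^+)$, where $R_F^\pm$ are the factorization halves, so tileability of $R$ forces $\M(R_F^\pm) > 0$. Because $C$ is a region, hence connected, its triangles alternate orientation strictly along $L$; labeling them $a_1, b_1, \ldots, a_k, b_k$ from top to bottom, either all the $a_i$ are up-pointing (and all $b_i$ down-pointing) or vice versa. In the first case, by the definition of $R_F^-$ in Theorem~\ref{ciucu} no triangle of $C$ is included in $R_F^-$, so $R_F^- = W$, which is therefore tileable. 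In the second case, all of $C$ lies in $R_F^-$ and $R_F^+ = E$; tileability of $R_F^+$ gives tileability of $E$, and since $E$ and $W$ are mirror images under $\sigma$, $W$ is tileable as well. In either case $W$ is tileable, and combining a tiling of $W$, its $\sigma$-image on $E$, and the hypothesized tiling of $C$ produces a vertically symmetric tiling of $R$.

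The main subtlety is the appeal to connectedness of $C$ to ensure that all $a_i$ share one orientation and all $b_i$ the other; without this, the factorization halves would contain only some of the $C$-triangles, and identifying them directly with $W$ or $E$ would fail. In that more general setting one would need to apply the factorization iteratively or use a cycle-swapping argument between $T$ and $\sigma(T)$, resolving self-symmetric cycles by means of the chosen tiling of $C$. But since $C$ is assumed to be a region, the two-case analysis above suffices.
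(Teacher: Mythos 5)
Your proof is correct and follows essentially the same route as the paper's: apply Ciucu's Factorization Theorem to conclude $\M(R^-)\neq 0$, identify $R^-$ with the strictly-west half of $R$, and assemble a symmetric tiling from a tiling of that half, its reflection, and the given tiling of the axis subregion. The only cosmetic difference is that the paper pins down the orientations of the $a_i, b_i$ directly from the tileability of the axis subregion (which forces it to be a union of vertical lozenges, so every $a_i$ is up-pointing), whereas you derive a two-case dichotomy from connectedness; your second case is in fact vacuous, since a run of axis triangles beginning with a down-pointing one cannot be tiled, but your handling of it is harmless.
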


\begin{proof}
    It suffices to show that if $R$ has a tiling it must also have a vertically symmetric tiling. We proceed using the notation of Theorem \ref{ciucu}.
    
    Since $R$ has a tiling, $\M(R)$ is nonzero, and so $\M(R^-)$ and $\M(R^+)$ must both be nonzero, by Theorem \ref{ciucu}.

Since the subregion of triangles on $L$ is tileable, it must be the union of interior-disjoint vertical lozenges on $L$: then each $a_i$ is up-pointing, each $b_i$ is down-pointing, and $R^-$ contains no triangles on $L$.
    
    Let $\mu$ be any tiling on $R^-$, and let $\mu'$ be its reflection across $L$ (interpreted as a tiling of the subregion of triangles strictly east of $L$). Let $\nu$ be the tiling of the subregion of triangles on $L$. Then  $\mu \cup \mu' \cup \nu$ is a tiling of $R$ which is symmetric across $L$.

\end{proof}

We are now ready to prove Proposition \ref{SymmetricTileability}.

\begin{proof}[Proof of Proposition \ref{SymmetricTileability}]
    Let $H:= H_{a,b,b,2n,(u_i)_1^n, (u_i)_1^n}$ be a dented hexagon. Then $H$ is symmetric across a vertical axis $L$, as displayed in the Figure \ref{fig:factorizeHex}.
    
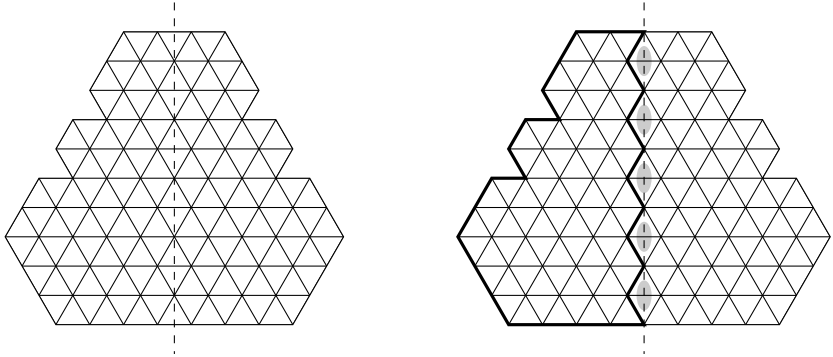
\begin{figure}
    \begin{minipage}[c]{\textwidth}
    \centering
    \begin{multicols}{2}

    \begin{tikzpicture}[x={(0:.45cm)},y={(120:.45cm)},z={(240:.45cm)}]
	\pgfmathtruncatemacro{\a}{3}
	\pgfmathtruncatemacro{\b}{3}
	\pgfmathtruncatemacro{\c}{3}
	\pgfmathtruncatemacro{\t}{4}
    
	\pgfmathtruncatemacro{\longdiag}{\a+\b+\c+\t+\t+1}
	
	\draw[dashed] (0,2,-2) -- (0,-4,4);
	
	\draw[clip] (\a+\t,0,\t) -- ++(0,2,0)
	-- ++ (-1,0,0) -- ++(0,0,-1)--++(0,1,0)
	-- ++ (-1,0,0) -- ++(0,0,-1)
	-- ++ (0,2,0)
	-- ++(-\a,0,0) -- ++(0,0,2)
	-- ++ (0,-1,0) -- ++(-1,0,0) -- ++ (0,0,1)
	-- ++ (0,-1,0) -- ++(-1,0,0) -- ++ (0,0,2) --
	++(0,-\b,0) -- ++(\a+\t,0,0) -- cycle;

	\foreach \i in {0,...,\longdiag}{
        \draw[-] (\i,0,\c+\t) -- ++(0,\longdiag,0);
        \draw[-] (\a+\t,\i,0) -- ++(0,0,\longdiag);
        \draw[-] (0,\b+\t,\i) -- ++(\longdiag,0,0);
    }
    \end{tikzpicture}

    \columnbreak
        
    \begin{tikzpicture}[x={(0:.45cm)},y={(120:.45cm)},z={(240:.45cm)}]
	\pgfmathtruncatemacro{\a}{4}
	\pgfmathtruncatemacro{\b}{3}
	\pgfmathtruncatemacro{\c}{3}
	\pgfmathtruncatemacro{\t}{4}

	\pgfmathtruncatemacro{\longdiag}{\a+\b+\c+\t+\t}
	
	\draw[very thick] (0,0,0) --++(0,0,-1) --++(0,1,0) --++(0,0,-1) --++(-2,0,0) -- ++(0,0,2)
	-- ++ (0,-1,0) -- ++(-1,0,0) -- ++ (0,0,1)
	-- ++ (0,-1,0) -- ++(-1,0,0) -- ++ (0,0,2) -- ++ (0,-3,0) --++ (4,0,0)
	--++(0,1,0)
	--++(0,0,-1) --++(0,1,0)
	--++(0,0,-1) --++(0,1,0)
	--++(0,0,-1) -- cycle;
	
	\draw[dashed] (.5,2,-2) -- (.5,-4,4);
	
	\draw[clip] (\a+\t,0,\t) -- ++(0,2,0)
	-- ++ (-1,0,0) -- ++(0,0,-1)--++(0,1,0)
	-- ++ (-1,0,0) -- ++(0,0,-1)
	-- ++ (0,2,0)
	-- ++(-\a,0,0) -- ++(0,0,2)
	-- ++ (0,-1,0) -- ++(-1,0,0) -- ++ (0,0,1)
	-- ++ (0,-1,0) -- ++(-1,0,0) -- ++ (0,0,2) --
	++(0,-\b,0) -- ++(\a+\t,0,0) -- cycle;

	\foreach \i in {0,...,\longdiag}{
        \draw[-] (\i,0,\c+\t) -- ++(0,\longdiag,0);
        \draw[-] (\a+\t,\i,0) -- ++(0,0,\longdiag);
        \draw[-] (0,\b+\t,\i) -- ++(\longdiag,0,0);
    }

		\foreach \i in {1,3,5,7,-1}{
	\fill[black, fill opacity=.2] (.5*\i,0,\i-1) ellipse (.1cm and .2cm);
	}

    \end{tikzpicture}
    
    \end{multicols}
    \vspace{-.5cm}
    \caption{Left: a vertically symmetric dented hexagon with $a$ odd. Right: a vertically symmetric dented hexagon with $a$ even, and the outline of $R^-$ drawn thickly, and the weighted lozenges of $R^+$ shaded. Both figures' symmetry axes are depicted as a dashed line.}
    \labelnote{fig:factorizeHex}
    \end{minipage}
\end{figure}

    The vertices along the northern side of $H$ are down-pointing. In the case that $a$ is odd the top triangle on $L$ is down-pointing, and is only adjacent to triangles strictly left or right of $L$: this triangle cannot be matched to either in a vertically symmetric tiling. Then $H$ has no vertically symmetric tilings.
    
    In the case that $a$ is even, $H$ satisfies the conditions in Lemma \ref{CiucuCorr} and thus has a symmetric tiling iff it has a tiling. By Theorem \ref{Condon}, this occurs if each $N$th lattice line south of $H$'s northern side has at most $N$ dents lying north of it. This is equivalent to the condition $2i \leq u_i$ for each $i \in [n]$.

\end{proof}

We are now ready to prove Corollaries \ref{Vexistence} and \ref{easiness}.

\begin{proof}[Proof of Corollary \ref{Vexistence}]

    It suffices to show that perfect matchings of $V_{a,b,n,(u_i)_1^n}$ are in bijection with vertically symmetric perfect matchings of $H_{a,b,b,2n,(u_i)_1^n,(u_i)_1^n}$.
    
    In any vertically symmetric perfect matching of a dented hexagon, triangles on the axis of symmetry must be covered by vertical lozenges. So the tiling of $H_{a,b,b,2n,(u_i)_1^n,(u_i)_1^n}$ is uniquely determined by the tiling of its subregion strictly left of the axis of symmetry; this region is $V_{a,b,n,(u_i)_1^n}$.
    
\end{proof}

\begin{proof}[Proof of Corollary \ref{easiness}]

    Since $V_{a,b,n,(u_i)_1^n}$ is tileable, $2i \leq u_i \leq u_i'$ for each $u_i'$.

\end{proof}

\section{Proof of Theorem \ref{main}}
\labelnote{mainproof}

Our Theorem \ref{main} has been proven independently by Lai in \cite{La21} and Fulmek in \cite{Fu}, in fact in more generality, but we give an alternate proof in this section for completeness.

\begin{lemma}\labelnote{polynomial}
    Where $V_{a,b,n,(u_i)_1^n}$ is a tileable dented half-hexagon, $f_{b,n,(u_i)_1^n}(a)$ is a polynomial in $a$.
    
\end{lemma}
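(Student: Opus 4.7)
The plan is to apply the Lindstr{\"o}m--Gessel--Viennot lemma to express $f_{b,n,(u_i)_1^n}(a)$ as the determinant of a matrix whose entries are binomial coefficients counting north-east lattice paths, and then to verify that each such entry is itself a polynomial in $a$.

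First I would set up the path bijection described in the introduction. Under the appropriate linear transformation, the tilings of $V_{a,b,n,(u_i)_1^n}$ correspond bijectively to nonintersecting families of north-east lattice paths on a subgraph of the square lattice. The number of paths in each family is a constant $N$ depending only on $b$ and $n$, and the sources and sinks are determined by the region's corners, its dents $(u_i)_1^n$, and its eastern boundary on the symmetry axis.

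Next I would track how these sources and sinks depend on $a$. Since $\{V_{a,b,n,(u_i)_1^n}\}_{a\in \mathbb N}$ is a tubey family, varying $a$ amounts to inserting a horizontal tube of length $a$ along a fixed eastern boundary $z$ of $V_{0,b,n,(u_i)_1^n}$; after the linear transformation, the sinks remain in fixed positions (they are determined only by the dents and the symmetry axis, neither of which move with $a$), while each source on the image of the western slanted boundary translates by a fixed amount in a single fixed horizontal direction per increment of $a$. Consequently, for each source $s_i$ and sink $t_j$, the number of north-east lattice paths from $s_i$ to $t_j$ is a binomial coefficient whose arguments are affine in $a$ but whose upper and lower indices differ by a constant $d_{ij}$ independent of $a$. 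Using the symmetry $\binom{n}{k}=\binom{n}{n-k}$, this count can be rewritten as $\binom{a+c_{ij}}{d_{ij}}$ for integer constants $c_{ij}, d_{ij}$, which is a polynomial in $a$ of degree $d_{ij}$.

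The LGV lemma then gives $f_{b,n,(u_i)_1^n}(a) = \det M(a)$ for all sufficiently large $a$, where each entry of $M(a)$ is a polynomial in $a$; hence the determinant is a polynomial in $a$, and since it agrees with the tiling function on an infinite set of values of $a$, the tiling function itself is that polynomial. The main obstacle will be making the LGV setup precise for this family: specifying the sources and sinks carefully from the dent data, verifying that varying $a$ shifts only the sources and only along a single axis direction, and applying the binomial symmetry to conclude polynomiality in $a$. Each of these steps is routine in the LGV framework, but together they require a concrete description of the transformed region and identification of each binomial entry.
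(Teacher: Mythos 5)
Your proposal follows essentially the same route as the paper: pass to nonintersecting lattice-path families via the standard bijection, note that each entry of the Lindstr{\"o}m--Gessel--Viennot path matrix is a binomial coefficient whose lower index does not involve $a$ and whose upper index is affine in $a$ (hence a polynomial in $a$), and conclude that the determinant, and so the tiling function, is a polynomial in $a$. This is exactly the paper's argument, with the entries given explicitly in \eqref{explicit}.

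Two details of your write-up need repair, though neither affects the viability of the approach. First, your assignment of endpoints is off: the dents lie on the northwest boundary and are endpoints of the same type as the $b$ endpoints along the southwest side (sources, in the paper's convention), while all $b+n$ endpoints of the opposite type lie along the vertical symmetry axis; it is these eastern endpoints whose transformed coordinates are affine in $a$, the western ones (including the dents) being fixed. In particular the dents do \emph{not} stay at a fixed position relative to the symmetry axis as $a$ grows --- the extra width is inserted between them --- and grouping ``the dents and the symmetry axis'' together as fixed sinks would also leave the numbers of sources and sinks unequal. The fact you actually need survives the correction: for every source--sink pair the vertical displacement is independent of $a$ and the horizontal displacement is affine in $a$, so each entry has the form $\binom{a+c_{ij}}{d_{ij}}$ with $d_{ij}$ constant. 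Second, your closing inference --- that the determinant identity holds ``for all sufficiently large $a$'' and agreement on an infinite set then forces the tiling function to be that polynomial --- is not valid for a function defined only on $\mathbb N$: agreement on infinitely many integers does not give agreement everywhere unless the function is already known to be polynomial. The restriction to large $a$ is unnecessary: the Lindstr{\"o}m--Gessel--Viennot identity applies to each region separately, i.e.\ for every $a \in \mathbb N$, so $f_{b,n,(u_i)_1^n}(a)$ equals the determinant of the polynomial matrix for all $a$, and the conclusion follows directly, as in the paper.
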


\begin{proof}
    Per the transformation discussed briefly at the beginning of section \ref{background}, the tilings of $V_{a,b,n,(u_i)_1^n}$ are in bijection with nonintersecting lattice paths on a subgraph of the north-east directed square lattice. As depicted in Figure \ref{fig:DentedHexCoordinates1}, the sources of that digraph can be said to have coordinates $\{(i-1,b-i) : i \in [b]\}$ and $\{(0,2b+2n-u_i) : i \in [n]\}$, in which case the sinks have coordinates $\{(a+b+n-j, 2j-2): j \in [b+n]\}$. The entries in the path matrix are then of either form:
    \begin{equation}\binom{a+n+j-1}{2j-2-b+i} \quad \mbox{ or } \quad \binom{a-b-n-2+j+u_i}{2j-2-2b-2n+u_i}. \labelnote{explicit}\end{equation}
    So, treating $a$ as an indeterminate each entry in the path matrix is a polynomial on $a$ of degree $2j-2-b+i$ or $2j-2-2b-2n+u_i$, or is identically zero. The determinant of the path matrix is thus a signed sum over products of polynomials on $a$, and is therefore a polynomial on $a$.

\end{proof}

\begin{figure}
    \begin{minipage}[c]{\textwidth}
    \centering

	\begin{multicols}{2}

    \begin{tikzpicture}[x={(0:1cm)},y={(120:1cm)},z={(240:1cm)},scale=.45]
	\pgfmathtruncatemacro{\a}{6}
	\pgfmathtruncatemacro{\b}{5}
	\pgfmathtruncatemacro{\c}{5}
	\pgfmathtruncatemacro{\t}{6}
    
	\pgfmathtruncatemacro{\longdiag}{\a+\b+\c+\t}

	\begin{scope}

	\draw[clip] (0,0,0) --++(-3,0,0)
	 -- ++(0,0,2)-- ++ (0,-1,0) -- ++(-1,0,0)
	 -- ++(0,0,2)-- ++ (0,-1,0) -- ++(-1,0,0)
	 -- ++(0,0,2)-- ++ (0,-1,0) -- ++(-1,0,0)
	 -- ++(0,0,2)-- ++ (0,-5,0) --++ (6,0,0)
	--++ (0,1,0) --++(0,0,-1)
	--++ (0,1,0) --++(0,0,-1)
	--++ (0,1,0) --++(0,0,-1)
	--++ (0,1,0) --++(0,0,-1)
	--++ (0,1,0) --++(0,0,-1)
	--++ (0,1,0) --++(0,0,-1)
	--++ (0,1,0) --++(0,0,-1)
	--++ (0,1,0) --++(0,0,-1) -- cycle;
	
	\foreach \i in {0,...,\longdiag}{
        \draw[-] (0,\a,\i) -- ++(0,-\longdiag,0);
        \draw[-] (0,\a-\i,0) -- ++(0,0,\longdiag);
        \draw[-] (0,-\i,0) -- ++(-\longdiag,0,0);
   	 }
	\end{scope}

	\begin{scope}
		\clip (0,0,0) --++ (-4,0,0) --++(0,0,17) --++(12,0,0)  --++(0,0,-1) -- cycle;
		\foreach \i in {1,...,16}{
			\draw[red] (.5,.5-.5*\i,.5*\i) --++(-14,0,0);
		}
		\foreach \i in {1,...,11}{
			\draw[red] (-3.5 + \i, 0,-.5) --++(0,0,19);
		}
	\end{scope}	

	\begin{scope}[xshift=-10.2cm,yshift=-13.4cm, node distance=.45cm,]

	\node [shape=circle,fill=black,scale=.35] (O) at (0,0){};
	\node [below left of=O] {\tiny (0,0)};

	\node [shape=circle,fill=black,scale=.35] (B) at (4,0){};

	\node [shape=circle,fill=black,scale=.35] (C) at (4,4){};

	\node [shape=circle,fill=black,scale=.35] (C1) at (4,3){};
	\node [shape=circle,fill=black,scale=.35] (C2) at (4,2){};
	\node [shape=circle,fill=black,scale=.35] (C3) at (4,1){};

	\node [shape=circle,fill=black,scale=.35] (D3) at (7,7){};
	\node [shape=circle,fill=black,scale=.35] (D2) at (10,10){};
	\node [shape=circle,fill=black,scale=.35] (D1) at (13,13){};

	\foreach \i in {1,...,8}{
		\node [shape = circle, draw=black, fill=white, scale=.35] (S\i) at (9+\i, 2*\i-2){};
	}

	\end{scope}

    \end{tikzpicture}


    \begin{tikzpicture}[x={(0:1cm)},y={(135:1.41cm)},z={(270:1cm)},scale=.45]
	\pgfmathtruncatemacro{\a}{6}
	\pgfmathtruncatemacro{\b}{5}
	\pgfmathtruncatemacro{\c}{5}
	\pgfmathtruncatemacro{\t}{6}
    
	\pgfmathtruncatemacro{\longdiag}{\a+\b+\c+\t}

	\begin{scope}

	\draw[clip] (0,0,0) --++(-3,0,0)
	 -- ++(0,0,2)-- ++ (0,-1,0) -- ++(-1,0,0)
	 -- ++(0,0,2)-- ++ (0,-1,0) -- ++(-1,0,0)
	 -- ++(0,0,2)-- ++ (0,-1,0) -- ++(-1,0,0)
	 -- ++(0,0,2)-- ++ (0,-5,0) --++ (6,0,0)
	--++ (0,1,0) --++(0,0,-1)
	--++ (0,1,0) --++(0,0,-1)
	--++ (0,1,0) --++(0,0,-1)
	--++ (0,1,0) --++(0,0,-1)
	--++ (0,1,0) --++(0,0,-1)
	--++ (0,1,0) --++(0,0,-1)
	--++ (0,1,0) --++(0,0,-1)
	--++ (0,1,0) --++(0,0,-1) -- cycle;
	
	\foreach \i in {0,...,\longdiag}{
        \draw[-] (0,\a,\i) -- ++(0,-\longdiag,0);
        \draw[-] (0,\a-\i,0) -- ++(0,0,\longdiag);
        \draw[-] (0,-\i,0) -- ++(-\longdiag,0,0);
   	 }
	\end{scope}

	\begin{scope}
		\clip (0,0,0) --++ (-4,0,0) --++(0,0,17) --++(12,0,0)  --++(0,0,-1) -- cycle;
		\foreach \i in {1,...,16}{
			\draw[red] (.5,.5-.5*\i,.5*\i) --++(-14,0,0);
		}
		\foreach \i in {1,...,11}{
			\draw[red] (-3.5 + \i, 0,-.5) --++(0,0,19);
		}
	\end{scope}	

	\begin{scope}[xshift=-2.5cm,yshift=-15.5cm, node distance=.45cm,]

	\node [shape=circle,fill=black,scale=.35] (O) at (0,0){};
	\node [below left of=O] {\tiny (0,0)};

	\node [shape=circle,fill=black,scale=.35] (B) at (4,0){};

	\node [shape=circle,fill=black,scale=.35] (C) at (4,4){};

	\node [shape=circle,fill=black,scale=.35] (C1) at (4,3){};
	\node [shape=circle,fill=black,scale=.35] (C2) at (4,2){};
	\node [shape=circle,fill=black,scale=.35] (C3) at (4,1){};

	\node [shape=circle,fill=black,scale=.35] (D3) at (7,7){};
	\node [shape=circle,fill=black,scale=.35] (D2) at (10,10){};
	\node [shape=circle,fill=black,scale=.35] (D1) at (13,13){};

	\foreach \i in {1,...,8}{
		\node [shape = circle, draw=black, fill=white, scale=.35] (S\i) at (9+\i, 2*\i-2){};
	}

	\draw [decorate, decoration=brace] (-2.1,-.1) --++ (4,4);
	\node (L1) at (-1.5,2) {$b-1$};

	\draw [decorate, decoration=brace] (2,4) --++ (11,11);
	\node (L2) at (6,9.5) {$b+2n$};

	\draw [decorate, decoration=brace] (1.9,-2) --++ (-4,0);
	\node (L3) at (-1,-3) {$b-1$};

	\draw [decorate, decoration=brace] (8,-2) --++(-6,0);
	\node (L3) at (4,-3) {$a+n$};
	\end{scope}

	\end{tikzpicture}

	\end{multicols}

    \caption{Left: A dented half-hexagon with the east-northeast square lattice superimposed. We are interested in paths on the directed subgraph of the lattice that lies within the region; sources for this graph are depicted as black circles, and sinks are shown as white circles. Right: We take a linear transformation of the left image so that lattice lines lie vertically and horizontally. We choose Cartesian coordinates for the points where lattice lines intersect, so that the intersection in the bottom left corner of the picture has coordinates $(0,0)$ and points in the dented hexagon have nonnegative coordinates. Sources lie along the southwest side of the region at coordinates $\{(i-1,b-i): i \in [b]\}$ and along dents at coordinates $\{(0,2b+2n-u_i: i \in [n]\}$. Sinks lie at coordinates $\{(a+b+n-j, 2j-2): j \in [b+n]\}$.}
    \label{fig:DentedHexCoordinates1}
    \end{minipage}
\end{figure}
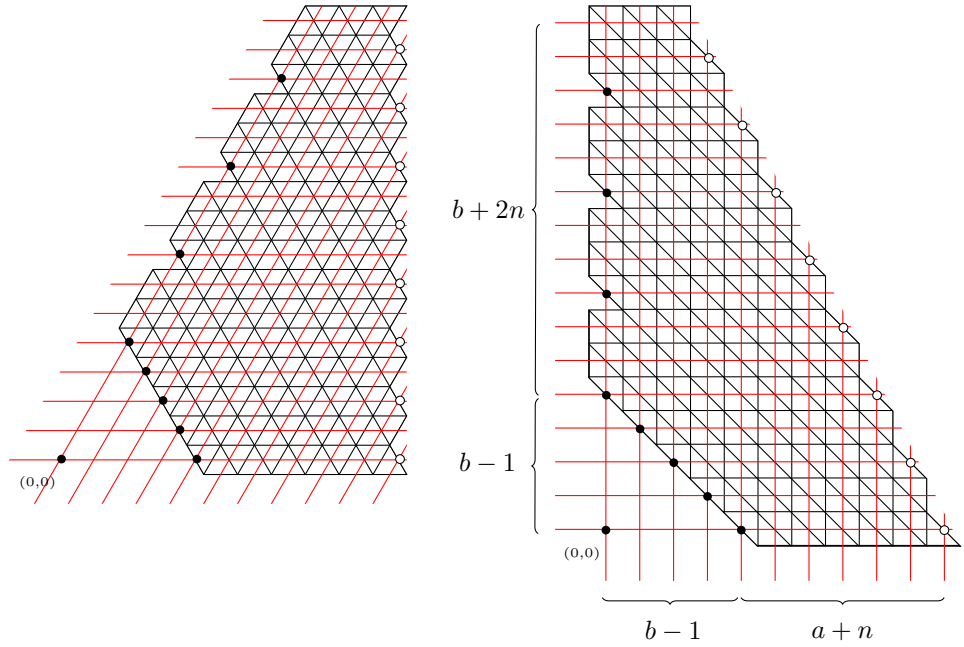

We denote the normalization\footnote{A normalization in the sense that it differs from the actual tiling function by a factor independent of $a$, but is simpler to express.} of the proposed tiling function for dented half-hexagons
$$F_{b,n,(u_i)_1^n}(a):= \dfrac{P_{|}(a,b+n)}{\prod_{i=1}^n(2a+u_i)_{\underline{u_i}}}.$$

We will proceed using the nonstandard notation $\aequiv$ to denote proportionality between rational functions which are polynomials in $a$. That is, for $g_1, g_2$ two rational functions (possibly of several variables) which are polynomials in $a$, we will write
$$g_1 \aequiv g_2$$
to mean there exists some (not identically zero) $c$ independent of $a$ so that
$$g_1 = c \cdot g_2.$$
It is easy to check $\aequiv$ is an equivalence relation of rational functions which are polynomials in $a$, satisfying the following properties where $p_1, p_2,$ and $q$ are polynomials in $a$, and $q$ is not identically zero.

\begin{align}
    p_1(a) \aequiv p_2(a)& \Rightarrow p_1(a) + p_2(a) \aequiv p_1(a) \aequiv p_2(a) \labelnote{eq:equiv1}\\
    p_1(a) \aequiv p_2(a) & \iff p_1(a)q(a) \aequiv p_2(a) q(a)\labelnote{eq:equiv2}
\end{align}

The method of our proof is to show that
\begin{equation}\labelnote{eq:main}
    F_{b,n,(u_i)_1^n}(a) \aequiv f_{b,n,(u_i)_1^n}(a).
\end{equation}
We will do this by induction, and we start by stating a lemma that will be important to the inductive step.
A dented half-hexagon with $u_1=2$ (respectively $u_n = b+2n$) has forced lozenges covering the entire region weakly north of that dent (respectively along its southwest side) which when removed leave dented half-hexagons with different parameters, as depicted in Figure \ref{fig:forced}.

\begin{figure}
    \begin{minipage}[c]{\textwidth}
    \centering
    \begin{multicols}{2}

    \begin{tikzpicture}[x={(0:.45cm)},y={(120:.45cm)},z={(240:.45cm)}]
	\pgfmathtruncatemacro{\a}{4}
	\pgfmathtruncatemacro{\b}{2}
	\pgfmathtruncatemacro{\c}{2}
	\pgfmathtruncatemacro{\t}{6}

	\pgfmathtruncatemacro{\longdiag}{\a+\b+\c+\t+\t}
	
	\fill[black!20] (0,1,-2)--++(-3,0,0)--++(0,0,1)--++(0,-1,0)--++(3,0,0)--++(0,1,0)--cycle;
	
	\draw[clip] (0,0,0) --++(0,0,-1) --++(0,1,0) --++(0,0,-1) --++
	(-3,0,0)
	-- ++(0,0,1)
	-- ++ (0,-1,0) -- ++(-1,0,0) -- ++ (0,0,2)
	-- ++ (0,-1,0) -- ++(-1,0,0) -- ++ (0,0,2)
	-- ++ (0,-1,0) -- ++(-1,0,0) -- ++ (0,0,0) -- ++ (0,-2,0) --++ 
	(6,0,0)
	--++(0,1,0)
	--++(0,0,-1) --++(0,1,0)
	--++(0,0,-1) --++(0,1,0)
	--++(0,0,-1) -- cycle;

	\foreach \i in {0,...,\longdiag}{
        \draw[-] (\i,0,\c+\t) -- ++(0,\longdiag,0);
        \draw[-] (\a+\t,\i,0) -- ++(0,0,\longdiag);
        \draw[-] (0,\b+\t,\i) -- ++(\longdiag,0,0);
    }

    \end{tikzpicture}

    \columnbreak
        
    \begin{tikzpicture}[x={(0:.45cm)},y={(120:.45cm)},z={(240:.45cm)}]
	\pgfmathtruncatemacro{\a}{4}
	\pgfmathtruncatemacro{\b}{2}
	\pgfmathtruncatemacro{\c}{2}
	\pgfmathtruncatemacro{\t}{6}

	\pgfmathtruncatemacro{\longdiag}{\a+\b+\c+\t+\t}
	
	
	\fill[black!20] (-4,0,5) --++(0,-2,0)--++(1,0,0)--++(0,2,0)--cycle;
	
	\draw[clip] (0,0,0) --++(0,0,-1) --++(0,1,0) --++(0,0,-1) --++
	(-3,0,0)
	-- ++(0,0,1)
	-- ++ (0,-1,0) -- ++(-1,0,0) -- ++ (0,0,2)
	-- ++ (0,-1,0) -- ++(-1,0,0) -- ++ (0,0,2)
	-- ++ (0,-1,0) -- ++(-1,0,0) -- ++ (0,-2,0) --++ 
	(6,0,0)
	--++(0,1,0)
	--++(0,0,-1) --++(0,1,0)
	--++(0,0,-1) --++(0,1,0)
	--++(0,0,-1) -- cycle;

	\foreach \i in {0,...,\longdiag}{
        \draw[-] (\i,0,\c+\t) -- ++(0,\longdiag,0);
        \draw[-] (\a+\t,\i,0) -- ++(0,0,\longdiag);
        \draw[-] (0,\b+\t,\i) -- ++(\longdiag,0,0);
    }

    \end{tikzpicture}
    
    \end{multicols}
    \vspace{-.5cm}
    \caption{Each image depicts the region $V_{a,b,n,(u_i)_1^n}$ with some of its forced lozenges shaded gray. Left: a dented half-hexagon with $u_1=2$ has forced lozenges weakly north of that dent; removing these leaves the region $V_{a+1,b,n-1,(u_i-2)_2^n}$. Right: a dented half-hexagon with $u_n=b+2n$ has forced lozenges along the southwestern side of the region; removing these leaves the region $V_{a,b+1,n-1,(u_i)_1^{n-1}}$.}
    \labelnote{fig:forced}
    \end{minipage}
\end{figure}
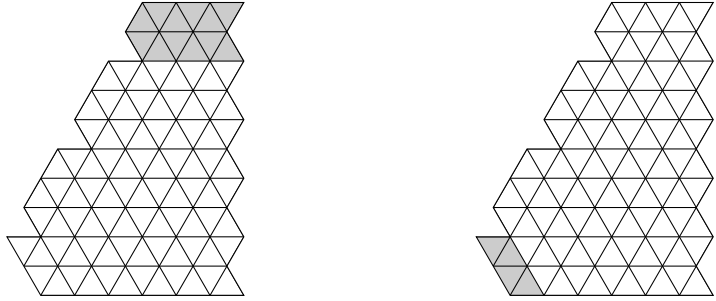

\begin{lemma}\labelnote{case} \ 

    \begin{enumerate}
        \item Where $V_{a,b,n,(u_i)_1^n}$ is a tileable dented half-hexagon, and $u_1=2$,
        $$F_{b,n,(u_i)_1^n}(a) \aequiv F_{b,n-1,(u_i-2)_2^n}(a+1),$$
        $$f_{b,n,(u_i)_1^n}(a) = f_{b,n-1,(u_i-2)_2^n}(a+1).$$
        \item Where $V_{a,b,n,(u_i)_1^{n}}$ is a tileable dented half-hexagon, and $u_n=b+2n$,
        $$F_{b,n,(u_i)_1^{n}}(a) \aequiv F_{b+1,n-1,(u_i)_1^{n-1}}(a),$$
        $$f_{b,n,(u_i)_1^{n} }(a) = f_{b+1,n-1,(u_i)_1^{n-1}}(a).$$
    \end{enumerate}
\end{lemma}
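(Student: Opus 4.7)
The plan is to prove each of the two parts of Lemma \ref{case} as two subclaims: the identity of tiling functions $\f$, from a forced-lozenge argument as in Figure \ref{fig:forced}, and the proportional identity $\aequiv$ for the normalized formula $F$, by direct manipulation of the explicit expression $F_{b,n,(u_i)_1^n}(a) = P_{|}(a,b+n)/\prod_{i=1}^n (2a+u_i)_{\underline{u_i}}$.

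For the forced-lozenge arguments: in Part 1, once the $u_1$-dent is removed, the topmost up-pointing triangle on the northwest border is left with a unique neighbor, and matching it initiates a cascade of lozenges that covers exactly the strip of triangles weakly north of the first dent. The remainder, identified from the figure, is $V_{a+1, b, n-1, (u_i-2)_2^n}$. In Part 2, an analogous cascade originates beneath the last dent and sweeps along the southwest side, leaving $V_{a, b+1, n-1, (u_i)_1^{n-1}}$. Since all forced lozenges have weight $1$, the tiling counts match.

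For the $F$-identities the key observation is that $\underline{u_i} = b+n+i-u_i$ is invariant under both transformations: in Part 1 after reindexing and the shift $a \to a+1$, the factor $(2(a+1)+u_i-2)_{\underline{u_i-2}}$ equals $(2a+u_i)_{\underline{u_i}}$, so all Pochhammer factors for $i \geq 2$ cancel between the two sides; in Part 2 each $\underline{u_i}$ with $i \leq n-1$ is preserved since $b+n = (b+1)+(n-1)$, and moreover $\underline{u_n} = 0$ forces $(2a+u_n)_{\underline{u_n}}=1$. Part 2's $F$-identity is therefore an exact equality after these cancellations.

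Part 1 reduces, after the analogous cancellation, to the single algebraic claim
$$P_{|}(a, B) \aequiv (2a+2)_{B-1}\,P_{|}(a+1, B-1),$$
with $B := b+n$. This follows from Proctor's formula by a direct calculation: the leading ratio $(a+1)_{B-1}/(a+2)_{B-2} = a+1$ combines with the factorization $(2a+2)_{B-1} = 2(a+1)(2a+3)_{B-2}$, while the double product in $P_{|}(a,B)$ splits off its $j = B-1$ row and the remaining factor is related to $\prod_{1 \leq i \leq j \leq B-2}(2a+i+j+1)/(i+j-1)$ by a reindexing. All factors independent of $a$ are absorbed by $\aequiv$. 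The main obstacle is this algebraic reconciliation of the double products in Part 1; everything else is routine bookkeeping.
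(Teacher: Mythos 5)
Your proposal is correct and follows the same route as the paper: the $f$-identities come from the forced-lozenge removals of Figure \ref{fig:forced}, and the $F$-identities are verified directly from the definition of $F$ (the paper simply states these checks are straightforward, while you carry them out, correctly noting the invariance of $\underline{u_i}$, the exactness in Part 2, and the reduction of Part 1 to $P_{|}(a,B) \aequiv (2a+2)_{B-1}P_{|}(a+1,B-1)$, which indeed holds).
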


\begin{proof}
    Both statements regarding $f$ follow from the removal of forced lozenges as depicted in Figure \ref{fig:forced}.
    Both statements regarding $F$ are straightforward to check from the definition of $F$.

\end{proof}

We now show that Theorem \ref{main} holds for dented half-hexagons with at most one dent. This will serve as a base case when we show relation \ref{eq:main} holds in general. We let $\vec \emptyset$ denote the empty vector. 

\begin{lemma}\labelnote{basecase}
    For $V_{a, b, n, \vec u}$ a tileable dented half-hexagon with $\vec u = \vec \emptyset$ and $n=0$, or $\vec u = (u)$ and $n=1$, relation \ref{eq:main} holds.
\end{lemma}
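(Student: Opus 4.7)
The case $n = 0$ is immediate: the region $V_{a,b,0,\vec\emptyset}$ is the western half of $H_{2a,b,b}$, and by the argument in the proof of Corollary \ref{Vexistence} its tilings correspond bijectively to the vertically symmetric tilings of $H_{2a,b,b}$. Hence $\f_{b,0,\vec\emptyset}(a) = \p_{|}(a,b)$, which matches $F_{b,0,\vec\emptyset}(a)$ by definition, giving $\f \aequiv F$ (with constant of proportionality $1$).

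For $n = 1$ with $\vec u = (u)$, Corollary \ref{Vexistence} forces the tileability condition $2 \leq u \leq b+2$. At the extreme $u = 2$, Lemma \ref{case}(1) yields $\f_{b,1,(2)}(a) = \f_{b,0,\vec\emptyset}(a+1)$ and $F_{b,1,(2)}(a) \aequiv F_{b,0,\vec\emptyset}(a+1)$, so the $n=0$ case closes this subcase. The extreme $u = b+2$ is handled analogously via Lemma \ref{case}(2), reducing to $\f_{b+1,0,\vec\emptyset}$.

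The remaining range $2 < u < b+2$ is the main obstacle. My plan here is to combine Theorem \ref{paper1} with the Factorization Theorem. Applying Theorem \ref{paper1} (in the form of Corollary \ref{Condon3}) to the vertically symmetric dented hexagon $H_{2a,b,b,2,(u),(u)}$ produces
$$\M(H_{2a,b,b,2,(u),(u)}) \aequiv \frac{\p(2a, b+1, b+1)}{((2a+u)_{\underline u})^2},$$
while Theorem \ref{ciucu} simultaneously factors the same count as $2 \cdot \f_{b,1,(u)}(a) \cdot \overline{\f}^+_{b,1,(u)}(a)$. Combining these with Ciucu's identity $\p(2a, b+1, b+1) = \p_{-}(a,b+1) \cdot \p_{|}(a,b+1)$ gives
$$\f_{b,1,(u)}(a) \cdot \overline{\f}^+_{b,1,(u)}(a) \aequiv \frac{\p_{-}(a,b+1) \cdot \p_{|}(a,b+1)}{((2a+u)_{\underline u})^2}.$$

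The delicate step is to isolate $\f_{b,1,(u)}(a)$ from this product so that it picks up exactly the $\p_{|}$ factor. I plan to do this via a direct computation of the Lindström--Gessel--Viennot determinant described in the proof of Lemma \ref{polynomial}: for $n=1$ the path matrix is a $(b+1) \times (b+1)$ array whose top $b$ rows are the standard MacMahon-type binomials $\binom{a+j}{2j-b-2+i}$ coming from the southwest sources, augmented by a single row $\binom{a+j-1-\underline u}{2j-b-2-\underline u}$ coming from the dent source. A degree count shows that both $\f_{b,1,(u)}(a)$ and $F_{b,1,(u)}(a)$ are polynomials in $a$ of the same degree $b(b-1)/2 + u - 2$, so it suffices to match roots with multiplicity; a Cauchy--Binet decomposition (Theorem \ref{CauchyBinet}) of the path matrix as a product of a short "descent from the dent" matrix and a MacMahon matrix for an undented half-hexagon of size $b+1$ should extract $\p_{|}(a,b+1)/(2a+u)_{\underline u}$ as the intended factor. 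Should this direct approach prove too intricate, a backup plan is to apply Kuo condensation (Lemma \ref{kuo}) to $V_{a,b,1,(u)}$ with the four boundary triangles placed just north and south of the dent, producing a three-term recurrence in $u$ that is verified to be satisfied by both $\f$ and $F$, and then to induct on $u$ outward from the already-established endpoints $u = 2$ and $u = b+2$.
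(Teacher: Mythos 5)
Your handling of $n=0$ and of the endpoint cases $u=2$ and $u=b+2$ coincides with the paper's (the paper also treats $b=0,1$ separately, but those are subsumed by your endpoints). The gap is in the range $2<u<b+2$. The product identity you derive from Theorem \ref{paper1}, Theorem \ref{ciucu}, and the identity $\p(2a,b+1,b+1)=\p_-(a,b+1)\p_|(a,b+1)$ is correct, but it only pins down the product $\f_{b,1,(u)}(a)\cdot\overline{f}^+_{b,1,(u)}(a)$; distributing its linear factors so that $\f$ receives exactly $\p_|(a,b+1)/(2a+u)_{\underline{u}}$ is precisely the content of the lemma. (Indeed, the splitting statement $\overline{f}^+\aequiv \p_-/\prod(2a+u_i)_{\underline{u_i}}$ is Corollary \ref{mainplus}, which the paper deduces \emph{from} Theorem \ref{main}, not the other way around.) Your proposed mechanisms do not close this. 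The degree count is right ($\deg_a \f_{b,1,(u)}=b(b-1)/2+u-2=\deg_a F_{b,1,(u)}$), but ``matching roots with multiplicity'' would require showing that $\f_{b,1,(u)}(a)$ vanishes at the negative and half-integer roots of $F_{b,1,(u)}(a)$, values of $a$ at which no region exists, so there is no combinatorial argument available and you supply no algebraic one. And Cauchy--Binet (Theorem \ref{CauchyBinet}) applied to the path matrix produces a \emph{sum} of products of minors over column subsets, not a factorization of the determinant, so it cannot simply ``extract'' $\p_|(a,b+1)/(2a+u)_{\underline{u}}$ as a factor; in the paper Cauchy--Binet is used exactly where that sum structure is wanted (the tubey regions in Theorem \ref{relationgeneral}), never to factor a single path determinant.

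Your backup plan is the paper's actual route, but as written it is only a strategy statement, and the omitted details are where the work lies. The paper does not condense $V_{a,b,1,(u)}$ with triangles ``just north and south of the dent'': it condenses the filled-in region $R=V_{a,b,1,\vec\emptyset}$ via the second Kuo identity (equation \ref{eqn2}), taking $\alpha$ at the dent position, $\beta$ the second triangle from the top of the northwest side, $\gamma$ the southmost down-pointing triangle on the eastern side, and $\delta$ the bottom triangle of the northwest side. One of the four products drops out because $R_{\gamma}$ and $R_{\alpha,\beta,\delta}$ are unbalanced, and the remaining regions, after removing forced lozenges, are half-hexagons or one-dent half-hexagons with smaller parameters, yielding $\f_{b,1,(u)}(a)\,\p_|(a+1,b-1)=\p_|(a+1,b)\,\f_{b-1,1,(u)}(a)+\p_|(a,b+1)\,\f_{b-2,1,(u-2)}(a+1)$; note this shifts $b$ (and $u$) rather than giving a pure three-term recurrence in $u$, so the induction bookkeeping is not simply ``outward from the endpoints in $u$.'' One must then verify, using $\p_|(a,b)/\p_|(a,b-1)\aequiv(a+b-1)(2a+b)_{b-2}$, that $F$ satisfies the same relation up to $\aequiv$, handling the possibility that the last term is identically zero. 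Your sketch specifies none of this --- which Kuo identity, which region is condensed, why the unbalanced/untileable terms may be discarded, why the resulting regions are again members of families with inductively known tiling functions, and why $F$ obeys the same recurrence --- so as it stands the case $2<u<b+2$ is not proved.
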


\begin{proof}
    We first address some specific cases.
    \begin{itemize}
        \item The region has no dents: in this case,
        $$f_{b, 0, \vec \emptyset}(a) = M(V_{a,b,0, \vec \emptyset}) = P_|(a,b) = F_{b,0,\vec \emptyset}(a).$$
        \item $u=2$; then there are forced lozenges at the top of the region, and
        $$f_{b,1,(2)}(a) = f_{b,0,\vec \emptyset}(a) \aequiv F_{b,0, \vec \emptyset}(a) \aequiv F_{b,1,(2)}(a)$$ by reference to the case with no dents and Lemma \ref{case}.1.
        \item $u=b+2$; then there are lozenges along the southwest boundary of the region, and
        $$f_{b,1,(b+2)}(a) = f_{b+1,0,\vec \emptyset}(a) \aequiv F_{b+1,0, \vec \emptyset} \aequiv F_{b,1,(b+2)}$$
        by reference to the case with no dents and Lemma \ref{case}.2.
        \item $b=0$; then the region is only tileable if $u=2$; in this case the region has one tiling, and $F_{0,1,(u)}(a)=1$ also.
        \item $b=1$; then either $u=2$ or $u=3=b+2$, both cases of which are addressed earlier.
    \end{itemize}
   With these cases addressed, we proceed to discuss regions with one dent, $b \geq 2$, and $2 < u < b+2$. We proceed by induction on $u$, using the cases addressed above as base cases.
   
 Assume that relation \ref{eq:main} holds for dented half-hexagons $V_{a,b,1,(v)}$ with $v<u$, and consider a tileable family of dented half-hexagons $\{V_{a,b,1,(u)}: a \in \mathbb Z\}.$
   
   Regard $V_{a,b,1,(u)}$ as a subregion of the (unbalanced) region $R := V_{a,b,1,\vec \emptyset}$.  With each region $R$ say: \alf\ denotes the $u$th triangle from the top lying along the northwest side; \bet\ denotes the 2nd triangle from the top lying along the northwest side; \gam\ denotes the southmost down-pointing triangle along the eastern side; \del\ denotes the southmost triangle along the northwest side. These positions are depicted within Figure \ref{fig:kuo1}.
   
\begin{figure}
    \begin{minipage}[c]{\textwidth}
    \centering

    \begin{tikzpicture}[x={(0:.45cm)},y={(120:.45cm)},z={(240:.45cm)}]
	\pgfmathtruncatemacro{\a}{4}
	\pgfmathtruncatemacro{\b}{2}
	\pgfmathtruncatemacro{\c}{2}
	\pgfmathtruncatemacro{\t}{6}

	\pgfmathtruncatemacro{\longdiag}{\a+\b+\c+\t+\t}
	
	\draw[very thick] (0,0,0) --++(0,0,-1) --++(0,1,0) --++(0,0,-1) --++
	(-4,0,0)
	-- ++(0,0,3)
	-- ++ (0,-1,0) -- ++(-1,0,0)
	-- ++ (0,0,1) -- ++ (0,0,0) -- ++ (0,-3,0) --++ 
	(5,0,0)
	--++(0,1,0)
	--++(0,0,-1) --++(0,1,0)
	--++(0,0,-1) -- cycle;
	
	\draw[clip] (0,0,0) --++(0,0,-1) --++(0,1,0) --++(0,0,-1) --++
	(-4,0,0)
	-- ++(0,0,4)
	-- ++ (0,0,1) -- ++ (0,0,0) -- ++ (0,-3,0) --++ 
	(5,0,0)
	--++(0,1,0)
	--++(0,0,-1) --++(0,1,0)
	--++(0,0,-1) -- cycle;

	\foreach \i in {0,...,\longdiag}{
        \draw[-] (\i,0,\c+\t) -- ++(0,\longdiag,0);
        \draw[-] (\a+\t,\i,0) -- ++(0,0,\longdiag);
        \draw[-] (0,\b+\t,\i) -- ++(\longdiag,0,0);
    }
	
	\node (A) at (-4,.66,1.33) {\alf};
    \node (B) at (-4,.66,-.66) {\bet};
	\node (C) at (-.66,-2.33,1) {\gam};
	\node (D) at (-4,.66,2.33) {\del};
	
    \end{tikzpicture}

    \caption{$R$ defined with respect to the region $V_{4,3,1,(4)}$ with \alf, \bet, \gam, \del \  marked. The outline of $V_{4,3,1,(4)}$ is drawn thickly. Note $R$ is the original region with its dent ``filled in.''}
       \labelnote{fig:kuo1}
    \end{minipage}
\end{figure}
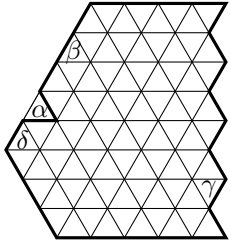

   We will apply Kuo's condensation method (Lemma \ref{kuo}, equation \ref{eqn2}) to $R$, \alf, \bet, \gam, \del. We use subscript to denote triangles that are removed from $R$, so that $R_\alf = V_{a,b,1,(u)}$ and the condensation formula reads
   \begin{equation}\labelnote{eq:kuo1}
       M(R_\alf)M(R_{\bet,\gam,\del}) + M(R_\gam)M(R_{\alf,\bet,\del})= M(R_\bet)M(R_{\alf,\gam,\del}) + M(R_\del)M(R_{\alf,\bet,\gam}).
   \end{equation}
   
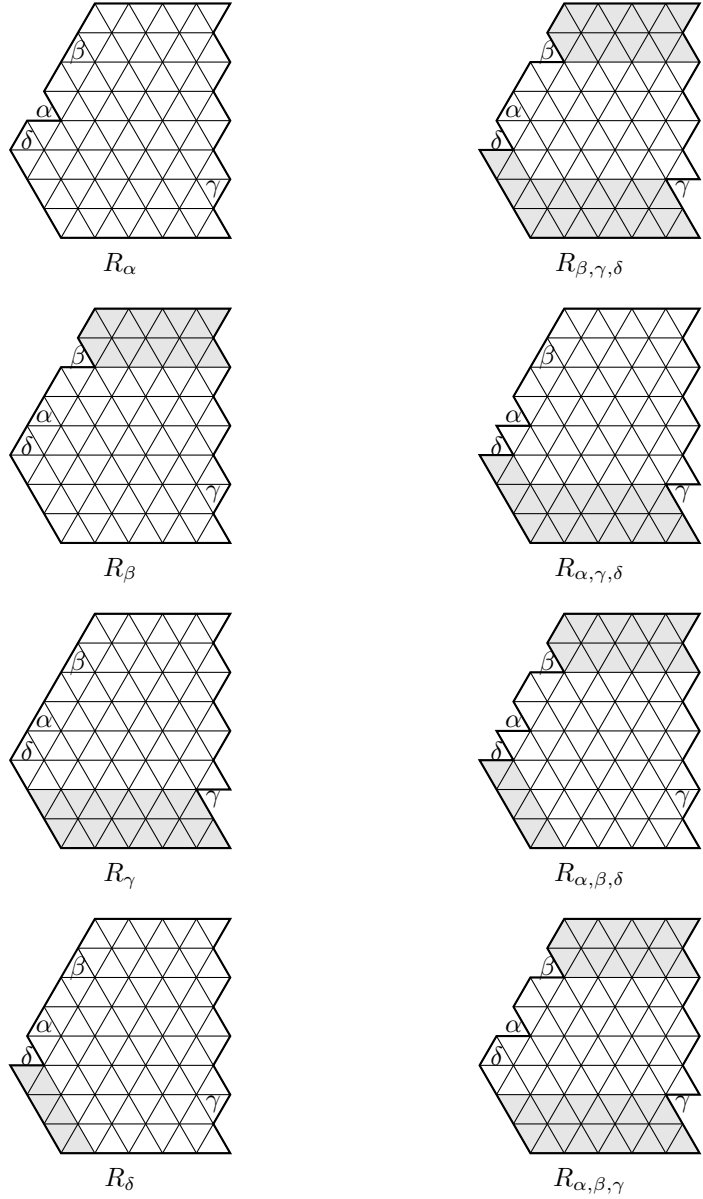
\begin{figure}
    \begin{minipage}[c]{\textwidth}
    \centering
    
    \begin{multicols}{2}

    \begin{tikzpicture}[x={(0:.45cm)},y={(120:.45cm)},z={(240:.45cm)}]
	\pgfmathtruncatemacro{\a}{4}
	\pgfmathtruncatemacro{\b}{2}
	\pgfmathtruncatemacro{\c}{2}
	\pgfmathtruncatemacro{\t}{6}
	
	\node (A) at (-4,.66,1.33) {\alf};
    \node (B) at (-4,.66,-.66) {\bet};
	\node (C) at (-.66,-2.33,1) {\gam};
	\node (D) at (-4,.66,2.33) {\del};
	
	\pgfmathtruncatemacro{\longdiag}{\a+\b+\c+\t+\t}
	
	\draw[thick] (0,0,0) --++(0,0,-1) --++(0,1,0) --++(0,0,-1) --++
	(-4,0,0)
	-- ++(0,0,3)
	-- ++ (0,-1,0) -- ++(-1,0,0)
	-- ++ (0,0,1) -- ++ (0,0,0) -- ++ (0,-3,0) --++ 
	(5,0,0)
	--++(0,1,0)
	--++(0,0,-1) --++(0,1,0)
	--++(0,0,-1) -- cycle;
	
	\draw[clip] (0,0,0) --++(0,0,-1) --++(0,1,0) --++(0,0,-1) --++
	(-4,0,0)
	-- ++(0,0,3)
	-- ++ (0,-1,0) -- ++(-1,0,0)
	-- ++ (0,0,1) -- ++ (0,0,0) -- ++ (0,-3,0) --++ 
	(5,0,0)
	--++(0,1,0)
	--++(0,0,-1) --++(0,1,0)
	--++(0,0,-1) -- cycle;

	\foreach \i in {0,...,\longdiag}{
        \draw[-] (\i,0,\c+\t) -- ++(0,\longdiag,0);
        \draw[-] (\a+\t,\i,0) -- ++(0,0,\longdiag);
        \draw[-] (0,\b+\t,\i) -- ++(\longdiag,0,0);
    }
	
    \end{tikzpicture}
    
    $R_\alf$
    
    \columnbreak

    \begin{tikzpicture}[x={(0:.45cm)},y={(120:.45cm)},z={(240:.45cm)}]
	\pgfmathtruncatemacro{\a}{4}
	\pgfmathtruncatemacro{\b}{2}
	\pgfmathtruncatemacro{\c}{2}
	\pgfmathtruncatemacro{\t}{6}
	
	\pgfmathtruncatemacro{\longdiag}{\a+\b+\c+\t+\t}
	
	\node (A) at (-4,.66,1.33) {\alf};
    \node (B) at (-4,.66,-.66) {\bet};
	\node (C) at (-.66,-2.33,1) {\gam};
	\node (D) at (-4,.66,2.33) {\del};
	
	\fill[black!10] (-5,0,-2) --++(0,-1,0)--++(4,0,0)--++(0,1,0)--++(0,0,-1)--++(-4,0,0)-- cycle;
	
	\fill[black!10] (0,-1,2) --++(-5,0,0)--++(0,-2,0)--++(5,0,0)-- cycle;
	
	\fill[black!10] (-5,0,2) --++(0,-3,0)--++(1,0,0)--++(0,3,0)--cycle;

	\draw[thick] (0,0,0) --++(0,0,-1) --++(0,1,0) --++(0,0,-1) --++
	(-4,0,0)
	-- ++(0,0,1)
	-- ++ (0,-1,0) -- ++(-1,0,0)
	-- ++(0,0,2)
	-- ++ (0,-1,0) -- ++(-1,0,0)
	-- ++ (0,0,0) -- ++ (0,0,0) -- ++ (0,-3,0) --++ 
	(5,0,0)
	--++(0,2,0)
	--++(1,0,0) --++(0,1,0)
	--++(0,0,-1) -- cycle;
	
	\draw[clip] (0,0,0) --++(0,0,-1) --++(0,1,0) --++(0,0,-1) --++
	(-4,0,0)
	-- ++(0,0,1)
	-- ++ (0,-1,0) -- ++(-1,0,0)
	-- ++(0,0,2)
	-- ++ (0,-1,0) -- ++(-1,0,0)
	-- ++ (0,0,0) -- ++ (0,0,0) -- ++ (0,-3,0) --++ 
	(5,0,0)
	--++(0,2,0)
	--++(1,0,0) --++(0,1,0)
	--++(0,0,-1) -- cycle;

	\foreach \i in {0,...,\longdiag}{
        \draw[-] (\i,0,\c+\t) -- ++(0,\longdiag,0);
        \draw[-] (\a+\t,\i,0) -- ++(0,0,\longdiag);
        \draw[-] (0,\b+\t,\i) -- ++(\longdiag,0,0);
    }
	
	
    \end{tikzpicture}
    
    $R_{\bet,\gam,\del}$
    
    \end{multicols}
    
    \begin{multicols}{2}

    \begin{tikzpicture}[x={(0:.45cm)},y={(120:.45cm)},z={(240:.45cm)}]
	\pgfmathtruncatemacro{\a}{4}
	\pgfmathtruncatemacro{\b}{2}
	\pgfmathtruncatemacro{\c}{2}
	\pgfmathtruncatemacro{\t}{6}
	
	\pgfmathtruncatemacro{\longdiag}{\a+\b+\c+\t+\t}
	
	\node (A) at (-4,.66,1.33) {\alf};
    \node (B) at (-4,.66,-.66) {\bet};
	\node (C) at (-.66,-2.33,1) {\gam};
	\node (D) at (-4,.66,2.33) {\del};
	
	\fill[black!10] (-5,0,-2) --++(0,-1,0)--++(4,0,0)--++(0,1,0)--++(0,0,-1)--++(-4,0,0)-- cycle;
	
	\draw[thick] (0,0,0) --++(0,0,-1) --++(0,1,0) --++(0,0,-1) --++
	(-4,0,0)
	-- ++(0,0,1)
	-- ++ (0,-1,0) -- ++(-1,0,0)
	-- ++(0,0,3)
	-- ++ (0,0,0) -- ++ (0,0,0) -- ++ (0,-3,0) --++ 
	(5,0,0)
	--++(0,1,0)
	--++(0,0,-1) --++(0,1,0)
	--++(0,0,-1) -- cycle;
	
	\draw[clip]  (0,0,0) --++(0,0,-1) --++(0,1,0) --++(0,0,-1) --++
	(-4,0,0)
	-- ++(0,0,1)
	-- ++ (0,-1,0) -- ++(-1,0,0)
	-- ++(0,0,3)
	-- ++ (0,0,0) -- ++ (0,0,0) -- ++ (0,-3,0) --++ 
	(5,0,0)
	--++(0,1,0)
	--++(0,0,-1) --++(0,1,0)
	--++(0,0,-1) -- cycle;

	\foreach \i in {0,...,\longdiag}{
        \draw[-] (\i,0,\c+\t) -- ++(0,\longdiag,0);
        \draw[-] (\a+\t,\i,0) -- ++(0,0,\longdiag);
        \draw[-] (0,\b+\t,\i) -- ++(\longdiag,0,0);
    }

    \end{tikzpicture}
    
    $R_\bet$
    
    \columnbreak

    \begin{tikzpicture}[x={(0:.45cm)},y={(120:.45cm)},z={(240:.45cm)}]
	\pgfmathtruncatemacro{\a}{4}
	\pgfmathtruncatemacro{\b}{2}
	\pgfmathtruncatemacro{\c}{2}
	\pgfmathtruncatemacro{\t}{6}
	
	\pgfmathtruncatemacro{\longdiag}{\a+\b+\c+\t+\t}

	\node (A) at (-4,.66,1.33) {\alf};
    \node (B) at (-4,.66,-.66) {\bet};
	\node (C) at (-.66,-2.33,1) {\gam};
	\node (D) at (-4,.66,2.33) {\del};
	
	\fill[black!10] (0,-1,2) --++(-5,0,0)--++(0,-2,0)--++(5,0,0)-- cycle;
	
	\fill[black!10] (-5,0,2) --++(0,-3,0)--++(1,0,0)--++(0,3,0)--cycle;

	\draw[thick] (0,0,0) --++(0,0,-1) --++(0,1,0) --++(0,0,-1) --++
	(-4,0,0)
	-- ++(0,0,3)
	-- ++ (0,-1,0) -- ++(-1,0,0)
	-- ++(0,0,0)
	-- ++ (0,-1,0) -- ++(-1,0,0)
	-- ++ (0,0,0) -- ++ (0,0,0) -- ++ (0,-3,0) --++ 
	(5,0,0)
	--++(0,2,0)
	--++(1,0,0) --++(0,1,0)
	--++(0,0,-1) -- cycle;
	
	\draw[clip] (0,0,0) --++(0,0,-1) --++(0,1,0) --++(0,0,-1) --++
	(-4,0,0)
	-- ++(0,0,3)
	-- ++ (0,-1,0) -- ++(-1,0,0)
	-- ++(0,0,0)
	-- ++ (0,-1,0) -- ++(-1,0,0)
	-- ++ (0,0,0) -- ++ (0,0,0) -- ++ (0,-3,0) --++ 
	(5,0,0)
	--++(0,2,0)
	--++(1,0,0) --++(0,1,0)
	--++(0,0,-1) -- cycle;

	\foreach \i in {0,...,\longdiag}{
        \draw[-] (\i,0,\c+\t) -- ++(0,\longdiag,0);
        \draw[-] (\a+\t,\i,0) -- ++(0,0,\longdiag);
        \draw[-] (0,\b+\t,\i) -- ++(\longdiag,0,0);
    }

    \end{tikzpicture}
    
    $R_{\alf,\gam,\del}$
    
    \end{multicols}

    \begin{multicols}{2}

    \begin{tikzpicture}[x={(0:.45cm)},y={(120:.45cm)},z={(240:.45cm)}]
	\pgfmathtruncatemacro{\a}{4}
	\pgfmathtruncatemacro{\b}{2}
	\pgfmathtruncatemacro{\c}{2}
	\pgfmathtruncatemacro{\t}{6}
	
	\pgfmathtruncatemacro{\longdiag}{\a+\b+\c+\t+\t}

	\node (A) at (-4,.66,1.33) {\alf};
    \node (B) at (-4,.66,-.66) {\bet};
	\node (C) at (-.66,-2.33,1) {\gam};
	\node (D) at (-4,.66,2.33) {\del};
	
	\fill[black!10] (0,-1,2) --++(-5,0,0)--++(0,-2,0)--++(5,0,0)-- cycle;

	\draw[thick] (0,0,0) --++(0,0,-1) --++(0,1,0) --++(0,0,-1) --++
	(-4,0,0)
	-- ++(0,0,2)
	-- ++(0,0,3)
	-- ++ (0,0,0) -- ++ (0,0,0) -- ++ (0,-3,0) --++ 
	(5,0,0)
	--++(0,2,0)
	--++(1,0,0) --++(0,1,0)
	--++(0,0,-1) -- cycle;
	
	\draw[clip] (0,0,0) --++(0,0,-1) --++(0,1,0) --++(0,0,-1) --++
	(-4,0,0)
	-- ++(0,0,2)
	-- ++(0,0,3)
	-- ++ (0,0,0) -- ++ (0,0,0) -- ++ (0,-3,0) --++ 
	(5,0,0)
	--++(0,2,0)
	--++(1,0,0) --++(0,1,0)
	--++(0,0,-1) -- cycle;

	\foreach \i in {0,...,\longdiag}{
        \draw[-] (\i,0,\c+\t) -- ++(0,\longdiag,0);
        \draw[-] (\a+\t,\i,0) -- ++(0,0,\longdiag);
        \draw[-] (0,\b+\t,\i) -- ++(\longdiag,0,0);
    }

    \end{tikzpicture}
    
    $R_\gam$
    
    \columnbreak

    \begin{tikzpicture}[x={(0:.45cm)},y={(120:.45cm)},z={(240:.45cm)}]
	\pgfmathtruncatemacro{\a}{4}
	\pgfmathtruncatemacro{\b}{2}
	\pgfmathtruncatemacro{\c}{2}
	\pgfmathtruncatemacro{\t}{6}
	
	\pgfmathtruncatemacro{\longdiag}{\a+\b+\c+\t+\t}

	\node (A) at (-4,.66,1.33) {\alf};
    \node (B) at (-4,.66,-.66) {\bet};
	\node (C) at (-.66,-2.33,1) {\gam};
	\node (D) at (-4,.66,2.33) {\del};
	
	\fill[black!10] (-5,0,-2) --++(0,-1,0)--++(4,0,0)--++(0,1,0)--++(0,0,-1)--++(-4,0,0)-- cycle;
	
	\fill[black!10] (-5,0,2) --++(0,-3,0)--++(1,0,0)--++(0,3,0)--cycle;

	\draw[thick] (0,0,0) --++(0,0,-1) --++(0,1,0) --++(0,0,-1) --++
	(-4,0,0)
	-- ++(0,0,1)
	-- ++ (0,-1,0) -- ++(-1,0,0)
	-- ++(0,0,1)
	-- ++ (0,-1,0) -- ++(-1,0,0)
	-- ++ (0,-1,0) -- ++(-1,0,0)
	-- ++ (0,0,0) -- ++ (0,0,0) -- ++ (0,-3,0) --++ 
	(5,0,0)
	--++(0,1,0)
	--++(0,0,-1) --++(0,1,0)
	--++(0,0,-1) -- cycle;
	
	\draw[clip] (0,0,0) --++(0,0,-1) --++(0,1,0) --++(0,0,-1) --++
	(-4,0,0)
	-- ++(0,0,1)
	-- ++ (0,-1,0) -- ++(-1,0,0)
	-- ++(0,0,1)
	-- ++ (0,-1,0) -- ++(-1,0,0)
	-- ++ (0,-1,0) -- ++(-1,0,0)
	-- ++ (0,0,0) -- ++ (0,0,0) -- ++ (0,-3,0) --++ 
	(5,0,0)
	--++(0,1,0)
	--++(0,0,-1) --++(0,1,0)
	--++(0,0,-1) -- cycle;

	\foreach \i in {0,...,\longdiag}{
        \draw[-] (\i,0,\c+\t) -- ++(0,\longdiag,0);
        \draw[-] (\a+\t,\i,0) -- ++(0,0,\longdiag);
        \draw[-] (0,\b+\t,\i) -- ++(\longdiag,0,0);
    }

    \end{tikzpicture}
    
    $R_{\alf,\bet,\del}$
    
    \end{multicols}

    \begin{multicols}{2}

    \begin{tikzpicture}[x={(0:.45cm)},y={(120:.45cm)},z={(240:.45cm)}]
	\pgfmathtruncatemacro{\a}{4}
	\pgfmathtruncatemacro{\b}{2}
	\pgfmathtruncatemacro{\c}{2}
	\pgfmathtruncatemacro{\t}{6}
	
	\pgfmathtruncatemacro{\longdiag}{\a+\b+\c+\t+\t}
	
	\node (A) at (-4,.66,1.33) {\alf};
    \node (B) at (-4,.66,-.66) {\bet};
	\node (C) at (-.66,-2.33,1) {\gam};
	\node (D) at (-4,.66,2.33) {\del};
	
	\fill[black!10] (-5,0,2) --++(0,-3,0)--++(1,0,0)--++(0,3,0)--cycle;

	\draw[thick] (0,0,0) --++(0,0,-1) --++(0,1,0) --++(0,0,-1) --++
	(-4,0,0)
	-- ++(0,0,2)
	-- ++(0,0,2)
	-- ++ (0,-1,0) -- ++(-1,0,0)
	-- ++ (0,0,0) -- ++ (0,0,0) -- ++ (0,-3,0) --++ 
	(5,0,0)
	--++(0,1,0)
	--++(0,0,-1) --++(0,1,0)
	--++(0,0,-1) -- cycle;
	
	\draw[clip] (0,0,0) --++(0,0,-1) --++(0,1,0) --++(0,0,-1) --++
	(-4,0,0)
	-- ++(0,0,1)
	-- ++ (0,-1,0) -- ++(-1,0,0)
	-- ++(0,0,2)
	-- ++ (0,-1,0) -- ++(-1,0,0)
	-- ++ (0,0,0) -- ++ (0,0,0) -- ++ (0,-3,0) --++ 
	(5,0,0)
	--++(0,1,0)
	--++(0,0,-1) --++(0,1,0)
	--++(0,0,-1) -- cycle;

	\foreach \i in {0,...,\longdiag}{
        \draw[-] (\i,0,\c+\t) -- ++(0,\longdiag,0);
        \draw[-] (\a+\t,\i,0) -- ++(0,0,\longdiag);
        \draw[-] (0,\b+\t,\i) -- ++(\longdiag,0,0);
    }

    \end{tikzpicture}
    
    $R_\del$
    
    \columnbreak

    \begin{tikzpicture}[x={(0:.45cm)},y={(120:.45cm)},z={(240:.45cm)}]
	\pgfmathtruncatemacro{\a}{4}
	\pgfmathtruncatemacro{\b}{2}
	\pgfmathtruncatemacro{\c}{2}
	\pgfmathtruncatemacro{\t}{6}
	
	\pgfmathtruncatemacro{\longdiag}{\a+\b+\c+\t+\t}
	
	\node (A) at (-4,.66,1.33) {\alf};
    \node (B) at (-4,.66,-.66) {\bet};
	\node (C) at (-.66,-2.33,1) {\gam};
	\node (D) at (-4,.66,2.33) {\del};
	
	\fill[black!10] (-5,0,-2) --++(0,-1,0)--++(4,0,0)--++(0,1,0)--++(0,0,-1)--++(-4,0,0)-- cycle;
	
	\fill[black!10] (0,-1,2) --++(-5,0,0)--++(0,-2,0)--++(5,0,0)-- cycle;

	\draw[thick] (0,0,0) --++(0,0,-1) --++(0,1,0) --++(0,0,-1) --++
	(-4,0,0)
	-- ++(0,0,1)
	-- ++ (0,-1,0) -- ++(-1,0,0)
	-- ++(0,0,1)
	-- ++ (0,-1,0) -- ++(-1,0,0)
	-- ++ (0,0,1) -- ++ (0,0,0) -- ++ (0,-3,0) --++ 
	(5,0,0)
	--++(0,2,0)
	--++(1,0,0) --++(0,1,0)
	--++(0,0,-1) -- cycle;
	
	\draw[clip] (0,0,0) --++(0,0,-1) --++(0,1,0) --++(0,0,-1) --++
	(-4,0,0)
	-- ++(0,0,1)
	-- ++ (0,-1,0) -- ++(-1,0,0)
	-- ++(0,0,1)
	-- ++ (0,-1,0) -- ++(-1,0,0)
	-- ++ (0,0,1) -- ++ (0,0,0) -- ++ (0,-3,0) --++ 
	(5,0,0)
	--++(0,2,0)
	--++(1,0,0) --++(0,1,0)
	--++(0,0,-1) -- cycle;

	\foreach \i in {0,...,\longdiag}{
        \draw[-] (\i,0,\c+\t) -- ++(0,\longdiag,0);
        \draw[-] (\a+\t,\i,0) -- ++(0,0,\longdiag);
        \draw[-] (0,\b+\t,\i) -- ++(\longdiag,0,0);
    }

    \end{tikzpicture}
    
    $R_{\alf,\bet,\gam}$
    
    \end{multicols}

    \caption{Each of the regions defined by removing an odd subset of $\{\alpha, \beta, \gamma, \delta\}$ from $R$. Each region with $\beta, \gamma, \delta$ removed has forced lozenges along the top, bottom, and southwest respectively.}
       \labelnote{fig:kuoregions1}
    \end{minipage}
\end{figure}
   
   It is worth noting that $R_{\alpha,\beta,\gamma}$ may be untileable, and $R_{\gamma}$ and $R_{\alpha, \beta, \delta}$ are unbalanced. Since each of the other regions among these is tileable and has forced lozenges that leave half-hexagons or dented half-hexagons once removed (as depicted in Figure \ref{fig:kuoregions1}), we can rewrite equation \ref{eq:kuo1} in context.
   $$f_{b,1,(u)}(a) P_|(a+1,b-1) = P_|(a+1,b) f_{b-1,1,(u)}(a) + P_|(a,b+1) f_{b-2,1,(u-2)}(a+1)$$
   
   By the inductive hypothesis $f_{b-1,1,(u)}(a) \aequiv F_{b-1,1,(u)}(a)$ and\\ $f_{b-2,1,(u-2)}(a) \aequiv F_{b-2,1,(u-2)}(a)$.
   By relations $\ref{eq:equiv1}$ and $\ref{eq:equiv2}$, in order to show $f_{b,1,(u)}(a) \aequiv F_{b,1,(u)}(a)$ it suffices to show\footnote{In the case that $R_{\alpha, \beta, \gamma}$ is untileable, the last term in relation \ref{eq:trio1} is identically zero. In this case it is neither possible nor necessary to show it is $\aequiv$-equivalent to the other terms, so we proceed assuming it is not identically zero.} that
   \begin{equation}\labelnote{eq:trio1}
       F_{b,1,(u)}(a) P_|(a+1,b-1) \aequiv P_|(a+1,b) F_{b-1,1,(u)}(a) \aequiv P_|(a,b+1) F_{b-2,1,(u-2)}(a+1).
   \end{equation}
   
   We check that the first and last products are $\aequiv$-equivalent by expanding $F$.
   \begin{align*}
       F_{b,1,(u)}(a) P_|(a+1,b-1) &\aequiv P_|(a,b+1) F_{b-2,1,(u-2)}(a+1)\\[10pt]
       \dfrac{P_|(a,b+1)}{(2a+u)_{b+2-u}} P_|(a+1,b-1) &\aequiv P_|(a,b+1) \dfrac{P_|(a+1,b-1)}{(2a+u)_{b+2-u}}
   \end{align*}
   
   It remains to show that $P_|(a+1,b) F_{b-1,1,(u)}(a)$ is $\aequiv$-equivalent to these. We will make liberal use of the following identity, which is straightforward to check.
   \begin{equation}
       \dfrac{P_|(a,b)}{P_|(a,b-1)} \aequiv (a+b-1)(2a+b)_{b-2}
   \end{equation}
   We apply this identity twice in turn.
   \begin{align}
       \dfrac{P_|(a,b+1)}{P_|(a,b)(2a+b+1)} \aequiv &  (a+b)(2a+b+2)_{b-2} \labelnote{eq:techlem1}\\[10pt]
       \dfrac{P_|(a+1,b)}{P_|(a+1,b-1)} \aequiv & (a+b)(2a+b+2)_{b-2} \labelnote{eq:techlem2}
   \end{align}
   Now we rewrite the first and second product from \ref{eq:trio1} explicitly.
   \begin{equation}
       \dfrac{P_|(a,b+1)}{(2a+u)_{b+2-u}} P_|(a+1,b-1) = P_|(a+1)\dfrac{P_|(a,b)}{(2a+u)_{b+1-u}}
   \end{equation}
   We multiply both sides by $\frac{(2a+u)_{b+1-u}}{P_|(a,b)P_|(a+1,b-1)}$.
   \begin{equation}
       \dfrac{P_|(a,b+1)}{P_|(a,b)(2a+b+1)} \aequiv \dfrac{P_|(a+1,b)}{P_|(a+1,b-1)}
   \end{equation}
   We apply relations \ref{eq:techlem1} and \ref{eq:techlem2} to get
   \begin{equation}
       \dfrac{(a+b)(2a+b+1)_{b-1}}{(2a+b+1)} \aequiv (a+b)(2a+b+2)_{b-2}
   \end{equation}
   which holds since both sides are equal. Thus, $f_{b,1,(u)}(a) \aequiv F_{b,1,(u)}(a)$ whenever $2 \leq u \leq b+2n$.

\end{proof}

Our full proof of Theorem \ref{main} is quite similar to the previous proof of the special case.

\begin{proof}[Proof of Theorem \ref{main}]
    We will first show relation \eqref{eq:main} holds in general, by inducting on the number of dents, with regions with one or no dents as a base case which is covered by Lemma \ref{basecase}.
    
    For our inductive hypothesis, assume that relation \eqref{eq:main} holds for dented half-hexagons with fewer than $n$ dents, and consider a tileable family of dented half-hexagons $\{V_{a,b,n,(u_i)_1^n}: a \in \mathbb Z\}$. 
    
    In the case $u_1=2$, there are forced lozenges at the top of the region, and
    $$f_{b,n,(u_i)_1^n}(a) = f_{b,n-1,(u_i-2)_2^n}(a+1) \aequiv F_{b,n-1,(u_i-2)_2^n}(a+1) \aequiv F_{b,n,(u_i)_1^n}(a)$$
    by the inductive hypothesis and Lemma \ref{case}.1.
    
    In the case $u_n=b+2n$, there are forced lozenges along the southwest boundary of the region, and
    $$f_{b,n,(u_i)_1^{n}}(a) = f_{b+1,n-1,(u_i)_1^{n-1}}(a) \aequiv F_{b+1,n-1,(u_i)_1^{n-1}}(a) \aequiv F_{b,n,(u_i)_1^{n}}(a) $$
    by the inductive hypothesis and Lemma \ref{case}.2.
    
    In the case that $2 < u_1 < u_n < b+2n$ we proceed using Kuo's condensation method. Regard each $V_{a,b,n,(u_i)_1^n}$ as a subregion of $R:=V_{a,b,n,(u_i)_2^{n-1}}$, that is, the original region without its first and last dent. Within each $R$, say: {\alf} denotes the second triangle from the top along the northwest side; {\bet} denotes the $u_1$st triangle from the top along the northwest side; {\gam} denotes the $u_n$th triangle from the top along the northwest side; and {\del} denotes the bottom triangle along the northwest side.
    These positions are depicted within Figure \ref{fig:kuo2}.
   
\begin{figure}
    \begin{minipage}[c]{\textwidth}
    \centering

    \begin{tikzpicture}[x={(0:.45cm)},y={(120:.45cm)},z={(240:.45cm)}]
	\pgfmathtruncatemacro{\a}{4}
	\pgfmathtruncatemacro{\b}{3}
	\pgfmathtruncatemacro{\c}{3}
	\pgfmathtruncatemacro{\t}{6}

	\pgfmathtruncatemacro{\longdiag}{\a+\b+\c+\t+\t}
	
	\node (A) at (-4.33,.33,-1){\alf};
	\node (B) at (-4.33,.33,1){\bet};
	\node (G) at (-4.33,.33,4){\gam};
	\node (D) at (-4.33,.33,6){\del};
	
	\draw[very thick] (0,0,0) --++(0,0,-1) --++(0,1,0) --++(0,0,-1) --++
	(-4,0,0)
	-- ++(0,0,3)
	-- ++ (0,-1,0) -- ++(-1,0,0)
	-- ++ (0,0,0)
	-- ++ (0,-1,0) -- ++(-1,0,0)
	-- ++ (0,0,1)
	-- ++ (0,-1,0) -- ++(-1,0,0) 
	--++ (0,0,2) -- ++ (0,-3,0) --++ 
	(7,0,0)
	--++(0,1,0)
	--++(0,0,-1) --++(0,1,0)
	--++(0,0,-1) --++(0,1,0)
	--++(0,0,-1) --++(0,1,0)
	--++(0,0,-1) -- cycle;
	
	\draw[clip] (0,0,0) --++(0,0,-1) --++(0,1,0) --++(0,0,-1) --++
	(-4,0,0)
	-- ++(0,0,4)
	-- ++ (0,-1,0) -- ++(-1,0,0)
	-- ++ (0,0,2)
	-- ++ (0,0,2)
	--++ (0,0,0) -- ++ (0,-3,0) --++ 
	(7,0,0)
	--++(0,1,0)
	--++(0,0,-1) --++(0,1,0)
	--++(0,0,-1) --++(0,1,0)
	--++(0,0,-1) --++(0,1,0)
	--++(0,0,-1) -- cycle;

	\foreach \i in {0,...,\longdiag}{
        \draw[-] (\i-1,0,\c+\t) -- ++(0,\longdiag,0);
        \draw[-] (\a+\t,\i,0) -- ++(0,0,\longdiag);
        \draw[-] (0,\b+\t,\i) -- ++(\longdiag,0,0);
    }

    \end{tikzpicture}

    \caption{$R$ defined with respect to the region $V_{4,3,3,(4,5,7)}$ with that region's outline drawn thickly and \alf, \bet, \gam, \del\  marked. Note, $R$ is the original region with its first and last dent ``filled in.''}
       \labelnote{fig:kuo2}
    \end{minipage}
\end{figure}
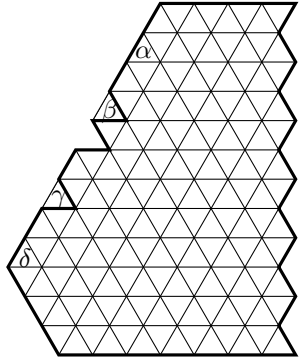
   
      We will apply Kuo's condensation method (Lemma \ref{kuo}, equation \ref{eqn}) to $R$, \alf, \bet, \gam, \del. We use subscript to denote triangles that are removed from $R$, so that $R_{\bet,\gam} = V_{a,b,n, (u_i)_1^n}$ and the condensation formula reads
   \begin{equation}\labelnote{eq:kuo2}
       M(R_{\bet,\gam})M(R_{\alf,\del}) = M(R_{\alf,\gam})M(R_{\bet,\del}) - M(R_{\alf,\bet})M(R_{\gam,\del})
   \end{equation}
   
\begin{figure}
    \begin{minipage}[c]{\textwidth}
    \centering
    
    \begin{multicols}{2}
    \begin{tikzpicture}[x={(0:.45cm)},y={(120:.45cm)},z={(240:.45cm)}]
	\pgfmathtruncatemacro{\a}{4}
	\pgfmathtruncatemacro{\b}{3}
	\pgfmathtruncatemacro{\c}{3}
	\pgfmathtruncatemacro{\t}{6}
	
	\pgfmathtruncatemacro{\longdiag}{\a+\b+\c+\t+\t}
	
	\node (A) at (-4.33,.33,-1){\alf};
	\node (B) at (-4.33,.33,1){\bet};
	\node (G) at (-4.33,.33,4){\gam};
	\node (D) at (-4.33,.33,6){\del};
	
	\fill[black!10] (-5,0,-2) --++(0,-1,0)--++(4,0,0)--++(0,1,0)--++(0,0,-1)--++(-4,0,0)-- cycle;
	
	\draw[very thick] (0,0,0) --++(0,0,-1) --++(0,1,0) --++(0,0,-1) --++
	(-4,0,0)
	-- ++(0,0,1)
	-- ++ (0,-1,0) -- ++(-1,0,0)
	-- ++ (0,0,1)
	-- ++ (0,-1,0) -- ++(-1,0,0)
	-- ++ (0,0,0)
	-- ++ (0,-1,0) -- ++(-1,0,0) 
	--++ (0,0,4) -- ++ (0,-3,0) --++ 
	(7,0,0)
	--++(0,1,0)
	--++(0,0,-1) --++(0,1,0)
	--++(0,0,-1) --++(0,1,0)
	--++(0,0,-1) --++(0,1,0)
	--++(0,0,-1) -- cycle;
	
	\draw[clip] (0,0,0) --++(0,0,-1) --++(0,1,0) --++(0,0,-1) --++
	(-4,0,0)
	-- ++(0,0,1)
	-- ++ (0,-1,0) -- ++(-1,0,0)
	-- ++ (0,0,1)
	-- ++ (0,-1,0) -- ++(-1,0,0)
	-- ++ (0,0,0)
	-- ++ (0,-1,0) -- ++(-1,0,0) 
	--++ (0,0,4) -- ++ (0,-3,0) --++ 
	(7,0,0)
	--++(0,1,0)
	--++(0,0,-1) --++(0,1,0)
	--++(0,0,-1) --++(0,1,0)
	--++(0,0,-1) --++(0,1,0)
	--++(0,0,-1) -- cycle;

	\foreach \i in {0,...,\longdiag}{
        \draw[-] (\i-1,0,\c+\t) -- ++(0,\longdiag,0);
        \draw[-] (\a+\t,\i,0) -- ++(0,0,\longdiag);
        \draw[-] (0,\b+\t,\i) -- ++(\longdiag,0,0);
    }

    \end{tikzpicture}

    $R_{\alf,\bet}$
    
    \columnbreak

    \begin{tikzpicture}[x={(0:.45cm)},y={(120:.45cm)},z={(240:.45cm)}]
	\pgfmathtruncatemacro{\a}{4}
	\pgfmathtruncatemacro{\b}{3}
	\pgfmathtruncatemacro{\c}{3}
	\pgfmathtruncatemacro{\t}{6}
	
	\pgfmathtruncatemacro{\longdiag}{\a+\b+\c+\t+\t}
	
	\node (A) at (-4.33,.33,-1){\alf};
	\node (B) at (-4.33,.33,1){\bet};
	\node (G) at (-4.33,.33,4){\gam};
	\node (D) at (-4.33,.33,6){\del};
	
	\fill[black!10](-5,0,6)--++(0,-3,0)--++(1,0,0)--++(0,3,0)--cycle;
	
	\draw[very thick] (0,0,0) --++(0,0,-1) --++(0,1,0) --++(0,0,-1) --++
	(-4,0,0)
	-- ++(0,0,4)
	-- ++ (0,-1,0) -- ++(-1,0,0)
	-- ++ (0,0,1)
	-- ++ (0,-1,0) -- ++(-1,0,0)
	-- ++ (0,0,1)
	-- ++ (0,-1,0) -- ++(-1,0,0) 
	--++ (0,0,0) -- ++ (0,-3,0) --++ 
	(7,0,0)
	--++(0,1,0)
	--++(0,0,-1) --++(0,1,0)
	--++(0,0,-1) --++(0,1,0)
	--++(0,0,-1) --++(0,1,0)
	--++(0,0,-1) -- cycle;
	
	\draw[clip] (0,0,0) --++(0,0,-1) --++(0,1,0) --++(0,0,-1) --++
	(-4,0,0)
	-- ++(0,0,4)
	-- ++ (0,-1,0) -- ++(-1,0,0)
	-- ++ (0,0,1)
	-- ++ (0,-1,0) -- ++(-1,0,0)
	-- ++ (0,0,1)
	-- ++ (0,-1,0) -- ++(-1,0,0) 
	--++ (0,0,0) -- ++ (0,-3,0) --++ 
	(7,0,0)
	--++(0,1,0)
	--++(0,0,-1) --++(0,1,0)
	--++(0,0,-1) --++(0,1,0)
	--++(0,0,-1) --++(0,1,0)
	--++(0,0,-1) -- cycle;

	\foreach \i in {0,...,\longdiag}{
        \draw[-] (\i-1,0,\c+\t) -- ++(0,\longdiag,0);
        \draw[-] (\a+\t,\i,0) -- ++(0,0,\longdiag);
        \draw[-] (0,\b+\t,\i) -- ++(\longdiag,0,0);
    }

    \end{tikzpicture}
    
    $R_{\gam,\del}$
    \end{multicols}
    
    \begin{multicols}{2}
    \begin{tikzpicture}[x={(0:.45cm)},y={(120:.45cm)},z={(240:.45cm)}]
	\pgfmathtruncatemacro{\a}{4}
	\pgfmathtruncatemacro{\b}{3}
	\pgfmathtruncatemacro{\c}{3}
	\pgfmathtruncatemacro{\t}{6}
	
	\pgfmathtruncatemacro{\longdiag}{\a+\b+\c+\t+\t}
	
	\node (A) at (-4.33,.33,-1){\alf};
	\node (B) at (-4.33,.33,1){\bet};
	\node (G) at (-4.33,.33,4){\gam};
	\node (D) at (-4.33,.33,6){\del};
	
	\fill[black!10] (-5,0,-2) --++(0,-1,0)--++(4,0,0)--++(0,1,0)--++(0,0,-1)--++(-4,0,0)-- cycle;
	
	\draw[very thick] (0,0,0) --++(0,0,-1) --++(0,1,0) --++(0,0,-1) --++
	(-4,0,0)
	-- ++(0,0,1)
	-- ++ (0,-1,0) -- ++(-1,0,0)
	-- ++ (0,0,2)
	-- ++ (0,-1,0) -- ++(-1,0,0)
	-- ++ (0,0,1)
	-- ++ (0,-1,0) -- ++(-1,0,0) 
	--++ (0,0,2) -- ++ (0,-3,0) --++ 
	(7,0,0)
	--++(0,1,0)
	--++(0,0,-1) --++(0,1,0)
	--++(0,0,-1) --++(0,1,0)
	--++(0,0,-1) --++(0,1,0)
	--++(0,0,-1) -- cycle;
	
	\draw[clip] (0,0,0) --++(0,0,-1) --++(0,1,0) --++(0,0,-1) --++
	(-4,0,0)
	-- ++(0,0,1)
	-- ++ (0,-1,0) -- ++(-1,0,0)
	-- ++ (0,0,2)
	-- ++ (0,-1,0) -- ++(-1,0,0)
	-- ++ (0,0,1)
	-- ++ (0,-1,0) -- ++(-1,0,0) 
	--++ (0,0,2) -- ++ (0,-3,0) --++ 
	(7,0,0)
	--++(0,1,0)
	--++(0,0,-1) --++(0,1,0)
	--++(0,0,-1) --++(0,1,0)
	--++(0,0,-1) --++(0,1,0)
	--++(0,0,-1) -- cycle;

	\foreach \i in {0,...,\longdiag}{
        \draw[-] (\i-1,0,\c+\t) -- ++(0,\longdiag,0);
        \draw[-] (\a+\t,\i,0) -- ++(0,0,\longdiag);
        \draw[-] (0,\b+\t,\i) -- ++(\longdiag,0,0);
    }

    \end{tikzpicture}

    $R_{\alf,\gam}$
    
    \columnbreak

    \begin{tikzpicture}[x={(0:.45cm)},y={(120:.45cm)},z={(240:.45cm)}]
	\pgfmathtruncatemacro{\a}{4}
	\pgfmathtruncatemacro{\b}{3}
	\pgfmathtruncatemacro{\c}{3}
	\pgfmathtruncatemacro{\t}{6}
	
	\pgfmathtruncatemacro{\longdiag}{\a+\b+\c+\t+\t}
	
	\node (A) at (-4.33,.33,-1){\alf};
	\node (B) at (-4.33,.33,1){\bet};
	\node (G) at (-4.33,.33,4){\gam};
	\node (D) at (-4.33,.33,6){\del};
	
	\fill[black!10](-5,0,6)--++(0,-3,0)--++(1,0,0)--++(0,3,0)--cycle;
	
	\draw[very thick] (0,0,0) --++(0,0,-1) --++(0,1,0) --++(0,0,-1) --++
	(-4,0,0)
	-- ++(0,0,3)
	-- ++ (0,-1,0) -- ++(-1,0,0)
	-- ++ (0,0,0)
	-- ++ (0,-1,0) -- ++(-1,0,0)
	-- ++ (0,0,3)
	-- ++ (0,-1,0) -- ++(-1,0,0) 
	--++ (0,0,0) -- ++ (0,-3,0) --++ 
	(7,0,0)
	--++(0,1,0)
	--++(0,0,-1) --++(0,1,0)
	--++(0,0,-1) --++(0,1,0)
	--++(0,0,-1) --++(0,1,0)
	--++(0,0,-1) -- cycle;
	
	\draw[clip] (0,0,0) --++(0,0,-1) --++(0,1,0) --++(0,0,-1) --++
	(-4,0,0)
	-- ++(0,0,3)
	-- ++ (0,-1,0) -- ++(-1,0,0)
	-- ++ (0,0,0)
	-- ++ (0,-1,0) -- ++(-1,0,0)
	-- ++ (0,0,3)
	-- ++ (0,-1,0) -- ++(-1,0,0) 
	--++ (0,0,0) -- ++ (0,-3,0) --++ 
	(7,0,0)
	--++(0,1,0)
	--++(0,0,-1) --++(0,1,0)
	--++(0,0,-1) --++(0,1,0)
	--++(0,0,-1) --++(0,1,0)
	--++(0,0,-1) -- cycle;

	\foreach \i in {0,...,\longdiag}{
        \draw[-] (\i-1,0,\c+\t) -- ++(0,\longdiag,0);
        \draw[-] (\a+\t,\i,0) -- ++(0,0,\longdiag);
        \draw[-] (0,\b+\t,\i) -- ++(\longdiag,0,0);
    }

    \end{tikzpicture}
    
    $R_{\bet,\del}$
    \end{multicols}
    
    \begin{multicols}{2}
    \begin{tikzpicture}[x={(0:.45cm)},y={(120:.45cm)},z={(240:.45cm)}]
	\pgfmathtruncatemacro{\a}{4}
	\pgfmathtruncatemacro{\b}{3}
	\pgfmathtruncatemacro{\c}{3}
	\pgfmathtruncatemacro{\t}{6}
	
	\pgfmathtruncatemacro{\longdiag}{\a+\b+\c+\t+\t}
	
	\node (A) at (-4.33,.33,-1){\alf};
	\node (B) at (-4.33,.33,1){\bet};
	\node (G) at (-4.33,.33,4){\gam};
	\node (D) at (-4.33,.33,6){\del};
	
	\fill[black!10] (-5,0,-2) --++(0,-1,0)--++(4,0,0)--++(0,1,0)--++(0,0,-1)--++(-4,0,0)-- cycle;
	\fill[black!10](-5,0,6)--++(0,-3,0)--++(1,0,0)--++(0,3,0)--cycle;
	
	\draw[very thick] (0,0,0) --++(0,0,-1) --++(0,1,0) --++(0,0,-1) --++
	(-4,0,0)
	-- ++(0,0,1)
	-- ++ (0,-1,0) -- ++(-1,0,0)
	-- ++ (0,0,2)
	-- ++ (0,-1,0) -- ++(-1,0,0)
	-- ++ (0,0,3)
	-- ++ (0,-1,0) -- ++(-1,0,0) 
	--++ (0,0,0) -- ++ (0,-3,0) --++ 
	(7,0,0)
	--++(0,1,0)
	--++(0,0,-1) --++(0,1,0)
	--++(0,0,-1) --++(0,1,0)
	--++(0,0,-1) --++(0,1,0)
	--++(0,0,-1) -- cycle;
	
	\draw[clip] (0,0,0) --++(0,0,-1) --++(0,1,0) --++(0,0,-1) --++
	(-4,0,0)
	-- ++(0,0,1)
	-- ++ (0,-1,0) -- ++(-1,0,0)
	-- ++ (0,0,2)
	-- ++ (0,-1,0) -- ++(-1,0,0)
	-- ++ (0,0,3)
	-- ++ (0,-1,0) -- ++(-1,0,0) 
	--++ (0,0,0) -- ++ (0,-3,0) --++ 
	(7,0,0)
	--++(0,1,0)
	--++(0,0,-1) --++(0,1,0)
	--++(0,0,-1) --++(0,1,0)
	--++(0,0,-1) --++(0,1,0)
	--++(0,0,-1) -- cycle;

	\foreach \i in {0,...,\longdiag}{
        \draw[-] (\i-1,0,\c+\t) -- ++(0,\longdiag,0);
        \draw[-] (\a+\t,\i,0) -- ++(0,0,\longdiag);
        \draw[-] (0,\b+\t,\i) -- ++(\longdiag,0,0);
    }

    \end{tikzpicture}

    $R_{\alf,\del}$
    
    \columnbreak

    \begin{tikzpicture}[x={(0:.45cm)},y={(120:.45cm)},z={(240:.45cm)}]
	\pgfmathtruncatemacro{\a}{4}
	\pgfmathtruncatemacro{\b}{3}
	\pgfmathtruncatemacro{\c}{3}
	\pgfmathtruncatemacro{\t}{6}
	
	\pgfmathtruncatemacro{\longdiag}{\a+\b+\c+\t+\t}
	
	\node (A) at (-4.33,.33,-1){\alf};
	\node (B) at (-4.33,.33,1){\bet};
	\node (G) at (-4.33,.33,4){\gam};
	\node (D) at (-4.33,.33,6){\del};
	
	
	\draw[very thick] (0,0,0) --++(0,0,-1) --++(0,1,0) --++(0,0,-1) --++
	(-4,0,0)
	-- ++(0,0,3)
	-- ++ (0,-1,0) -- ++(-1,0,0)
	-- ++ (0,0,0)
	-- ++ (0,-1,0) -- ++(-1,0,0)
	-- ++ (0,0,1)
	-- ++ (0,-1,0) -- ++(-1,0,0) 
	--++ (0,0,2) -- ++ (0,-3,0) --++ 
	(7,0,0)
	--++(0,1,0)
	--++(0,0,-1) --++(0,1,0)
	--++(0,0,-1) --++(0,1,0)
	--++(0,0,-1) --++(0,1,0)
	--++(0,0,-1) -- cycle;
	
	\draw[clip] (0,0,0) --++(0,0,-1) --++(0,1,0) --++(0,0,-1) --++
	(-4,0,0)
	-- ++(0,0,3)
	-- ++ (0,-1,0) -- ++(-1,0,0)
	-- ++ (0,0,0)
	-- ++ (0,-1,0) -- ++(-1,0,0)
	-- ++ (0,0,1)
	-- ++ (0,-1,0) -- ++(-1,0,0) 
	--++ (0,0,2) -- ++ (0,-3,0) --++ 
	(7,0,0)
	--++(0,1,0)
	--++(0,0,-1) --++(0,1,0)
	--++(0,0,-1) --++(0,1,0)
	--++(0,0,-1) --++(0,1,0)
	--++(0,0,-1) -- cycle;

	\foreach \i in {0,...,\longdiag}{
        \draw[-] (\i-1,0,\c+\t) -- ++(0,\longdiag,0);
        \draw[-] (\a+\t,\i,0) -- ++(0,0,\longdiag);
        \draw[-] (0,\b+\t,\i) -- ++(\longdiag,0,0);
    }

    \end{tikzpicture}
    
    $R_{\bet,\gam}$
    \end{multicols}

    \caption{Each of the regions defined by removing an two of $\{\alpha, \beta, \gamma, \delta\}$ from $R$. Each region with $\alpha, \delta$ removed has forced lozenges along its top and southwest respectively.}
       \labelnote{fig:kuoregions2}
    \end{minipage}
\end{figure}
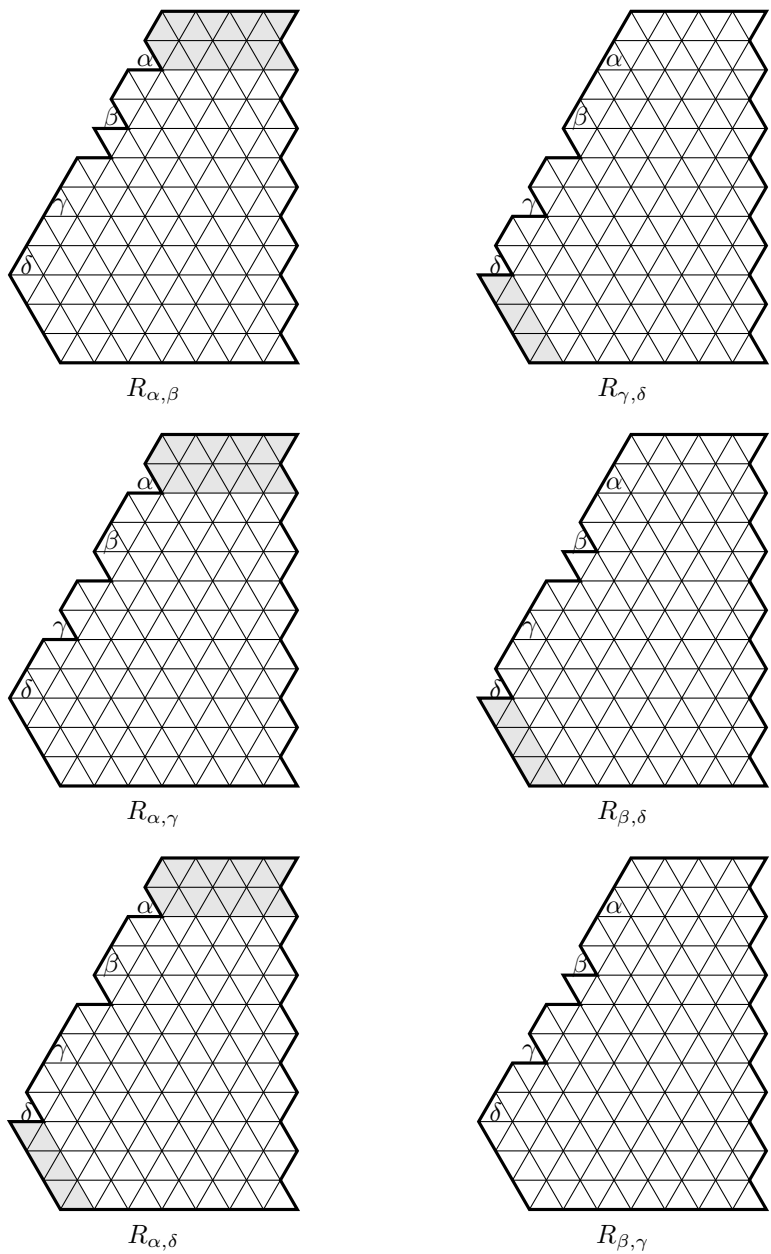
   
   Since each of these regions has forced lozenges that leave dented half-hexagons once removed (as depicted in Figure \ref{fig:kuoregions2}), we can rewrite equation \ref{eq:kuo2} in context.
   
   \begin{align}
    f_{b,n,(u_i)_1^n}(a) \cdot f_{b+1,n-2,(u_i-2)_2^{n-2}}(a+1) & =\\
    f_{b,n-1,(u_i-2)_2^n}(a+1)\cdot f_{b+1,n-1,(u_i)_1^{n-1}}(a) & -\\
    f_{b,n-1,(u_i-2)_1^{n-1}}(a+1) \cdot f_{b+1,n-1,(u_i)_2^n}(a)   & \labelnote{eq:line0}
   \end{align}
   
   It is worth noting that while the first term in line \ref{eq:line0} may refer to an untileable region, and thus may be identically zero, the rest of the terms are not identically zero. By the inductive hypothesis, we can rewrite this as follows.
   
    \begin{align*}
    f_{b,n,(u_i)_1^n}(a) \cdot F_{b+1,n-2,(u_i-2)_2^{n-2}}(a+1) & \aequiv\\
    F_{b,n-1,(u_i-2)_2^n}(a+1)\cdot F_{b+1,n-1,(u_i)_1^{n-1}}(a) & -\\
    F_{b,n-1,(u_i-2)_1^{n-1}}(a+1) \cdot F_{b+1,n-1,(u_i)_2^n}(a)   & 
   \end{align*}
   
   By relations $\ref{eq:equiv1}$ and $\ref{eq:equiv2}$, in order to show $f_{b,n,(u_i)_1^n}(a) \aequiv F_{b,n,(u_i)_1^n}(a)$ it suffices to show\footnote{As in the previous proof, in the case that the third term is identically zero it is unnecessary to show it is $\aequiv$-equivalent to the other terms, so we proceed assuming it is not identically zero. } that
   
    \begin{align}
    F_{b,n,(u_i)_1^n}(a) \cdot F_{b+1,n-2,(u_i-2)_2^{n-1}}(a+1) & \aequiv \\ 
    F_{b,n-1,(u_i-2)_2^n}(a+1)\cdot F_{b+1,n-1,(u_i)_1^{n-1}}(a) & \aequiv \\ 
    F_{b,n-1,(u_i-2)_1^{n-1}}(a+1) \cdot F_{b+1,n-1,(u_i)_2^n}(a)   &. 
   \end{align}
   
   We write these terms out explicitly.
   
    \begin{align}
    \dfrac{P_|(a,b+n)}{\prod_{i=1}^n(2a+u_i)_{\underline{u_i}}}\cdot \dfrac{P_|(a+1,b+n-1)}{\prod_{i=2}^{n-1} (2a+u_i)_{\underline{u_i}}}
    & \aequiv \\[10pt]
    \dfrac{P_|(a+1,b+n-1)}{\prod_{i=2}^n (2a+u_i)_{\underline{u_i}}}
    \cdot 
    \dfrac{P_|(a,b+n)}{\prod_{i=1}^{n-1} (2a+u_i)_{\underline{u_i}}}& \aequiv \\[10pt]
    \dfrac{P_|(a+1,b+n-1)}{\prod_{i=1}^{n-1}(2a+u_i)_{\underline{u_i}+1}}
    \cdot 
    \dfrac{P_|(a,b+n)}{\prod_{i=2}^n(2a+u_i)_{\underline{u_i}-1}} & 
   \end{align}
   
   Since the numerators of each expression are identical, it suffices to check the denominators are $\aequiv$-equivalent. We write down this condition.
   
    \begin{align}
    {\prod_{i=1}^n(2a+u_i)_{\underline{u_i}}}\cdot {\prod_{i=2}^{n-1} (2a+u_i)_{\underline{u_i}}}
    & \aequiv \labelnote{eq:line1}\\[10pt]
    {\prod_{i=2}^n (2a+u_i)_{\underline{u_i}}}
    \cdot 
    {\prod_{i=1}^{n-1} (2a+u_i)_{\underline{u_i}}}& \aequiv \labelnote{eq:line2}\\[10pt]
    {\prod_{i=1}^{n-1}(2a+u_i)_{\underline{u_i}+1}}
    \cdot 
    {\prod_{i=2}^n(2a+u_i)_{\underline{u_i}-1}} & \labelnote{eq:line3}
   \end{align}
   The products in lines \ref{eq:line1} and \ref{eq:line2} are equal. The product in line \ref{eq:line3} can be rewritten
   \begin{align}
    \prod_{i=1}^{n-1}\left((2a+b+n+i)(2a+u_i)_{\underline{u_i}}\right)
    \cdot 
    \prod_{i=2}^n\dfrac{(2a+u_i)_{\underline{u_i}-1}}{(2a+b+n+i-1)} &=\\[10pt]
    {\prod_{i=1}^{n-1}(2a+u_i)_{\underline{u_i}+1}}
    \cdot 
    {\prod_{i=2}^n(2a+u_i)_{\underline{u_i}-1}} & \labelnote{eq:line4}
   \end{align}
   to show it is also equal to the product in lines \ref{eq:line1}, \ref{eq:line2}. This completes our proof that $F_{b,n,(u_i)_1^n}(a) \aequiv f_{b,n,(u_i)_1^n}(a)$ for any tileable dented half-hexagon $V_{a,b,n,(u_i)_1^n}.$
   
   To complete our proof of Theorem \ref{main}, consider any dented half-hexagon $V_{a,b,n,(u_i)_1^n}$. If this region is untileable then equation \ref{eq:dentsfunction} holds trivially. If the region is tileable, then there exists some $c$ independent of $a$ so that
   $M(V_{a,b,n,(u_i)_1^n}) = c \cdot F_{b,n,(u_i)_1^n}(a)$. Then
   \begin{align*}
   \dfrac{f_{b,n,(u_i)_1^n}(a)}{f_{b,n,(u_i)_1^n}(0)} &= \dfrac{c \cdot F_{b,n,(u_i)_1^n}(a)}{c \cdot F_{b,n,(u_i)_1^n}(0)}
   = \dfrac{ F_{b,n,(u_i)_1^n}(a)}{ F_{b,n,(u_i)_1^n}(0)}
   = \prod_{i=1}^n (u_i)_{\underline{u_i}} \cdot \dfrac{P_|(a,b+n)}{\prod_{i=1}^n (2a+u_i)_{\underline{u_i}}}
   \end{align*}
   and equation \ref{eq:dentsfunction} follows by multiplying across by ${f_{b,n,(u_i)_1^n}(0)}$.

\end{proof}

\section{Proofs of Theorems \ref{relation} and \ref{relationgeneral}}
\labelnote{relationproof}

Recall, we denote the tiling functions $$f^+_{b,n,(u_i)_1^n}(a) := \M\left(V^+_{a,b,n,(u_i)_1^n}\right), \quad \overline f^+_{b,n,(u_i)_1^n}(a) := \M\left(\overline V^+_{a,b,n,(u_i)_1^n}\right).$$
In this section we will prove Corollary \ref{mainplus}, Theorem \ref{relation}, and Theorem \ref{relationgeneral} in sequence, as each proof depends on the previous result.

\begin{proof}[Proof of Corollary \ref{mainplus}]
	First we will use Theorem \ref{ciucu} to show
$$\overline f^+_{b,n,(u_i)_1^n}(a) \aequiv  \dfrac{P_{-}(a,b+n)}{\prod_{i=1}^n (2a+u_i)_{\underline{u_i}}}.$$
    Applying Theorem \ref{ciucu} to $H_{2a,b,b,2n,(u_i)_1^n,(u_i)_1^n}$, $R^-$ is $V_{a,b,n,(u_i)_1^n}$ whereas $R^+$ is congruent to $\overline V^+_{a,b,n,(u_i)_1^n}$. See Figure \ref{fig:partition2}, Left. Then 
    \begin{equation}\labelnote{factors}
        \M(H_{a,b,b,2n,(u_i)_1^n,(u_i)_1^n}) = 2^{b+n} \cdot f_{b,n,(u_i)_1^n}(a) \cdot \overline f^+_{b,n,(u_i)_1^n}(a).
    \end{equation}

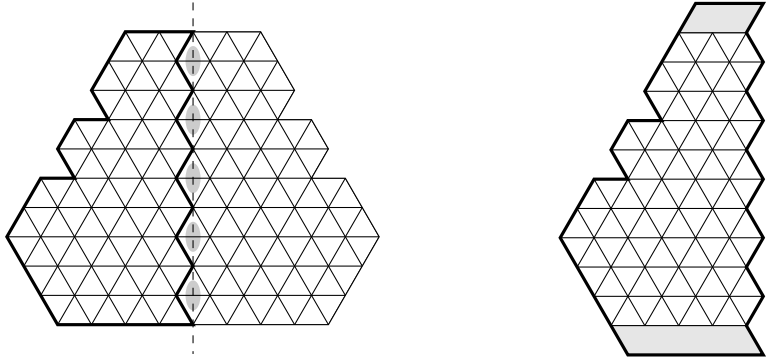
\begin{figure}
    \begin{minipage}[c]{\textwidth}
    \centering
    \begin{multicols}{2}

    \begin{tikzpicture}[x={(0:.45cm)},y={(120:.45cm)},z={(240:.45cm)}]
	\pgfmathtruncatemacro{\a}{4}
	\pgfmathtruncatemacro{\b}{3}
	\pgfmathtruncatemacro{\c}{3}
	\pgfmathtruncatemacro{\t}{4}

	\pgfmathtruncatemacro{\longdiag}{\a+\b+\c+\t+\t}
	
	\draw[very thick] (0,0,0) --++(0,0,-1) --++(0,1,0) --++(0,0,-1) --++(-2,0,0) -- ++(0,0,2)
	-- ++ (0,-1,0) -- ++(-1,0,0) -- ++ (0,0,1)
	-- ++ (0,-1,0) -- ++(-1,0,0) -- ++ (0,0,2) -- ++ (0,-3,0) --++ (4,0,0)
	--++(0,1,0)
	--++(0,0,-1) --++(0,1,0)
	--++(0,0,-1) --++(0,1,0)
	--++(0,0,-1) -- cycle;
	
	\draw[dashed] (.5,2,-2) -- (.5,-4,4);
	
	\draw[clip] (\a+\t,0,\t) -- ++(0,2,0)
	-- ++ (-1,0,0) -- ++(0,0,-1)--++(0,1,0)
	-- ++ (-1,0,0) -- ++(0,0,-1)
	-- ++ (0,2,0)
	-- ++(-\a,0,0) -- ++(0,0,2)
	-- ++ (0,-1,0) -- ++(-1,0,0) -- ++ (0,0,1)
	-- ++ (0,-1,0) -- ++(-1,0,0) -- ++ (0,0,2) --
	++(0,-\b,0) -- ++(\a+\t,0,0) -- cycle;

	\foreach \i in {0,...,\longdiag}{
        \draw[-] (\i,0,\c+\t) -- ++(0,\longdiag,0);
        \draw[-] (\a+\t,\i,0) -- ++(0,0,\longdiag);
        \draw[-] (0,\b+\t,\i) -- ++(\longdiag,0,0);
    }

		\foreach \i in {1,3,5,7,-1}{
	\fill[black, fill opacity=.2] (.5*\i,0,\i-1) ellipse (.1cm and .2cm);
	}
    \end{tikzpicture}

    \columnbreak
        
    \begin{tikzpicture}[x={(0:.45cm)},y={(120:.45cm)},z={(240:.45cm)},xscale=-1]
	\pgfmathtruncatemacro{\a}{4}
	\pgfmathtruncatemacro{\b}{3}
	\pgfmathtruncatemacro{\c}{3}
	\pgfmathtruncatemacro{\t}{4}
	
	\pgfmathtruncatemacro{\longdiag}{\a+\b+\c+\t+\t}
	
	\fill[black!10] (0,2,-2)--++(2,0,0)--++(0,-1,0)--++(-2,0,0)--cycle;
	\fill[black!10](0,-4,4)--++(4,0,0)--++(0,0,-1)--++(-4,0,0)--cycle;
	
	\draw[very thick] (0,0,0) --++(0,0,-1) --++(0,1,0) --++(0,0,-1) --++ (0,1,0)--++(2,0,0) -- ++(0,-3,0)
	-- ++ (0,0,1) -- ++(1,0,0) -- ++ (0,-1,0)
	-- ++ (0,0,1) -- ++(1,0,0) -- ++ (0,-2,0) -- ++ (0,0,4) --++ (-4,0,0) --++(0,0,-1)
	--++(0,1,0)
	--++(0,0,-1) --++(0,1,0)
	--++(0,0,-1) --++(0,1,0)
	--++(0,0,-1) -- cycle;
	
	
\begin{scope}[xscale=-1]
	\draw[clip] (-1,1,-2) --++(-2,0,0) -- ++(0,0,2)
	-- ++ (0,-1,0) -- ++(-1,0,0) -- ++ (0,0,1)
	-- ++ (0,-1,0) -- ++(-1,0,0) -- ++ (0,0,2) -- ++ (0,-3,0) --++ (4,0,0)
	--++(0,0,-1) --++(0,1,0)
	--++(0,0,-1) --++(0,1,0)
	--++(0,0,-1) --++(0,1,0)--++(0,0,-1) --++(0,1,0)--++(0,0,-1) --++(0,1,0)-- cycle;

	\foreach \i in {0,...,\longdiag}{
        \draw[-] (\i,0,\c+\t) -- ++(0,\longdiag,0);
        \draw[-] (\a+\t,\i,0) -- ++(0,0,\longdiag+1);
        \draw[-] (0,\b+\t,\i) -- ++(\longdiag,0,0);
    }
\end{scope}
	
    \end{tikzpicture}
    
    \end{multicols}
    \vspace{-.5cm}
    \caption{Left: In the language of Theorem \ref{ciucu} applied to $H_{2a,b,b,2n,(u_i)_1^n,(u_i)_1^n}$, the region $R^-$ is $V_{a,b,n,(u_i)_1^n}$ and $R^+$ is $\overline V^+_{a,b,n,(u_i)_1^n}$. Right: The region $V^+_{a,b,n,(u_i)_1^n}$ is congruent to the region which remains after removing forced lozenges from $ V_{a,b+1,n,(u_i+1)_1^n}$.}
    \labelnote{fig:partition2}
    \end{minipage}
\end{figure}
  
It follows from Theorems \ref{paper1} and \ref{main} that
    \begin{equation}\labelnote{eq:weightedmain}
         \dfrac{P(2a,b+n,b+n)}{\left(\prod_{i=1}^n (2a+u_i)_{\underline{u_i}}\right)^2} \aequiv \dfrac{P_|(a,b+n)}{\prod_{i=1}^n (2a+u_i)_{\underline{u_i}}} \cdot \overline f^+_{b,n,(u_i)_1^n}(a).
    \end{equation}
    Then
    \begin{equation}
    \overline
        f^+_{b,n,(u_i)_1^n}(a) \aequiv \dfrac{P(2a,b+n,b+n)/P_|(a,b+n)}{\prod_{i=1}^n (2a+u_i)_{\underline{u_i}}} = \dfrac{P_-(a,b+n)}{\prod_{i=1}^n (2a+u_i)_{\underline{u_i}}}.
    \end{equation}
        Then
        $$\overline f^+_{b,n,(u_i)_1^n}(a) \aequiv \dfrac{P_-(a,b+n)}{\prod_{i=1}^n (2a+u_i)_{\underline{u_i}}}.$$
	The rest of the proof follows as in the proof of Theorem \ref{main}, by calculating
	$$ \dfrac{\overline f^+_{b,n,(u_i)_1^n}(a)}{\overline f^+_{b,n,(u_i)_1^n}(0)}.$$
\end{proof}

We will now prove a technical lemma to assist in the proof of Theorem \ref{relation}.

\begin{lemma} \label{weakrelation}
	For $V_{a,b,n,(u_i)_1^n}$ a tileable dented half-hexagon,
    $$\overline{f}^+_{b,n,(u_i)_1^n}(a) \aequiv f^+_{b,n,(u_i)_1^n}(a-1/2).$$
\end{lemma}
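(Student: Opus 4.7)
The plan is to use Corollary~\ref{mainplus} together with a forced-lozenge reduction applied to Theorem~\ref{main}, reducing the lemma to a single polynomial identity between $P_-$ and $P_|$.

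First I would invoke the forced-lozenge observation recorded in the right panel of Figure~\ref{fig:partition2}: $V^+_{a,b,n,(u_i)_1^n}$ is congruent to the region obtained from $V_{a,b+1,n,(u_i+1)_1^n}$ by removing forced lozenges (each of weight~$1$), so
$$f^+_{b,n,(u_i)_1^n}(a) = f_{b+1,n,(u_i+1)_1^n}(a).$$
Since the statistic $\underline{(u_i+1)}$ computed in the shifted family equals $\underline{u_i}$ from the original, Theorem~\ref{main} gives
$$f^+_{b,n,(u_i)_1^n}(a) \aequiv \dfrac{P_|(a,b+n+1)}{\prod_{i=1}^n(2a+u_i+1)_{\underline{u_i}}}.$$
Both sides are polynomials in $a$, so substituting $a \mapsto a - 1/2$ yields
$$f^+_{b,n,(u_i)_1^n}(a-1/2) \aequiv \dfrac{P_|(a-1/2,b+n+1)}{\prod_{i=1}^n(2a+u_i)_{\underline{u_i}}}.$$

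Next I would apply Corollary~\ref{mainplus}, which provides
$$\overline f^+_{b,n,(u_i)_1^n}(a) \aequiv \dfrac{P_-(a,b+n)}{\prod_{i=1}^n(2a+u_i)_{\underline{u_i}}}.$$
The denominators in these two displays coincide exactly, so the lemma reduces to the single polynomial identity
$$P_-(a,B) \aequiv P_|(a-1/2,B+1), \qquad B := b+n,$$
which is precisely the $n=0$ case of the lemma.

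The main obstacle is verifying this remaining identity. Both sides are polynomials in $a$ of equal degree $\binom{B+1}{2}$, as read off from the product formulas in the introduction. Direct expansion of the Andrews--Gordon--Macdonald--Proctor formula for $P_-(a,B)$ produces, up to an $a$-independent constant, the product of linear factors $\prod_{k=1}^B(2a+k) \cdot \prod_{1 \le i < j \le B}(2a+i+j)$; on the other side, substituting $a \mapsto a-1/2$ into Proctor's formula for $P_|(a,B+1)$ and cancelling the Pochhammer factor $(2a)_B$ against the diagonal $i=j$ contributions to $\prod_{1 \le i \le j \le B}(2a+i+j-2)$ produces the same linear factors in $2a$ with matching multiplicities. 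A cleaner alternative is to invoke Ciucu's identity $P_-(a,B) \cdot P_|(a,B) = P(a,B,B)$, which reduces the problem to showing $P(a,B,B) \aequiv P_|(a,B) \cdot P_|(a-1/2,B+1)$, and verify that directly from MacMahon's product formula.
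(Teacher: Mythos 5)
Your proposal follows essentially the same route as the paper's proof: the forced-lozenge identification $f^+_{b,n,(u_i)_1^n}(a) = f_{b+1,n,(u_i+1)_1^n}(a)$, the application of Theorem \ref{main} and of the relation $\overline f^+_{b,n,(u_i)_1^n}(a) \aequiv P_-(a,b+n)/\prod_{i=1}^n(2a+u_i)_{\underline{u_i}}$ from Corollary \ref{mainplus}, and the reduction to the single identity $P_-(a,B) \aequiv P_|(a-1/2,B+1)$, which the paper likewise verifies by direct manipulation of the product formulas (using $\prod_{1\le i\le j\le B}\tfrac{x+i+j-1}{x+i+j}=\prod_{k=1}^B\tfrac{x+k}{x+2k}$; the bookkeeping is a little more involved than "cancel $(2a)_B$ against the diagonal terms," since $(2a)_B$ contains odd shifts while the diagonal contributes only even ones, but the factor-multiplicity count does close). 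One caution on your "cleaner alternative": the relation actually used in the proof of Corollary \ref{mainplus} is $P_-(a,B)\,P_|(a,B)=P(2a,B,B)$, so the correct alternative reduction is $P(2a,B,B) \aequiv P_|(a,B)\,P_|(a-1/2,B+1)$; with $P(a,B,B)$ as you quote it from the introduction the reduced identity is false already for $B=1$ (it would assert $a+1 \aequiv 2a+1$).
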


\begin{proof}
As Figure \ref{fig:partition2}, Right, demonstrates, $V^+_{a,b,n,(u_i)_1^n}$ is congruent to $V_{a,b+1,n,(u_i+1)_1^n}$ with forced lozenges removed, and therefore has the same tiling function as $V_{a,b+1,n,(u_i+1)_1^n}$.
	It follows from this fact and Theorem \ref{main} that
	$$f^+_{b,n,(u_i)_1^n}(a-1/2) = f_{b+1,n,(u_i+1)_1^n}(a-1/2) \aequiv \dfrac{P_|(a-1/2,b+n+1)}{\prod_{i=1}^n(2a+u_i)_{\underline{u_i}}}.$$
	Since
$$\overline f^+_{b,n,(u_i)_1^n}(a) \aequiv \dfrac{P_-(a,b+n)}{\prod_{i=1}^n (2a+u_i)_{\underline{u_i}}}$$	
	it suffices to show
	$P_|(a-1/2,b+1) \aequiv P_-(a,b) $. It is straightforward to check
	$$P_-(a,b) \aequiv (2a+1)_b \prod_{1 \leq i < j \leq b} (2a+i+j).$$
	We can rewrite
	\begin{align*}
	P_|(a-1/2,b+1) &\aequiv \dfrac{(a+1/2)_b}{(2a)_b} \prod_{1 \leq i \leq j \leq b} (2a+i+j-2)\\
				&= \dfrac{(a+1/2)_b}{(2a)_b} \prod_{1 \leq i \leq j \leq b} \dfrac{(2a+i+j-2)}{(2a+i+j)}(2a+i+j).
	\end{align*}
	It is straightforward to check that $$\prod_{1 \leq i \leq j \leq b} \dfrac{x+i+j-1}{x+i+j} = \prod_{k=1}^b \dfrac{x+k}{x+2k}$$
	and therefore
	$$\prod_{1 \leq i \leq j \leq b} \dfrac{2a+i+j-2}{2a+i+j} = \prod_{1 \leq i \leq j \leq b} \dfrac{(2a+i+j-2)(2a+i+j-1)}{(2a+i+j-1)(2a+i+j)} = \dfrac{(2a)_b(2a+1)_b}{(2a+1)_{2b}}.$$ 
	Then
	\begin{align*}
	P_|(a-1/2,b+1) &\aequiv \dfrac{(a+1/2)_b}{(2a)_b} \cdot \dfrac{(2a)_b(2a+1)_b}{(2a+1)_{2b}} \prod_{1 \leq i \leq j \leq b}(2a+i+j)\\
				&=  \dfrac{(a+1/2)_b}{(2a+b+1)_b} \prod_{1 \leq i < j \leq b}(2a+i+j) \prod_{1 \leq k \leq b} (2a+2k)\\
				&= \dfrac{1}{(2a+b+1)_b} \prod_{1 \leq i < j \leq b}(2a+i+j) \prod_{1 \leq k \leq b} (2a+2k)(a-1/2+k)\\
				&\aequiv \dfrac{1}{(2a+b+1)_b} \prod_{1 \leq i < j \leq b}(2a+i+j) \prod_{1 \leq k \leq b} (2a+2k)(2a+k)\\
				&= \dfrac{(2a+1)_{2b}}{(2a+b+1)_b} \prod_{1 \leq i < j \leq b}(2a+i+j)\\
				&= (2a+1)_b  \prod_{1 \leq i < j \leq b}(2a+i+j)
	\end{align*}
	which is $\aequiv$-equivalent to $P_{-}(a,b)$. 

\end{proof}

We can now prove Theorem \ref{relation} directly.

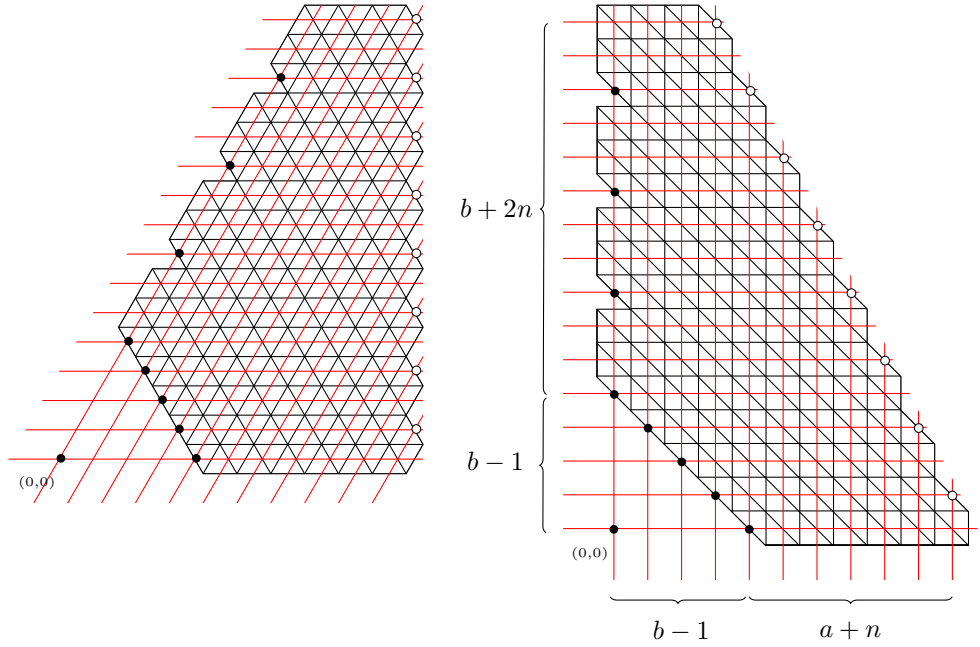
\begin{figure}
    \begin{minipage}[c]{\textwidth}
    \centering

	\begin{multicols}{2}

    \begin{tikzpicture}[x={(0:1cm)},y={(120:1cm)},z={(240:1cm)},scale=.45]
	\pgfmathtruncatemacro{\a}{6}
	\pgfmathtruncatemacro{\b}{5}
	\pgfmathtruncatemacro{\c}{5}
	\pgfmathtruncatemacro{\t}{6}
    
	\pgfmathtruncatemacro{\longdiag}{\a+\b+\c+\t}
	

	\begin{scope}

	\draw[clip] (0,0,0) --++(-3,0,0)
	 -- ++(0,0,2)-- ++ (0,-1,0) -- ++(-1,0,0)
	 -- ++(0,0,2)-- ++ (0,-1,0) -- ++(-1,0,0)
	 -- ++(0,0,2)-- ++ (0,-1,0) -- ++(-1,0,0)
	 -- ++(0,0,2)-- ++ (0,-5,0) --++ (6,0,0)
	 --++(0,0,-1)
	--++ (0,1,0) --++(0,0,-1)
	--++ (0,1,0) --++(0,0,-1)
	--++ (0,1,0) --++(0,0,-1)
	--++ (0,1,0) --++(0,0,-1)
	--++ (0,1,0) --++(0,0,-1)
	--++ (0,1,0) --++(0,0,-1)
	--++ (0,1,0) --++(0,0,-1) 
	--++ (0,1,0)-- cycle;
	
	\foreach \i in {0,...,\longdiag}{
        \draw[-] (0,\a,\i) -- ++(0,-\longdiag,0);
        \draw[-] (0,\a-\i,0) -- ++(0,0,\longdiag);
        \draw[-] (0,-\i,0) -- ++(-\longdiag,0,0);
   	 }
	\end{scope}

	\begin{scope}
		\clip (.5,0,0) --++ (-4.5,0,0) --++(0,0,17) --++(12.5,0,0)  --++(0,0,-1) -- cycle;
		\foreach \i in {1,...,16}{
			\draw[red] (1,.5-.5*\i,.5*\i) --++(-14,0,0);
		}
		\foreach \i in {1,...,11}{
			\draw[red] (-3.5 + \i, 0,-.5) --++(0,0,19);
		}
	\end{scope}	

	\begin{scope}[xshift=-10.2cm,yshift=-13.4cm, node distance=.45cm,]

	\node [shape=circle,fill=black,scale=.35] (O) at (0,0){};
	\node [below left of=O] {\tiny (0,0)};

	\node [shape=circle,fill=black,scale=.35] (B) at (4,0){};

	\node [shape=circle,fill=black,scale=.35] (C) at (4,4){};

	\node [shape=circle,fill=black,scale=.35] (C1) at (4,3){};
	\node [shape=circle,fill=black,scale=.35] (C2) at (4,2){};
	\node [shape=circle,fill=black,scale=.35] (C3) at (4,1){};

	\node [shape=circle,fill=black,scale=.35] (D3) at (7,7){};
	\node [shape=circle,fill=black,scale=.35] (D2) at (10,10){};
	\node [shape=circle,fill=black,scale=.35] (D1) at (13,13){};

	\foreach \i in {1,...,8}{
		\node [shape = circle, draw=black, fill=white, scale=.35] (S\i) at (9+\i, 2*\i-2,-1){};
	}




	\end{scope}

    \end{tikzpicture}


    \begin{tikzpicture}[x={(0:1cm)},y={(135:1.41cm)},z={(270:1cm)},scale=.45]
	\pgfmathtruncatemacro{\a}{6}
	\pgfmathtruncatemacro{\b}{5}
	\pgfmathtruncatemacro{\c}{5}
	\pgfmathtruncatemacro{\t}{6}
    
	\pgfmathtruncatemacro{\longdiag}{\a+\b+\c+\t}
	

	\begin{scope}

	\draw[clip] (0,0,0) --++(-3,0,0)
	 -- ++(0,0,2)-- ++ (0,-1,0) -- ++(-1,0,0)
	 -- ++(0,0,2)-- ++ (0,-1,0) -- ++(-1,0,0)
	 -- ++(0,0,2)-- ++ (0,-1,0) -- ++(-1,0,0)
	 -- ++(0,0,2)-- ++ (0,-5,0) --++ (6,0,0)
	 --++(0,0,-1)
	--++ (0,1,0) --++(0,0,-1)
	--++ (0,1,0) --++(0,0,-1)
	--++ (0,1,0) --++(0,0,-1)
	--++ (0,1,0) --++(0,0,-1)
	--++ (0,1,0) --++(0,0,-1)
	--++ (0,1,0) --++(0,0,-1)
	--++ (0,1,0) --++(0,0,-1) 
	--++ (0,1,0)-- cycle;
	
	\foreach \i in {0,...,\longdiag}{
        \draw[-] (0,\a,\i) -- ++(0,-\longdiag,0);
        \draw[-] (0,\a-\i,0) -- ++(0,0,\longdiag);
        \draw[-] (0,-\i,0) -- ++(-\longdiag,0,0);
   	 }
	\end{scope}

	\begin{scope}
		\clip (.5,0,0) --++ (-4.5,0,0) --++(0,0,17) --++(12.5,0,0)  --++(0,0,-1) -- cycle;
		\foreach \i in {1,...,16}{
			\draw[red] (1,.5-.5*\i,.5*\i) --++(-14,0,0);
		}
		\foreach \i in {1,...,11}{
			\draw[red] (-3.5 + \i, 0,-.5) --++(0,0,19);
		}
	\end{scope}	

	\begin{scope}[xshift=-2.5cm,yshift=-15.5cm, node distance=.45cm,]

	\node [shape=circle,fill=black,scale=.35] (O) at (0,0){};
	\node [below left of=O] {\tiny (0,0)};

	\node [shape=circle,fill=black,scale=.35] (B) at (4,0){};

	\node [shape=circle,fill=black,scale=.35] (C) at (4,4){};

	\node [shape=circle,fill=black,scale=.35] (C1) at (4,3){};
	\node [shape=circle,fill=black,scale=.35] (C2) at (4,2){};
	\node [shape=circle,fill=black,scale=.35] (C3) at (4,1){};

	\node [shape=circle,fill=black,scale=.35] (D3) at (7,7){};
	\node [shape=circle,fill=black,scale=.35] (D2) at (10,10){};
	\node [shape=circle,fill=black,scale=.35] (D1) at (13,13){};

	\foreach \i in {1,...,8}{
		\node [shape = circle, draw=black, fill=white, scale=.35] (S\i) at (9+\i, 2*\i-2,-1){};
	}

	\draw [decorate, decoration=brace] (-2.1,-.1) --++ (4,4);
	\node (L1) at (-1.5,2) {$b-1$};

	\draw [decorate, decoration=brace] (2,4) --++ (11,11);
	\node (L2) at (6,9.5) {$b+2n$};

	\draw [decorate, decoration=brace] (1.9,-2) --++ (-4,0);
	\node (L3) at (-1,-3) {$b-1$};

	\draw [decorate, decoration=brace] (8,-2) --++(-6,0);
	\node (L3) at (4,-3) {$a+n$};
	\end{scope}

	\end{tikzpicture}

	\end{multicols}

    \caption{Left: The region $V_{a,b,n,\vec u} = V_{3,5,3,(3,6,9)}$ with the east-northeast square lattice superimposed. Right: As with Figure \ref{fig:DentedHexCoordinates1} we take a linear transformation of the figure to give Cartesian coordinates for the lattice's vertices.
 Sources lie along the southwest side of the region at coordinates $\{(i-1,b-i): i \in [b]\}$ and along dents at coordinates $\{(0,2b+2n-u_i: i \in [n]\}$. Sinks lie at coordinates $\{(a+b+n-j, 2j-1): j \in [b+n]\}$.}
    \label{fig:DentedHexCoordinates}
    \end{minipage}
\end{figure}

\begin{proof}[Proof of Theorem \ref{relation}]
Recall we wish to show that
\begin{equation}\labelnote{eq:relation}
   \overline f^+_{b,n,(u_i)_1^n}(a)= f^+_{b,n,(u_i)_1^n}(a- 1/2).
\end{equation}

Lemma \ref{weakrelation} shows that $\overline f^+_{b,n,(u_i)_1^n}(a)\aequiv f^+_{b,n,(u_i)_1^n}(a- 1/2).$
We will show that the leading coefficients of $\overline f^+_{b,n,(u_i)_1^n}$ and $f^+_{b,n,(u_i)_1^n}$ are the same, and thus the polynomials are equal.

Since individual regions' tiling numbers are given by determinants of corresponding path matrices, the tiling functions $f^+_{b,n,(u_i)_1^n}(a)$  , $ 
\overline f^+_{b,n,(u_i)_1^n}(a)$ are given by determinants of appropriate path matrices whose entries are functions of $a$. Let $M$ denote the path matrix for $V^+_{a,b,n,(u_i)_1^n}$, whose $(i,j)$th entry counts paths from the $i$th source to the $j$th sink. This entry is then of the form $$\binom{a+k_{1,i,j}+k_{2,i,j}}{k_{2,i,j}}.$$
The value of $k_{1,i,j}$ can be computed by subtracting the $x$-coordinate of the the $i$th source from the $x$-coordinate of the $j$th sink, using the coordinate scheme from Figure \ref{fig:DentedHexCoordinates}. Similarly, $k_{2,i,j}$ can be computed by subtracting the $y$-coordinate of the $i$th source from the $y$-coordinate of the $j$th sink using that coordinate scheme. Thus, when there exists an east-northeast lattice path from the $i$th source to the $j$th sink, $a+k_{1,i,j}$ counts the east-directed steps on that path, and $k_{2,i,j}$ counts the northeast-directed steps.

Let $\overline M$ be the analogous path matrix for $\overline V^+_{a,b,n,(u_i)_1^n}$. Then the $(i,j)$th entry of this matrix is $$\binom{a+k_{1,i,j}+k_{2,i,j}-1}{k_{2,i,j}} + \frac 12 \binom{a+k_{1,i,j}+k_{2,i,j}-1}{k_{2,i,j}-1}.$$ This calculation separates the lattice paths according to their last step; for each lattice path in a nonintersecting family whose last step is northeast, the tiling induced by the family includes a lozenge of weight $1/2$, and the calculation above gives a weighted count of lattice paths accordingly. It is straightforward to check that when the $(i,j)$th entry from each matrix is expanded as a polynomial in $a$, the two polynomials have the same leading coefficient. It suffices to show the leading coefficients of the determinants of these matrices are also the same.

We will show for $\pi$ any permutation of $[b+n]$, that $\prod_{i=1}^{b+n}M[i, \pi(i)]$ and $\prod_{i=1}^{b+n}\overline M[i, \pi(i)]$ are either both 0, or both polynomials in $a$ of degree $$b+n + \binom{b+n}{2} - \sum_{i=1}^n \underline{u_i}.$$ 
The $(i,j)$th entry of $M$ is nonzero if $k_{2,i,j}$ is nonnegative. For any permutation $\pi$ such that each $k_{2,i,\pi(i)}$ is nonnegative, the degree of the polynomial $\prod_i M[i, \pi(i)]$ is the sum of entries $\sum_i k_{2,i,\pi(i)}$; this is the sum of the $y$-coordinates of the sinks, minus the sum of the $y$-coordinates of the sources. The sum of sink $y$-coordinates is
$$\sum_{j=1}^{b+n}(2j-1) =b+n+ 2 \sum_{j=0}^{b+n-1} j = b+n+2 \binom{b+n}{2}.$$
The sum of source $y$-coordinates is
\begin{align}
	\sum_{i=0}^{b-1}i + \sum_{i=1}^n \left(2b+2n-u_i\right)&= \sum_{i=0}^{b-1}i + \sum_{i=1}^n(b+n-i)+ \sum_{i=1}^n \left(b+n+i-u_i\right) \\
&= \binom{b}{2} + bn + n^2 - \binom {n+1}2 + \sum_{i=1}^n \underline {u_i}\\
&=\binom{b+n}{2}  + \sum_{i=1}^n \underline {u_i}.
\end{align}
The difference is then $$b+n+\binom{b+n}{2} - \sum_{i=1}^n \underline {u_i}.$$

Since this is also the degree of $f^+_{b,n,(u_i)_1^n}(a)$ and $\overline f^+_{b,n,(u_i)_1^n}(a)$, both $f^+_{b,n,(u_i)_1^n}(a)$ and $\overline f^+_{b,n,(u_i)_1^n}(a)$ must have the same leading coefficient. 

\end{proof}

We will consider how the path matrices of tubey regions relate to those of dented half-hexagons, to show that Theorem \ref{relationgeneral} follows from Theorem \ref{relation}. Toward this end we will develop some vocabulary for tubey regions. We say $R$ is the \textbf{core} of $R_{z}(a)$. We say the \textbf{height} of a tubey region is half the number of edges in $z$. Recall that the construction of a tubey region is not necessarily unique, and note that both the core and height of a region depend on its specific construction.

We include the following proof of a special case of Theorem \ref{relationgeneral}, where the number of southwest boundary edges of a simply connected tubey region is the same as its height, because we feel the proof is illustrative. This is the case where all lattice paths of lozenges in tilings of the region cross $z$ and end within the tube.

\begin{proof}[Proof of Theorem \ref{relationgeneral}, special case]
Let $R_{z}(0)$ be a simply connected tileable tubey region of height $h$, and let $g(a)$ be the tiling function for $\{R_{z}(a+1/2): a \in \mathbb N\}$ and $\overline g(a)$ be the tiling function of $\{\overline R_{z}(a+1/2): a \in \mathbb N\}$. Assume for the purposes of this special case that the number of southwest boundary edges of $R$ is $h$.

For  $a \geq h$, the southeast-directed lattice line which passes through the top of $z$ also passes through the southern boundary of the tube (rather than its eastern boundary), as depicted in Figure \ref{fig:decomposition}. We call the subregion of $R_z(a+1/2)$ west of that lattice line $W$ and the subregion east of the line $E(a)$, since $W$ is the same for each $a$ but $E(a)$ grows with $a$; we let $\overline E(a)$ denote the analogous weighted subregion of $\overline R_{z}(a+1/2)$.

\begin{figure}
    \begin{minipage}[c]{\textwidth}
    \centering

    \begin{tikzpicture}[x={(0:.45cm)},y={(120:.45cm)},z={(240:.45cm)}]
	\pgfmathtruncatemacro{\a}{10}
	\pgfmathtruncatemacro{\b}{5}
	\pgfmathtruncatemacro{\c}{4}

	\pgfmathtruncatemacro{\width}{\a+\c}

	\draw[very thick] (0,0,0)
	--++ (-\a,0,0)
	--++ (0,0,2)--++(-2,0,0)--++(0,0,1)
	--++(0,-2,0)--++(1,0,0)	--++(0,0,1)
	--++(0,-1,0)--++(-1,0,0)--++(0,0,1)--++(0,-2,0)
	--++(2+\a,0,0)
	--++(0,1,0)--++(0,0,-1)
	--++(0,1,0)--++(0,0,-1)
	--++(0,1,0)--++(0,0,-1)
	--++(0,1,0)--++(0,0,-1)
	--++(0,1,0)--++(0,0,-1);
	
	\fill[fill=gray!20] (-9,0,0)
	--++ (-1,0,0)
	--++ (0,0,2)--++(-2,0,0)--++(0,0,1)
	--++(0,-2,0)--++(1,0,0)	--++(0,0,1)
	--++(0,-1,0)--++(-1,0,0)--++(0,0,1)--++(0,-2,0)
	--++(2+1,0,0)
	--++(0,1,0)--++(0,0,-1)
	--++(0,1,0)--++(0,0,-1)
	--++(0,1,0)--++(0,0,-1)
	--++(0,1,0)--++(0,0,-1)
	--++(0,1,0)--++(0,0,-1);

	\draw[clip] (0,0,0)
	--++ (-\a,0,0)
	--++ (0,0,2)--++(-2,0,0)--++(0,0,1)
	--++(0,-2,0)--++(1,0,0)	--++(0,0,1)
	--++(0,-1,0)--++(-1,0,0)--++(0,0,1)--++(0,-2,0)
	--++(2+\a,0,0)
	--++(0,1,0)--++(0,0,-1)
	--++(0,1,0)--++(0,0,-1)
	--++(0,1,0)--++(0,0,-1)
	--++(0,1,0)--++(0,0,-1)
	--++(0,1,0)--++(0,0,-1);
	
	\foreach \X in {-\b,...,\width}{
	\draw[-] (-\X,0,0)--++(0,0,2*\b);
	\draw[-] (-\X-\b,0,0)--++(0,-2*\b,0);
	}
	\foreach \Y in {0,...,\b}{
	\draw[-] (0,-\Y-1,\Y)--++(-\width,0,0); \draw [-](0,-\Y,\Y)--++(-\width,0,0);
	}
	
	\draw[very thick, red] (-\a+1,0,0) --++(0,-2*\b,0);
	

    \end{tikzpicture}

    \caption{A tubey region whose core is shaded grey, separated into subregions $W$ (west of the line) and $E$ (east of the line) by the southeast-directed lattice line descending from the top of $z$.}
    \labelnote{fig:decomposition}
    \end{minipage}
\end{figure}
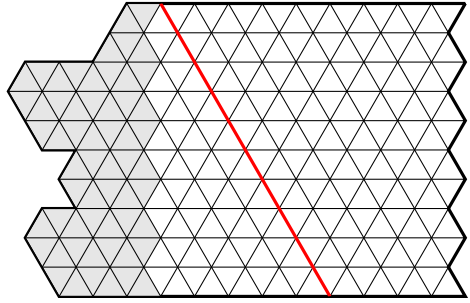

Let $N$ denote the path matrix with polynomial entries in $a$ for lattice paths starting from the southwest boundary edges of $R_{z}(a+1/2)$ and ending at the northeast boundary edges; we take the indexing of rows and columns to be increasing from sources and sinks at the bottom of the region to the top. We let $\overline N$ denote the analogous path matrix for $\overline R_{z}(a+1/2)$. We will show that these path matrices  have natural decompositions under the Cauchy-Binet formula into a linear combination of tiling functions of dented half-hexagons.

Let $N^W$ be the path matrix for east-northeast paths on $W$, and $N^{E}$ the path matrix with polynomial entries in $a$ for east-northeast paths on $E(a)$, using the same bottom-to-top indexing scheme as with $N$. Since $R_{z}(a+1/2)$ is tileable with height $h$, $N$ is an $h\times h$ matrix whereas $N^W$ is an $h\times 2h$ matrix and $N^E$ is a $2h\times h$ matrix. Furthermore,
\begin{equation}
    N = N^W \cdot N^E.
\end{equation}
By Theorem \ref{CauchyBinet} (Cauchy-Binet),
\begin{align}\labelnote{eq:CB}
    \det(N) &= \sum_{v \in \binom{[2h]}{h}} \det(N^W_{[h],v}) \cdot \det(N^E_{v,[h]})\\[10pt]
    \mbox{similarly,} \quad \det(\overline N) &= \sum_{v \in \binom{[2h]}{h}} \det(N^W_{[h],v}) \cdot \det(N^{\overline E}_{v,[h]}). \labelnote{eq:CB2}
\end{align}

In fact, $N^E_{v,[h]}$ is the path matrix for an upside-down copy of the dented half-hexagon $V^+_{a-h,0,h,\vec v}$ where $\vec v$ denotes the vector of elements of $[2h]-v$ in increasing order. This is demonstrated in Figure \ref{fig:flip}. Then

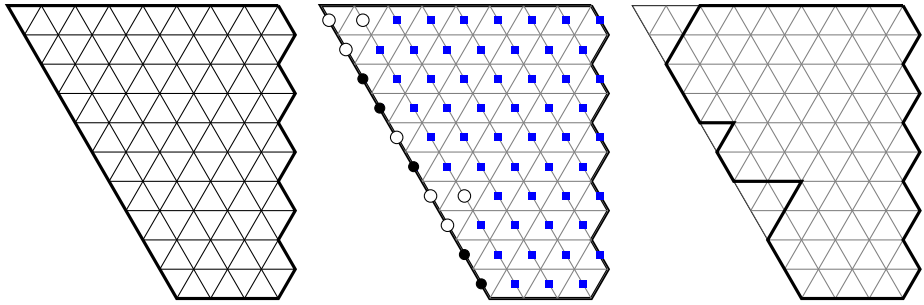
\begin{figure}
    \begin{minipage}[c]{\textwidth}
    \centering

    \begin{multicols}{3}
    
    
    \begin{tikzpicture}[x={(0:.45cm)},y={(120:.45cm)},z={(240:.45cm)}]
	\pgfmathtruncatemacro{\a}{9}
	\pgfmathtruncatemacro{\b}{5}
	\pgfmathtruncatemacro{\c}{4}

	\pgfmathtruncatemacro{\width}{\a+\c}

	\draw[very thick] (0,0,0)
	--++ (-\a+1,0,0)
	--++ (0,-2*\b,0)
	--++(\b-2,0,0) --++(0,0,-1)
	--++(0,1,0)--++(0,0,-1)
	--++(0,1,0)--++(0,0,-1)
	--++(0,1,0)--++(0,0,-1)
	--++(0,1,0)--++(0,0,-1)
	--++(0,1,0);

	\clip  (0,0,0)
	--++ (-\a+1,0,0)
	--++ (0,-2*\b,0)
	--++(\b-2,0,0) --++(0,0,-1)
	--++(0,1,0)--++(0,0,-1)
	--++(0,1,0)--++(0,0,-1)
	--++(0,1,0)--++(0,0,-1)
	--++(0,1,0)--++(0,0,-1)
	--++(0,1,0);
	
	\foreach \X in {-\b,...,\width}{
	\draw[-] (-\X,0,0)--++(0,0,2*\b);
	\draw[-] (-\X-\b,0,0)--++(0,-2*\b,0);
	}
	\foreach \Y in {0,...,\b}{
	\draw[-] (0,-\Y-1,\Y)--++(-\width,0,0);
	 \draw [-](0,-\Y,\Y)--++(-\width,0,0);
	}

    \end{tikzpicture}
    
    \columnbreak
    
    
    \begin{tikzpicture}[x={(0:.45cm)},y={(120:.45cm)},z={(240:.45cm)}]
	\pgfmathtruncatemacro{\a}{9}
	\pgfmathtruncatemacro{\b}{5}
	\pgfmathtruncatemacro{\c}{4}

	\pgfmathtruncatemacro{\width}{\a+\c}

	\draw[very thick] (0,0,0)
	--++ (-\a+1,0,0)
	--++ (0,-2*\b,0)
	--++(\b-2,0,0) --++(0,0,-1)
	--++(0,1,0)--++(0,0,-1)
	--++(0,1,0)--++(0,0,-1)
	--++(0,1,0)--++(0,0,-1)
	--++(0,1,0)--++(0,0,-1)
	--++(0,1,0);

	\begin{scope}
	\clip  (0,0,0)
	--++ (-\a+1,0,0)
	--++ (0,-2*\b,0)
	--++(\b-2,0,0) --++(0,0,-1)
	--++(0,1,0)--++(0,0,-1)
	--++(0,1,0)--++(0,0,-1)
	--++(0,1,0)--++(0,0,-1)
	--++(0,1,0)--++(0,0,-1)
	--++(0,1,0);
	
	\foreach \X in {-\b,...,\width}{
	\draw[gray] (-\X,0,0)--++(0,0,2*\b);
	\draw[gray] (-\X-\b,0,0)--++(0,-2*\b,0);
	}
	\foreach \Y in {0,...,\b}{
	\draw[gray] (0,-\Y-1,\Y)--++(-\width,0,0); \draw [gray](0,-\Y,\Y)--++(-\width,0,0);
	}
	
	\foreach \y in {0,...,9}{
	\foreach \x in {0,...,\a}{
	\node[shape=rectangle,fill=blue,scale=.4] (A\x) at (-\x-1,-\y-.5,0){};
	}
	}
	\end{scope}
	
	\foreach \y in {0,...,4}{
	\node[shape=rectangle,fill=blue,scale=.4] (A) at (-1,-\y-1.5,\y-1){};
	}
	
	\foreach \y in {3,2,5,8,9}{
	\node[shape=circle,draw=black,fill=black,scale=.4] (A) at (-8,-\y-.5,0){};
	}
	
	\node[shape=circle,draw=black,fill=white,scale=.5] (A) at (-8,0-.5,0){};
	\node[shape=circle,draw=black,fill=white,scale=.5] (A) at (-7,0-.5,0){};
	\node[shape=circle,draw=black,fill=white,scale=.5] (A) at (-8,0-1.5,0){};
	\node[shape=circle,draw=black,fill=white,scale=.5] (A) at (-8,0-4.5,0){};
	\node[shape=circle,draw=black,fill=white,scale=.5] (A) at (-8,0-6.5,0){};
	\node[shape=circle,draw=black,fill=white,scale=.5] (A) at (-8,0-7.5,0){};
	\node[shape=circle,draw=black,fill=white,scale=.5] (A) at (-7,0-6.5,0){};
	\end{tikzpicture}

    \columnbreak
    
    
    \begin{tikzpicture}[x={(0:.45cm)},y={(120:.45cm)},z={(240:.45cm)}]
	\pgfmathtruncatemacro{\a}{9}
	\pgfmathtruncatemacro{\b}{5}
	\pgfmathtruncatemacro{\c}{4}

	\pgfmathtruncatemacro{\width}{\a+\c}
	
	\begin{scope}
	
	\draw[clip]  (0,0,0)
	--++ (-\a+1,0,0)
	--++ (0,-2*\b,0)
	--++(\b-2,0,0) --++(0,0,-1)
	--++(0,1,0)--++(0,0,-1)
	--++(0,1,0)--++(0,0,-1)
	--++(0,1,0)--++(0,0,-1)
	--++(0,1,0)--++(0,0,-1)
	--++(0,1,0);
	
	\foreach \X in {-\b,...,\width}{
	\draw[gray] (-\X,0,0)--++(0,0,2*\b);
	\draw[gray] (-\X-\b,0,0)--++(0,-2*\b,0);
	}
	\foreach \Y in {0,...,\b}{
	\draw[gray] (0,-\Y-1,\Y)--++(-\width,0,0); \draw [gray](0,-\Y,\Y)--++(-\width,0,0);
	}
	
	\end{scope}
	
	\draw[very thick] (0,0,0)--++(-6,0,0)--++(0,0,2)--++(0,-2,0)--++(1,0,0)--++(0,0,1)--++(0,-1,0) --++(2,0,0)--++(0,0,2)--++(0,-2,0)--++(3,0,0)
	 --++(0,0,-1)
	--++(0,1,0)--++(0,0,-1)
	--++(0,1,0)--++(0,0,-1)
	--++(0,1,0)--++(0,0,-1)
	--++(0,1,0)--++(0,0,-1)
	--++(0,1,0);
	
	\end{tikzpicture}
    
    \end{multicols}
    \caption{Left: $E(a)$ corresponding to $R_{z}(a)$ from Figure \ref{fig:decomposition}; note it is defined entirely by $a$ and $h$. The entries of $N^E_{v,[h]}$ count east-northeast lattice paths starting from points indexed by $v$ on the west boundary and ending at the eastern boundary. Center: For $v=\{1,2,5,7,8\}$ the vertices on the square lattice are depicted and color coded: sources are drawn as black circles; vertices accessible from these by east and northeast steps are depicted as blue squares; vertices inaccessible from the sources are drawn as white circles. Right: The region whose tilings are enumerated by families of nonintersecting lattice paths on the black and blue vertices is an upside-down copy of $V^+_{a-h,0,h,\vec v}$ with some forced lozenges removed, where the entries of $\vec v$ are $[2h]-v$ in increasing order. The outline of this region is drawn thickly over the whole region; the large triangular omissions come from forced lozenges where dents are grouped together.}
    \labelnote{fig:flip}
    \end{minipage}
\end{figure}

\begin{align*}
g(a) &= \sum_{v \in \binom{[2h]}{h}} \det(N^W_{[h],v}) \cdot f^+_{0,h,\vec v}(a-h)\\
\mbox{similarly,} \quad \overline g(a) &= \sum_{v \in \binom{[2h]}{h}} \det(N^W_{[h],v}) \cdot \overline f^+_{0,h,\vec v}(a-h),\\
\end{align*}
so that this case of Theorem \ref{relationgeneral} follows from Theorem \ref{relation}.

\end{proof}

As the determinants of path matrices do not reliably count the tilings of regions which are not simply connected, the more general case does not follow directly from an argument about path matrices. However, we may reinterpret equations \ref{eq:CB} and \ref{eq:CB2} in a way that does not refer to path matrices to get a more general proof.

\begin{proof}[Proof of Theorem \ref{relationgeneral}]

We continue using the language of the previous proof. It follows from Lemma \ref{splitting} (Region Splitting Lemma), that all tilings of $R_{z}(a+1/2)$, which has height $h$, must include exactly $h$ lozenges which cross the lattice line between $W$ and $E(a)$. Where $v$ indexes the possible positions of this set of lozenges, indexing along the border from bottom to top, we thus have
\begin{equation}\labelnote{eq:CBnoCB}
    M(R_{z}(a+1/2)) = \sum_{v \in \binom{2h}{h}} M(W_v)M(E_v(a))
\end{equation}
where $W_v$ is the subregion of $W$ omitting those triangles covered by lozenges in positions indexed by $v$, and $E_v(a)$ is the analogous subregion of $E(a)$. {In fact, $M(E_v(a))= \det(N^E_{v,[h]})$, and under the conditions of the previous proof $M(W_v) = \det(N^W_{[h],v})$ so that equation \ref{eq:CBnoCB} generalizes \ref{eq:CB}.} As before, $M(E_v(a))$ is the tiling function for a dented half-hexagon, and the rest of the proof follows as in the previous special case.

\end{proof}

We remark that Theorem \ref{relationgeneral} then holds for families of tubey regions where the core may be quite strange in shape and weighting scheme, though the tube should be unweighted except for the vertical lozenges along its eastern boundary which have weight 1/2.

\section{Final Remarks}

While writing this paper we solved some problems, not yet mentioned, with uninteresting answers. Seeking to generalize Theorem \ref{relationgeneral}, one might consider tubey regions with vertical lozenges along the eastern boundary having weight $q$, denoting these $\overline R_{z}(a;q)$ so that $\overline R_{z}(a) = \overline R_{z}(a;1/2)$. Defining $g(a;q)$ to be the tiling function for $\{\overline R_{z}(a+1/2;q): a \in \mathbb N\}$, one might then ask whether there is any relation
$$g(a+d;1) = g(a;q),$$
for $R$ general and $d$ independent of $a$, generalizing Theorem \ref{relationgeneral}. In fact, no such relation exists except for $q \in \{0, 1/2,1\}$, even when $R$ is restricted to vertically symmetric dented half-hexagons.

Moving forward we are interested in regions that can be constructed in a similar manner to tubey regions, in the sense that one can add (possibly weighted) layers to a region. 

\subsection*{Acknowledgements}

The author would like to express many thanks to M. Ciucu for his advice on this problem and this paper.

\begin{bibdiv}
\begin{biblist}

\bib{An78}{article}{
title={Plane partitions (1): The MacMahon conjecture},
author={Andrews, G.},
journal={Adv. in Math. Suppl. Stud.},
volume = {1},
pages={131-150},
date={1978}
}
\bib{Br}{book}{
title={A Comprehensive Introduction to Linear Algebra},
author={Broida, J. G.},
author={Williamson, S. G.},
publisher={Addison-Wesley Publishing Company},
address={Redwood City, California},
date={1989}
}

\bib{By19}{article}{
title={A short proof of two shuffling theorems for tilings and a weighted generalization},
author={Byun, S. H.},
status={accepted},
eprint={arXiv:1906.04533 [math.CO]},
journal = {Discrete Mathematics},
volume = {345},
number = {3},
date={2022}
}

\bib{Ci97}{article}{
title={Enumeration of Perfect Matchings in Graphs with Reflective Symmetry},
author={Ciucu, M.},
journal={J. Combin. Theory Ser. A},
date={1997},
Volume={77},
number={1},
pages={67-97}
}

\bib{Ci01}{article}{
title={Enumeration of Lozenge Tilings of Hexagons with a Central Triangular Hole},
author={Ciucu, M.},
author={Eisenk{\"o}bl, T.},
author={Krattenthaler, C.},
author={Zare, D.},
journal={J. Combin. Theory Ser. A},
date={2001},
volume={95},
pages={ 251-334}
}

\bib{Ci13}{article}{
author={Ciucu, M.},
author={Fischer, I.},
title={Proof of two conjectures of Ciucu and Krattenthaler on the enumeration of lozenge tilings of hexagons with cut off corners},
journal={J. Combin. Theory Ser. A}
volume = {133},
date={2013},
pages={228–250}
}

\bib{Ci14}{article}{
title={Proof of Blum's Conjecture on Hexagonal Dungeons},
author={Ciucu, M.},
author={Lai, T.},
journal={J. Combin. Theory Ser. A},
date={2014},
volume={125},
number={1},
pages={}
}

\bib{Ci96}{article}{
title={Lozenge tilings of hexagons with arbitrary dents},
author={Ciucu, M.},
author={Fischer, I.},
journal={Adv. Appl. Math.},
volume = {73 C},
date={2016},
pages={1-22}
}

\bib{Ci19a}{article}{
title={Tilings of hexagons with a removed triad of bowties},
author={Ciucu, M.},
author={Lai, T.},
author={Rohatgi, R.},
date={2021},
journal={J. Combin. Theory Ser. A},
Volume={178},
pages={105359}
}

\bib{Ci19b}{article}{
title={Lozenge tilings of doubly-intruded hexagons},
author={Ciucu, M.},
author={Lai, T.},
date={2019},
journal={J. Combin. Theory Ser. A},
Volume={167},
pages={294-339}
}

\bib{Ci21}{article}{
title={A New Solution for the Two Dimensional Dimer Problem},
author={Ciucu, M.},
status={submitted},
eprint={ arXiv:2102.07229v1 [math.CO]},
date={2021}
}

\bib{Co}{article}{
title={Lozenge Tiling Function Ratios for Hexagons with Dents on Two Sides},
author={Condon, D.},
date={2019},
journal={Electron. J. Comb.},
Volume={27},
number={3},
pages={3.60}
}

\bib{Ei}{article}{
title={Rhombus Tilings of a hexagon with three fixed border tiles},
author={Eisenk{\"o}lbl, T.},
date={1999},
journal={J. Combin. Theory Ser. A},
Volume={88},
pages={368-378}
}

\bib{Fu}{article}{
title = {Generating Functions of Lozenge Tilings for Hexagonal Regions via
Nonintersecting Lattice Paths},
author={Fulmek, M.},
journal = {Enumer. Combin. Appl.},
volume = {1},
number = {3},
date = {2021},
pages = {S2R24}
}

\bib{Ge85}{article}{
title={Binomial determinants, paths, and hook length formulae. },
author={Gessel, I.},
author={Viennot, X.},
journal={Adv. in Math.},
date={1985},
volume={58},
number={3},
pages={300-321}
}

\bib{Ge89}{article}{
title={Determinants, paths, and plane partitions },
author={Gessel, I.},
author={Viennot, X.},
date={1989},
status={preprint}
}

\bib{Gi}{article}{
title={Inverting the Kasteleyn matrix for holey hexagons},
author={Gilmore, T.},
status={submitted},
eprint={ arXiv:1701.07092v2 [math.CO]},
date={2017}
}

\bib{Go83}{article}{
title={A proof of the Bender-Knuth conjecture},
author={Gordon, B.},
journal={Pacific J. Math.},
date={1983},
volume={108},
pages={99-113}
}

\bib{Ka}{article}{
title={Coincidence probabilities},
author={Karlin, S.},
author={McGregor, J.},
journal={Pacific J. Math.},
Volume={9},
date={1959},
pages={1141-1164}
}

\bib{Ku04}{article}{
title={Applications of Graphical Condensation for Enumerating Matchings and Tilings},
author={Kuo, E.},
journal={Theoret. Comput. Sci.},
date={2004},
Volume={319},
number={1-3},
pages={29-57}}

\bib{La19a}{article}{
title={A shuffling theorem for reflectively symmetric tilings},
author={Lai, T.},
journal = {Discrete Mathematics},
date={2021},
volume={344},
number={7},
pages={112390}
}

\bib{La19b}{article}{
title={A shuffling theorem for centrally symmetric tilings},
author={Lai, T.},
status={submitted},
eprint={arXiv:1906.03759 [math.CO]},
date={2019}
}

\bib{La19c}{article}{
title={A shuffling theorem for lozenge tilings of doubly-dented hexagons},
author={Lai, T.},
author={Rohatgi, R.},
status={submitted},
eprint={arXiv:1905.08311 [math.CO]},
date={2019}
}

\bib{La21}{article}{
title={Ratio of Tiling Generating Functions of semi-Hexagons and Quartered Hexagons
with Dents},
author = {Lai, T.},
journal = {Enumer. Combin. Appl.},
volume = {2},
number = {1},
pages={S2R5}
}

\bib{Li}{article}{
title={On  the  vector  representations  of  induced  matroids},
author={Lindstr{\"o}m, B.},
journal={Bull. Lond. Math. Soc.},
volume={5},
date={1973},
pages={85-90}
}

\bib{MD}{book}{
title = {Symmetric Functions and Hall Polynomials},
publisher = {Oxford Univ. Press, London/New York},
author = {Macdonald, I.},
date = {1979},
pages={52}
}

\bib{Ma}{book}{
title={Combinatory Analysis},
volume={2},
author={MacMahon, P. A.},
publisher={Cambridge University Press},
date={1916},
pages={12},
reprint={
title={Combinatory Analysis},
volume={1-2},
author={MacMahon, P. A.},
publisher={Chelsea, New York},
date={1960}
}
}

\bib{Pr84}{article}{
author = {Proctor, R.},
title = {Bruhat lattices, plane partition generating functions, and miniscule representations},
journal = {European J. Combin.},
volume =  {5},
date = {1984},
pages = {331-350}
}

\bib{Pr88}{article}{
author = {Proctor, R.},
title = {Odd symplectic groups},
journal = {Inventiones Mathematicae},
volume =  {92},
date = {1988},
pages = {307-332}
}

\bib{TeFi}{article}{
title={Dimer problem in statistical mechanics - an exact result},
journal={Philos. Mag. (8)},
volume={6},
author={Temperley, H. N. V.},
author={Fisher, M. E.},
date={1961},
pages={1061-1063}
}

\bib{Ya}{article}{
title={Graphical condensation of plane graphs: A combinatorial approach},
journal={Theoret. Comput. Sci.},
volume={349},
author={W.G. Yan},
author={Y.-N. Yeh},
author={F.J.Zhang},
date={2005},
pages={452-461}
}

\end{biblist}
\end{bibdiv}

\end{document}